\definecolor{rouge}{rgb}{0.7,0.00,0.00}
\definecolor{vert}{rgb}{0.00,0.5,0.00}
\definecolor{bleu}{rgb}{0.00,0.00,0.8}
\newtheorem{theorem}{Theorem}[section]
\newtheorem*{theorem*}{Theorem}
\newtheorem{lemma}[theorem]{Lemma}
\newtheorem{proposition}[theorem]{Proposition}
\newtheorem{condition}{Condition}
\newtheorem{conditionA}{A\kern-0.1mm}
\theoremstyle{definition}
\newtheorem{remark}[theorem]{Remark}
\def \eref#1{\hbox{(\ref{#1})}}
\numberwithin{equation}{section}
\def\geq{\geqslant}
\def\leq{\leqslant}
\def\RR{\mathbb{R}}
\def\PP{\mathbb{P}}
\def\EE{\mathbb{E}}
\def\vare{{\varepsilon}}
\def \eref#1{\hbox{(\ref{#1})}}
\def\EE{\mathbb{ E}}
\begin{document}

\title[Optimal convergence rates for slow-fast SPDEs ]
{Optimal convergence rates in the averaging principle for slow-fast SPDEs driven by multiplicative noise}

\author{Yi Ge}
\curraddr[Ge,Y.]{ School of Mathematics and Statistics and Research Institute of Mathematical Science, Jiangsu Normal University, Xuzhou, 221116, China}
\email{yge@jsnu.edu.cn}

\author{Xiaobin Sun}
\curraddr[Sun, X.]{ School of Mathematics and Statistics and Research Institute of Mathematical Science, Jiangsu Normal University, Xuzhou, 221116, China}
\email{xbsun@jsnu.edu.cn}

\author{Yingchao Xie}
\curraddr[Xie, Y.]{ School of Mathematics and Statistics and Research Institute of Mathematical Science, Jiangsu Normal University, Xuzhou, 221116, China}
\email{ycxie@jsnu.edu.cn}

\begin{abstract}
In this paper, we study a class of slow-fast stochastic partial differential equations
with multiplicative Wiener noise. Under some appropriate conditions, we prove the slow component converges to the solution of the corresponding averaged equation with optimal orders 1/2 and 1 in the strong and weak sense respectively. The main technique is based on the Poisson equation.
\end{abstract}

\date{\today}
\subjclass[2010]{ Primary 35R60}
\keywords{Stochastic partial differential equations; Averaging principle; Slow-fast; Poisson equation; Strong and weak convergence rates; Multiplicative noise.}

\maketitle

\section{Introduction}

\vspace{0.1cm}
In this paper, we study
the following slow-fast stochastic system on a Hilbert space $H$:
\begin{equation}\left\{\begin{array}{l}\label{main equation}
\displaystyle
dX^{\vare}_t=\left[AX^{\vare}_t+F_1(X^{\vare}_t, Y^{\vare}_t)\right]dt+G_1(X^{\vare}_t)dW_t^1,\quad X^{\vare}_0=x\in H,\\
dY^{\vare}_t=\frac{1}{\vare}[AY^{\vare}_t+F_2(X^{\vare}_t, Y^{\vare}_t)]dt+\frac{1}{\sqrt{\varepsilon}}G_2(X^{\vare}_t, Y^{\vare}_t)dW_t^2,\quad Y^{\vare}_0=y\in H,\end{array}\right.
\end{equation}
where $\varepsilon >0$ is a small parameter describing the ratio of time scales between the slow component $X^{\varepsilon}$ and the fast component $Y^{\varepsilon}$, $A$ is a selfadjoint operator, $F_1,F_2, G_1, G_2$ are measurable functions satisfying some appropriate conditions, and $\{W_t^1\}_{t\geq 0}$ and $\{W_t^2\}_{t\geq 0}$ are independent  cylindrical Wiener processes, defined on a complete filtered probability space
$(\Omega,\mathscr{F},\{\mathscr{F}_{t}\}_{t\geq0},\mathbb{P})$.

\vspace{1mm}
The averaging principle for the stochastic system \eref{main equation}
describes the asymptotic behavior of the slow component as the scale parameter $\vare\to 0$, i.e.,
\begin{eqnarray}
X^{\vare} \rightarrow \bar{X} ,\quad \text{as} \quad \vare\rightarrow 0,\label{Convergence}
\end{eqnarray}
in various convergence ways, where $\bar{X}$ is the solution of the corresponding averaged equation. Many people have studied the mode of convergence and the convergence rates of
the stochastic system \eref{main equation}.
The classical method is based on Khasminskii's time discretization, inspired by \cite{K1}.
For instance, Cerrai  \cite{C1} proved \eref{Convergence} holds with convergence in probability;
Fu and Liu \cite{FL} proved \eref{Convergence} holds in the strong sense (see \eqref{ST2} below);
In the case that $G_1(x)\equiv 0$, Br\'{e}hier \cite{B1} proved \eref{Convergence} holds in the strong sense (see \eqref{ST} below) with order $1/2-r$ for any $r\in (0,1/2)$. For further developments, see e.g., \cite{C2,CL,GP3,GP4,PXY,SXX1,WR,XML1}.

\vspace{1mm}
The method based on asymptotic expansion of solutions of Kolmogorov equations in the parameter $\vare$ is  also often used to study the weak averaging principle rate (see \eqref{OWC} below).
For instance, Br\'{e}hier \cite{B1} and Fu et al. \cite{FWLL2018} proved \eref{Convergence} holds in the weak sense with order $1-r$ for any $r\in (0,1)$. For further developments, see e.g., \cite{DSXZ, FWLL,ZFWL} .

\vspace{1mm}
Studying the convergence rate is an interesting and important topic in multiscale systems. For example, the convergence rate can be used to construct the efficient numerical schemes (see e.g., \cite{B2,B3}). It is well known that the optimal strong and weak convergence orders of stochastic system \eref{main equation} should be $1/2$ and $1$ respectively. However, these optimal convergence rates have not
been established in the  papers mentioned  above.

\vspace{0.1cm}
Recently the Poisson equation technique has been successfully used to study the optimal strong and weak convergence rates for the stochastic systems \eref{main equation}.
For instance in the case that the fast component does not depend on the slow component, Br\'{e}hier \cite{B3} proved the strong and weak averaging principle with order $1/2$ and $1$ respectively.
In the fully dependent case with additive noise, R\"{o}ckner et al. \cite{RXY} proved the strong averaging principle holds with order $1/2$. In the fully dependent case  with multiplicative noise, Xie and Yang \cite{XY} proved the weak averaging principle holds with order $1$. However, the references \cite{B3} and \cite{XY} considered the stochastic reaction-diffusion equation and the coefficients are Nemytskii operators with $A$ is the Laplace operator and $H:= L^2(0,1)^d$, these settings are crucial in the $L^p$ theory for the Nemytskii operators (see \cite{BD}).

\vspace{0.1cm}
The purpose of this paper is to prove the optimal strong and weak convergence orders of stochastic system \eref{main equation} in a general model. More precisely, under some proper assumptions, for any initial value $(x,y)\in H^{\eta}\times H$ with $\eta\in (0,1)$, $T>0$, $p>1$ and small enough $\vare>0$
\begin{eqnarray*}
\sup_{t\in [0,T]}\mathbb{E} |X_{t}^{\vare}-\bar{X}_{t}|^{p}\leq C\vare^{p/2}.
\end{eqnarray*}
and for some nice test function $\phi$,
\begin{eqnarray*}
|\mathbb{E}\phi(X^{\vare}_t)-\EE\phi(\bar{X}_t)|\leq C\vare,
\end{eqnarray*}
where $C$ is a constant depending on $T, \|x\|_{\eta}, |y|, p$ and $\bar{X}$ is the solution of the corresponding averaged equation  (see equation \eref{1.3} below).

Meanwhile under some additional assumptions, we also prove another a stronger result in the strong convergence, i.e.,  for any initial value $(x,y)\in H^{\eta}\times H$ with $\eta\in (0,1)$, $T>0$, $p>1$ and small enough $\vare>0$,
\begin{eqnarray*}
\mathbb{E} \left(\sup_{t\in [0,T]} |X_{t}^{\vare}-\bar{X}_{t}|^{p} \right)\leq C\vare^{p/2}.
\end{eqnarray*}
The above result implies that the supremum in the strong convergence can be taken inside of the expectation, which is new and also give a positive answer to a open question in \cite[Remark 4.9]{B3}.
Our method is based on a careful estimation of the solution of the Poisson equation. We believe that the technique used in this paper is also useful for tackling many other SPDEs.

\vspace{1mm}
The structure of the paper is as follows. In Section 2, we give some notation and our main results. In Section 3, we give some
a priori estimates of the finite dimensional frozen equation and study the corresponding Poisson equation. We prove the strong and weak averaging principle in Sections 4 and  5 respectively.
In the appendix, we give some a priori estimates  of the solutions and the Galerkin approximation.

\vspace{0.1cm}
In this paper, $C$, $C_p$, $C_{T}$ and $C_{p,T}$ stand for positive constants which may change from line to line. $C_p$ means that the constant depends on $p$.
The meanings of $C_{T}$ and $C_{p,T}$ are similar.

\section{Notations and main results} \label{Sec Main Result}

\subsection{Notation and assumptions}

We introduce some notation used throughout this paper.  $H$ is Hilbert space with inner product $\langle\cdot,\cdot\rangle$ and norm $|\cdot|$.
$\mathbb{N}_{+}$ stands for the collection of all the positive integers.

$\mathcal{B}(H)$ denotes the collection of all measurable functions $\varphi(x): H\rightarrow \RR$.
For any $k\in \mathbb{N}_{+}$,
\begin{align*}
C^k(H):=&\{\varphi\in \mathcal{B}(H): \varphi \mbox{ and all its Fr\'{e}chet derivatives up to order } k \mbox{ are continuous}\},\\
C^k_b(H):=&\{\varphi\in C^k(H): \mbox{ for } 1\le i\le k, \mbox{ all $i$-th Fr\'{e}chet derivatives of } \varphi \mbox{ are bounded} \}.
\end{align*}
For any $\varphi\in C^3(H)$, by the Riesz representation theorem, we often identify the first Fr\'{e}chet derivative $D\varphi(x)\in \mathcal{L}(H, \RR)\cong H$, the second derivative $D^2\varphi(x)$ as a linear operator in $\mathcal{L}(H,H)$ and the third derivative $D^3\varphi(x)$ as a linear operator in $\mathcal{L}(H,\mathcal{L}(H,H))$, i.e.,
\begin{eqnarray*}
&&D\varphi(x) \cdot h = \langle D\varphi(x), h \rangle,\quad h\in H,\\
&&D^2\varphi(x) \cdot (h,k) = \langle D^2\varphi(x)\cdot h, k \rangle,  \quad  h,k \in H,\\
&&D^3\varphi(x) \cdot (h,k,l)= [D^3\varphi(x)\cdot h]\cdot (k,l),\quad h,k,l\in H.
\end{eqnarray*}

%

Let $\mathcal{L}_2(H,H)$ be the space of all linear operators $S:H\to H$ such that
$S$ is a Hilbert-Schmidt operator on $H$.
The norm on $\mathcal{L}_{2}(H,H)$ is defined by
$$|S|^2_{HS}=\text{Tr}(SS^*):=\sum_{k\geq1}|S e_k|^2,$$
where $S^*$ is the adjoint operator of $S$.

$A$ is a selfadjoint operator which satisfies $Ae_n=-\lambda_n e_n$ with $\lambda_n>0$ and $\lambda_n\uparrow \infty$, as $n\uparrow \infty$, where $\{e_n\}_{n\geq1}\subset \mathscr{D}(A)$ is a complete orthonormal basis of $H$. Then for any $s\in\RR$, we define
 $$H^s:=\mathscr{D}((-A)^{s/2}):=\left\{u=\sum_{k\in\mathbb{N}_{+}}u_ke_k: u_k\in \mathbb{R},~\sum_{k\in\mathbb{N}_{+}}\lambda_k^{s}u_k^2<\infty\right\}$$
and
 $$(-A)^{s/2}u:=\sum_{k\in \mathbb{N}_{+}}\lambda_k^{s/2} u_ke_k,~~u\in\mathscr{D}((-A)^{s/2}),$$
with the associated norm $\|u\|_{s}:=|(-A)^{s/2}u|=\left(\sum_{k\in\mathbb{N}_{+}}\lambda_k^{s} u^2_k\right)^{1/2}$. It is easy to see $\|\cdot\|_0=|\cdot|$.

The following smoothing properties of the semigroup $e^{tA}$ (see \cite[Proposition 2.4]{B1}) will be used quite often later in this paper:
\begin{eqnarray}
&&|e^{tA}x|\leq e^{-\lambda_1 t}|x|, \quad x\in H, t\geq 0;\label{SP1}\\
&&\|e^{tA}x\|_{\sigma_2}\leq C_{\sigma_1,\sigma_2}t^{-\frac{\sigma_2-\sigma_1}{2}}e^{-\frac{\lambda_1 t}{2}}\|x\|_{\sigma_1},\quad x\in H^{\sigma_2},\sigma_1\leq\sigma_2, t>0;\label{P3}\\
&&|e^{tA}x-x|\leq C_{\sigma}t^{\frac{\sigma}{2}}\|x\|_{\sigma},\quad x\in H^{\sigma},\sigma>0, t\geq 0.\label{P4}
\end{eqnarray}

Let $\{W_t^1\}_{t\geq 0}$ and $\{W_t^2\}_{t\geq 0}$ be independent cylindrical-Wiener process, i.e,
$$
W_t^1=\sum_{k\in \mathbb{N}_{+}}W^{1,k}_{t}e_k,\quad W_t^2=\sum_{k\in \mathbb{N}_{+}} W^{2,k}_{t}e_k,\quad t\geq 0,
$$
where $\{W^{i,k}\}_{k\in \mathbb{N}_{+}}$ is a sequence of independent one dimensional standard Brownian motions
on the probability space $(\Omega,\mathscr{F},\{\mathscr{F}_{t}\}_{t\geq0},\mathbb{P})$, for $i=1,2$. We also assume that $W^{1,k}$ and $W^{2,k}$ are independent.
\vspace{0.2cm}

Now, we assume the following conditions  on the coefficients $F_1, F_2: H\times H \rightarrow H$, $G_1: H \rightarrow \mathcal L_{2}(H; H)$, $G_2: H\times H \rightarrow \mathcal L_{2}(H; H)$ throughout the paper:
\begin{conditionA}\label{A1}
$F_1,F_2,G_1,G_2$ are Lipschitz continuous, i.e., there exist positive constants $L_{F_2}, L_{G_2}$ and $C$ such that for any $x_1,x_2,y_1,y_2\in H$,
\begin{eqnarray*}
&&\left|F_1(x_1,y_1)-F_1(x_2,y_2)\right|+\left|G_1(x_1)-G_1(x_2)\right|_{HS}\leq C(|x_1-x_2|+|y_1-y_2|),\\
&&\left|F_2(x_1,y_2)-F_2(x_2,y_2)\right|\leq C|x_1-x_2|+L_{F_2}|y_1-y_2|,\\
&&\left|G_2(x_1, y_1)-G_2(x_2, y_2)\right|_{HS}  \leq C|x_1-x_2| + L_{G_2}|y_1-y_2|.
\end{eqnarray*}
\end{conditionA}

\begin{conditionA}\label{A2}
Assume that
 \begin{align}
\theta:=2\lambda_{1}-2L_{F_2}-L^2_{G_2}>0.\label{SDC}
\end{align}
Moreover, there exist two constant $\gamma, \zeta\in(0,1)$, such that for any $x, y\in H$,
\begin{align}
&|(-A)^{1-\frac{\gamma}{2}}G_1(x)|_{HS}\leq C(1+|x|),\label{BOG1}\\
&|G_2(x, y)|_{HS}\leq C(1+|x| + |y|^{\zeta}). \label{BOG2}
\end{align}\end{conditionA}

\begin{conditionA}\label{A3}
Assume that there exists $\delta\in (0,1)$ such that the following directional derivatives are well-defined and satisfy:
\begin{eqnarray}
&&\!\!\!\!\!\!\!\!\!\!\!\!\!\!\!\!\!|D_{x}F(x,y)\cdot h|\leq C |h| \quad \text{and} \quad |D_{y}F(x,y)\cdot h|\leq C |h|,\quad \forall x,y, h\in H ;  \label{FirstDer F}  \\
&&\!\!\!\!\!\!\!\!\!\!\!\!\!\!\!\!\!|D_{xx}F(x,y)\cdot(h,k)|\leq C |h|\|k\|_{\delta},\quad \forall x,y, h\in H, k\in H^{\delta} ;  \label{SecondDer F1}  \\
&&\!\!\!\!\!\!\!\!\!\!\!\!\!\!\!\!\!|D_{yy}F(x,y)\cdot(h,k)|\leq C |h|\|k\|_{\delta},\quad \forall x,y, h\in H, k\in H^{\delta} ;  \label{SecondDer F2}  \\
&&\!\!\!\!\!\!\!\!\!\!\!\!\!\!\!\!\! |D_{xy}F(x,y)\cdot (h,k)|\leq C |h|\|k\|_{\delta}, \quad \forall x,y, h\in H, k\in H^{\delta} ;\label{SecondDer F3}  \\
&&\!\!\!\!\!\!\!\!\!\!\!\!\!\!\!\!\! |D_{xy}F(x,y)\cdot (h,k)|\leq C \|h\|_{\delta}|k|, \quad \forall x,y, k\in H, h\in H^{\delta} ;\label{SecondDer F4}  \\
&&\!\!\!\!\!\!\!\!\!\!\!\!\!\!\!\!\!|D_{xyy}F(x,y)\cdot(h,k,l)|\leq C|h|\|k\|_{\delta}\|l\|_{\delta}, \quad \forall x,y, h\in H, k,l\in H^{\delta} ;  \label{ThirdDer F1}\\
&&\!\!\!\!\!\!\!\!\!\!\!\!\!\!\!\!\!|D_{yyy}F(x,y)\cdot(h,k,l)|\leq C|h|\|k\|_{\delta}\|l\|_{\delta}, \quad \forall x,y, h\in H, k,l\in H^{\delta} ;  \label{ThirdDer F2}\\
&&\!\!\!\!\!\!\!\!\!\!\!\!\!\!\!\!\!|D_{xxy}F(x,y)\cdot(h,k,l)|\leq C|h|\|k\|_{\delta}\|l\|_{\delta}, \quad \forall x,y, h\in H, k,l\in H^{\delta} . \label{ThirdDer F3}
\end{eqnarray}
Here $F$ takes $F_1,F_2,G_1$ and $G_2$. $D_{x}F(x,y)\cdot h$ is the directional derivative of $F(x,y)$ in the direction $h$ with respective to $x$, other notations can be interpreted similarly. Note that if $F$ takes $G_1$, $G_1(x)$ only depends on $x$ and the norm takes $|\cdot|_{HS}$. If $F$ takes $G_2$, the norm takes $|\cdot|_{HS}$.
\end{conditionA}

\begin{remark} For any given $\varepsilon>0$ and initial value $(x,y)\in H\times H$,  the assumption \ref{A1} ensures the system \eref{main equation} admits a unique strong solution $(X^{\vare}_t, Y^{\vare}_t)\in H\times H$, which has the following expression, $\PP$-a.s.,
\begin{equation}\left\{\begin{array}{l}\label{A mild solution}
\displaystyle
X^{\varepsilon}_t=e^{tA}x+\int^t_0e^{(t-s)A}F_1(X^{\varepsilon}_s, Y^{\varepsilon}_s)ds+\int^t_0 e^{(t-s)A}G_1(X^{\varepsilon}_s)dW_s^1,\\
Y^{\varepsilon}_t=e^{tA/\varepsilon}y+\frac{1}{\varepsilon}\int^t_0e^{(t-s)A/\varepsilon}F_2(X^{\varepsilon}_s,Y^{\varepsilon}_s)ds
+\frac{1}{\sqrt{\varepsilon}}\int^t_0 e^{(t-s)A/\varepsilon}G_2(X^{\varepsilon}_s, Y^{\varepsilon}_s)dW_s^2.
\end{array}\right.
\end{equation}
\end{remark}

\begin{remark}
We give some comments on the assumption \ref{A2}:\\
 \begin{itemize}
 \item{ Condition \eref{SDC} is the classical strong dissipative condition, which is used to prove the existence and uniqueness of the invariant measure
and the exponential ergodicity of the transition semigroup of the frozen equation.}
\item { Condition \eref{BOG1}
is used to prove stochastic convolutions $\int^t_0 e^{(t-s)A}G_1(X^{\varepsilon}_s)dW_s^1$ has finite $p$-order moment in $\|\cdot\|_2$ for any $p\geq 2$, i.e.,
$$
\EE\left\|\int^t_0 e^{(t-s)A}G_1(X^{\varepsilon}_s)dW_s^1\right\|^p_2<\infty.
$$
Especially, if $G_1(x)\equiv G_1$, where $G_1$  satisfies $G_1 e_k=\sqrt{\alpha_{1k}}e_k$ with $\alpha_{1k}\geq 0$, then condition \eref{BOG1} can be replaced by condition $\sum^{\infty}_{k=1}\lambda_k\alpha_{1k}<\infty$.}
\item{ Condition \eref{BOG2}
is used to prove the solution has finite $p$-order moment for any $p\geq 2$. Thus the strong averaging principle (see \eref{ST} below) holds for any $p\geq 2$. However if we only want to prove the strong averaging principle holds for some fixed $p\geq 2$, then it is sufficient to show the solution has finite $3p$-th order moment.
Similarly, the weak averaging principle holds  (see \eref{OWC} below)  only needs the solution has finite $3$-th order moment.
Consequently, to prove the strong convergence \eref{ST} for fixed $p\geq 2$, one can remove condition  \eref{BOG2} and replace condition \eref{SDC} by $2\lambda_{1}-2L_{F_2}-(3p-1)L^2_{G_2}>0$; and to prove the weak convergence \eref{OWC}, one can  remove condition \eref{BOG2} and replace condition \eref{SDC} by $\lambda_{1}-L_{F_2}-L^2_{G_2}>0$. Especially, if $G_2(x,y)=\sqrt{Q_2}$ with $Q_2$ being a trace operator, i.e., $\text{Tr}Q_2<\infty$, \eref{BOG2} holds.}
 \end{itemize}
\end{remark}

\begin{remark} \label{EA3}
Here we give a example that the conditions in assumption \ref{A3} hold firstly.
Let $H:=\{g\in L^2(0,1): g(0)=g(1)=0\}$ and let $A:=\partial^2_{\xi}$ be the Laplacian operator.
$F$ is defined to be the Nemytskii operator associated with a function $f:\RR\times \RR\rightarrow \RR$, i.e., $F(x,y)(\xi):=f(x(\xi),y(\xi))$. Then the following directional derivatives are well-defined and belong to $H$,
\begin{eqnarray*}
&&\!\!\!\!\!\!\!\!\!\!\!\!\!\!\!\!\!D_{x}F(x,y)\cdot h=\partial_x f(x,y)h \quad \text{and} \quad D_{x}F(x,y)\cdot h=\partial_y f(x,y)h ,\quad \forall x,y, h\in H ;  \label{EFirstDer F}  \\
&&\!\!\!\!\!\!\!\!\!\!\!\!\!\!\!\!\!D_{xx}F(x,y)\cdot(h,k)=\partial_{xx}f(x,y)hk\in H,\quad \forall x,y, h, k\in L^{\infty}(0,1) ;  \label{ESecondDer F1}  \\
&&\!\!\!\!\!\!\!\!\!\!\!\!\!\!\!\!\!D_{yy}F(x,y)\cdot(h,k)=\partial_{yy}f(x,y)hk,\quad \forall x,y, h\in H, k\in L^{\infty}(0,1) ;  \label{ESecondDer F2}  \\
&&\!\!\!\!\!\!\!\!\!\!\!\!\!\!\!\!\! D_{xy}F(x,y)\cdot (h,k)=\partial_{xy}f(x,y)hk, \quad \forall x,y, h\in H, k\in L^{\infty}(0,1) ;\label{ESecondDer F3}  \\
&&\!\!\!\!\!\!\!\!\!\!\!\!\!\!\!\!\!D_{xyy}F(x,y)\cdot(h,k,l)=\partial_{xyy}f(x,y)hkl, \quad \forall x,y, h\in H, k,l\in L^{\infty}(0,1) ;  \label{EThirdDer F1}\\
&&\!\!\!\!\!\!\!\!\!\!\!\!\!\!\!\!\!D_{yyy}F(x,y)\cdot(h,k,l)=\partial_{yyy}f(x,y)hkl, \quad \forall x,y, h\in H, k,l\in L^{\infty}(0,1) ;  \label{EThirdDer F2}\\
&&\!\!\!\!\!\!\!\!\!\!\!\!\!\!\!\!\!D_{xxy}F(x,y)\cdot(h,k,l)=\partial_{xxy}f(x,y)hkl, \quad \forall x,y, h\in H, k,l\in L^{\infty}(0,1), \label{EThirdDer F3}
\end{eqnarray*}
where we assume that all the partial derivatives of $f$ appear above are all uniformly bounded. Note that $H^{\delta}\subset L^{\infty}(0,1)$ for any $\delta>1/2$, thus it is easy to see that $F$ satisfy assumption \ref{A3} with any $\delta\in (1/2,1)$. Secondly, if the noises in slow and fast components are  additive, then the existing constant $\delta\in (0,1)$ can be extend to $\delta\in (0,2)$, which covers more examples.
\end{remark}

\subsection{Main results}

Let  $\mu^{x}$ be the unique invariant measure of the transition semigroup of the frozen equation
\begin{eqnarray}\label{FZE}
\left\{ \begin{aligned}
&dY_{t}=\left[AY_{t}+F_2(x,Y_{t})\right]dt+G_2(x,Y_{t})d W_{t}^2,\\
&Y_{0}=y\in H,
\end{aligned} \right.
\end{eqnarray}
and define $\bar{F}_1(x):=\int_{H}F_1(x,y)\mu^{x}(dy)$. Let $\bar{X}$ is the solution of the corresponding averaged equation:
\begin{equation}\left\{\begin{array}{l}
\displaystyle d\bar{X}_{t}=\left[A\bar{X}_{t}+\bar{F_1}(\bar{X}_{t})\right]dt+G_1(\bar{X}_{t})d W_{t}^1,\\
\bar{X}_{0}=x\in H.\end{array}\right. \label{1.3}
\end{equation}
Here is our first main result.

\begin{theorem} (\textbf{Strong convergence 1})\label{main result 1}
Suppose  that \ref{A1}-\ref{A3} hold. Then for any initial value $(x,y)\in H^{\eta}\times H$ with $\eta\in (0,1)$, $T>0$, $p>1$ and small enough $\vare>0$, there exists a positive constant $C_{p,T}$ depending only on $p$ and $T$ such that
\begin{equation}\label{ST}
\sup_{t\in [0,T]}\mathbb{E} |X_{t}^{\vare}-\bar{X}_{t}|^{p}\leq C_{p,T}(1+\|x\|^{3p}_{\eta}+|y|^{3p})\vare^{p/2}.
\end{equation}
\end{theorem}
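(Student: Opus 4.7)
The plan is to apply the Poisson-equation technique. Let $\Phi(x,y)$ denote the solution of
\begin{equation*}
\mathcal{L}_2^{x}\Phi(x,y)=-\bigl[F_1(x,y)-\bar F_1(x)\bigr],
\end{equation*}
where $\mathcal{L}_2^{x}$ is the infinitesimal generator of the frozen equation \eref{FZE}; Section 3 will provide quantitative estimates on $\Phi$ and its derivatives $D_x\Phi$, $D_y\Phi$, $D^2_{xx}\Phi$, $D^2_{yy}\Phi$, $D^2_{xy}\Phi$ under \ref{A1}--\ref{A3}. Writing the error in mild form gives $X^{\vare}_t-\bar X_t=I_1^{\vare}(t)+I_2^{\vare}(t)+I_3^{\vare}(t)$, where
\begin{align*}
I_1^{\vare}(t) &=\int_0^t e^{(t-s)A}\bigl[F_1(X^{\vare}_s,Y^{\vare}_s)-\bar F_1(X^{\vare}_s)\bigr]ds,\\
I_2^{\vare}(t) &=\int_0^t e^{(t-s)A}\bigl[\bar F_1(X^{\vare}_s)-\bar F_1(\bar X_s)\bigr]ds,\\
I_3^{\vare}(t) &=\int_0^t e^{(t-s)A}\bigl[G_1(X^{\vare}_s)-G_1(\bar X_s)\bigr]dW_s^1.
\end{align*}
The Lipschitz continuity that $\bar F_1$ inherits from $F_1$, the contraction \eref{SP1}, and the BDG inequality together bound $\EE|I_2^{\vare}(t)|^p+\EE|I_3^{\vare}(t)|^p$ by $C\int_0^t\EE|X^{\vare}_s-\bar X_s|^p\,ds$, which Gronwall absorbs at the end. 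The heart of the proof is thus to show $\sup_{t\in[0,T]}\EE|I_1^{\vare}(t)|^p\leq C\vare^{p/2}$.

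For this, I apply It\^o's formula to $\Phi(X^{\vare}_s,Y^{\vare}_s)$ (justified via the Galerkin approximation of the appendix). Using the Poisson identity to replace $\mathcal{L}_2\Phi$ and rearranging yields
\begin{equation*}
\bigl[F_1-\bar F_1\bigr](X^{\vare}_s,Y^{\vare}_s)\,ds=-\vare\,d\Phi(X^{\vare}_s,Y^{\vare}_s)+\vare\,\mathcal{L}_1\Phi\,ds+\vare\,dM^1_s+\sqrt{\vare}\,dM^2_s,
\end{equation*}
where $dM^1_s=\langle D_x\Phi,G_1(X^{\vare}_s)dW^1_s\rangle$ and $dM^2_s=\langle D_y\Phi,G_2(X^{\vare}_s,Y^{\vare}_s)dW^2_s\rangle$. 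Substituting into $I_1^{\vare}$ and performing the stochastic integration by parts
\begin{equation*}
\int_0^t e^{(t-s)A}d\Phi(X^{\vare}_s,Y^{\vare}_s)=\Phi(X^{\vare}_t,Y^{\vare}_t)-e^{tA}\Phi(x,y)+\int_0^t Ae^{(t-s)A}\Phi(X^{\vare}_s,Y^{\vare}_s)\,ds,
\end{equation*}
decomposes $I_1^{\vare}(t)$ into a boundary piece, a drift piece involving $Ae^{(t-s)A}\Phi$ and $e^{(t-s)A}\mathcal{L}_1\Phi$, and two stochastic integrals. All but the $W^2$-stochastic integral carry the prefactor $\vare$ and, after BDG and moment bounds, contribute $O(\vare^p)$; the $W^2$-stochastic integral carries only $\sqrt{\vare}$ and, by BDG again, produces exactly the optimal $O(\vare^{p/2})$ contribution.

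The principal obstacle is the singular convolution $\int_0^t Ae^{(t-s)A}\Phi(X^{\vare}_s,Y^{\vare}_s)\,ds$, since $\|Ae^{\tau A}\|_{H\to H}\sim\tau^{-1}$ is not integrable at $\tau=0$. This is precisely where the initial regularity $x\in H^{\eta}$ is used: the spatial regularity of $F_1,F_2,G_2$ inherited by $\Phi$ in $x$ gives $\Phi(X^{\vare}_s,Y^{\vare}_s)\in H^{\delta}$ with polynomial dependence on $\|X^{\vare}_s\|_{\eta}$ and $|Y^{\vare}_s|$, so that \eref{P3} turns $|Ae^{(t-s)A}\Phi|$ into $C(t-s)^{-1+\delta/2}\|\Phi\|_{\delta}$, an integrable kernel on $[0,t]$. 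The analogous pairing $\langle AX^{\vare}_s,D_x\Phi\rangle=-\langle(-A)^{\eta/2}X^{\vare}_s,(-A)^{1-\eta/2}D_x\Phi\rangle$ handles the unbounded-drift part of $\mathcal{L}_1\Phi$ via the second-order estimates on $D_x\Phi$ from Section~3. Finally, taking expectations, invoking the a priori bounds $\sup_{s\in[0,T]}\EE\|X^{\vare}_s\|_{\eta}^{q}+\sup_{s\in[0,T]}\EE|Y^{\vare}_s|^{q}\leq C_{q,T}(1+\|x\|_{\eta}^{q}+|y|^{q})$ from the appendix (whose cubic appearance in the combined estimates of $\mathcal{L}_1\Phi$, $G_1D_x\Phi$ and the second-order derivatives of $\Phi$ accounts for the $3p$-power in the final constant), and applying Gronwall's inequality to
\begin{equation*}
\EE|X^{\vare}_t-\bar X_t|^p\leq C_{p,T}(1+\|x\|_{\eta}^{3p}+|y|^{3p})\vare^{p/2}+C\int_0^t\EE|X^{\vare}_s-\bar X_s|^p\,ds
\end{equation*}
yields the claimed optimal strong rate.
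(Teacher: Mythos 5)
Your overall architecture (Galerkin reduction, the decomposition into $I_1,I_2,I_3$, Lipschitz continuity of $\bar F_1$ plus Gronwall, the Poisson equation, and the It\^o/integration-by-parts identity in which the $W^2$-martingale carries the factor $\sqrt{\vare}$ and produces the optimal rate) matches the paper's proof. The gap lies in your treatment of the two singular terms: the convolution $\int_0^t Ae^{(t-s)A}\Phi\,ds$ and the drift pairing $D_x\Phi\cdot AX^{\vare}_s$ inside $\mathscr{L}_1\Phi$. You propose to tame the first by asserting that $\Phi(X^{\vare}_s,Y^{\vare}_s)$ takes values in $H^{\delta}$, and the second by moving $(-A)^{1-\eta/2}$ onto $D_x\Phi$. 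Neither regularity statement follows from \ref{A1}--\ref{A3}: assumption \ref{A3} only controls directional derivatives of $F_1$ with the \emph{directions} measured in $H^{\delta}$; it does not assert that $F_1$ (hence $\Phi(x,y)=\int_0^\infty[\EE F_1(x,Y^{x,y}_t)-\bar F_1(x)]\,dt$) takes values in $H^{\delta}$. The paper obtains $\sup_m\|\Phi_m(x,y)\|_{\alpha}\leq C(1+|x|^2+|y|^2)$ (estimate \eref{E4}) only under the additional hypothesis \eref{BH}, which belongs to \ref{A4}/\ref{A5} and is not assumed in Theorem \ref{main result 1}; for Nemytskii-type $F_1$ as in Remark \ref{EA3}, $F_1(x,y)$ genuinely lives only in $H$. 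So, as written, these two steps fail under the stated hypotheses.

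The paper's actual mechanism needs only that $\Phi_m$ is Lipschitz with linear growth. For the convolution it writes
$\int_0^t(-A)e^{(t-s)A}\Phi_m(X_s,Y_s)\,ds=\int_0^t(-A)e^{(t-s)A}\left[\Phi_m(X_s,Y_s)-\Phi_m(X_t,Y_t)\right]ds+(I-e^{tA})\Phi_m(X_t,Y_t)$,
and controls the first integral via the time H\"older continuity of $(X^{m,\vare},Y^{m,\vare})$ (Lemma \ref{L6.3}), accepting a factor $\vare^{-\eta p/2}$ that the overall prefactor $\vare^{p}$ absorbs since $\eta<1$. For $D_x\Phi_m\cdot AX_s$ it uses the pointwise bound $|D_x\Phi_m(x,y)\cdot h|\leq C(1+|x|+|y|)|h|$ from Proposition \ref{P3.6} together with $\left(\EE\|X^{m,\vare}_s\|_2^p\right)^{1/p}\leq C_T s^{-1+\eta/2}\vare^{-\eta/2}(1+\|x\|_{\eta}+|y|)$ from Lemma \ref{L6.4}, again trading a negative power of $\vare$ dominated by the prefactor. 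You should replace your regularity claims on $\Phi$ by arguments of this type; the remainder of your outline is sound.
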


\begin{remark}
The theorem above implies that the convergence order is $1/2$, which is the optimal strong convergence order (see \cite[Example 1]{LD}). Compared with \cite{B1}, we allow
 multiplicative noise in the slow component and we obtain the optimal strong convergence order 1/2.  In contrast to \cite{B3,RXY},  we are dealing with the case of multiplicative noise, hence some methods developed there do not work in this situation.
\end{remark}

It is easy to find that the supremum in strong convergence \eref{ST} is outsider, next we can prove a stronger result, i.e., the supremum can be taken insider. In order to do this, we need the following assumption:
\begin{conditionA}\label{A4}
Suppose that
\begin{eqnarray}
\sup_{x,y\in H}|G_2(x,y)|_{HS}<\infty,\quad \sup_{x\in H}|F_2(x,0)|<\infty.\label{CFSUP}
\end{eqnarray}
Moreover, there exist two constants $\alpha\in (0,2]$ and $\beta\in (0,1)$ such that
$F_1: H^{\beta}\times H^{\beta}\rightarrow H^{\alpha}$, and for any $x,y,y_1,y_2\in H^{\beta}$,
\begin{eqnarray}
\|F_1(x, y_1)-F_1(x, y_2)\|_{\alpha}\leq C(1+\|x\|_{\beta}+\|y_1\|_{\beta}+\|y_2\|_{\beta})\|y_1-y_2\|_{\beta}.\label{BH}
\end{eqnarray}
\end{conditionA}
\begin{theorem} (\textbf{Strong convergence 2})\label{main result 12}
Suppose  that \ref{A1}-\ref{A4} hold. Then for any initial value $(x,y)\in H^{\eta}\times H$ with $\eta\in (0,1)$, $T>0$, $p>1$ and small enough $\vare>0$, there exists a positive constant $C_{p,T}$ depending only on $p$ and $T$ such that
\begin{align}
\mathbb{E} \left(\sup_{t\in [0,T]} |X_{t}^{\vare}-\bar{X}_{t}|^{p} \right)\leq C_{p,T}(1+\|x\|^{3p}_{\eta}+|y|^{3p})\vare^{p/2}. \label{ST2}
\end{align}
\end{theorem}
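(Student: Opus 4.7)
\textbf{Proof proposal for Theorem \ref{main result 12}.}

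The strategy is to exploit Theorem \ref{main result 1}, which is already available under the weaker hypotheses \ref{A1}--\ref{A3}, and to estimate the three components of the mild decomposition
\begin{align*}
X^{\vare}_t - \bar{X}_t &= \int_0^t e^{(t-s)A}\bigl[F_1(X^{\vare}_s, Y^{\vare}_s) - \bar{F}_1(X^{\vare}_s)\bigr]\,ds \\
&\quad + \int_0^t e^{(t-s)A}\bigl[\bar{F}_1(X^{\vare}_s) - \bar{F}_1(\bar{X}_s)\bigr]\,ds \\
&\quad + \int_0^t e^{(t-s)A}\bigl[G_1(X^{\vare}_s) - G_1(\bar{X}_s)\bigr]\,dW^1_s \\
&=: I_1(t) + I_2(t) + I_3(t)
\end{align*}
separately, already with the supremum inside the expectation. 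For $I_2$, Lipschitz continuity of $\bar{F}_1$ inherited from \ref{A1} and H\"older give the pathwise bound $\sup_{t\in [0,T]}|I_2(t)|^p \leq C_{p,T}\int_0^T |X^{\vare}_s - \bar{X}_s|^p\,ds$, so Theorem \ref{main result 1} yields $\EE\sup_{t\in [0,T]}|I_2(t)|^p \leq C_{p,T}(1+\|x\|_{\eta}^{3p}+|y|^{3p})\vare^{p/2}$.

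For the stochastic convolution $I_3$ I would employ the Da Prato--Kwapie\'n--Zabczyk factorization: choose $\rho\in(1/p,1/2)$, rewrite
$$I_3(t) = c_{\rho}\int_0^t (t-r)^{\rho-1}e^{(t-r)A}Z_r\,dr, \qquad Z_r = \int_0^r (r-s)^{-\rho}e^{(r-s)A}[G_1(X^{\vare}_s) - G_1(\bar{X}_s)]\,dW^1_s,$$
apply H\"older in $r$ and the BDG inequality to $Z_r$, and use the Lipschitz bound on $G_1$ in \ref{A1} to obtain $\EE\sup_{t\in [0,T]}|I_3(t)|^p \leq C_{p,T}\int_0^T \EE|X^{\vare}_s - \bar{X}_s|^p\,ds$, which Theorem \ref{main result 1} again bounds by the right-hand side of \eqref{ST2}.

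The real obstacle is $\EE\sup_{t\in [0,T]}|I_1(t)|^p$, and this is where the additional assumption \ref{A4} enters. Setting $\Phi(x,y) := F_1(x,y) - \bar{F}_1(x)$, I would consider the vector-valued Poisson equation $\mathcal{L}_2(x)u(x,\cdot) = -\Phi(x,\cdot)$, where $\mathcal{L}_2(x)$ is the generator of the frozen equation \eref{FZE}. Condition \eqref{BH} places $\Phi$ in $H^{\alpha}$ with Lipschitz dependence in the $H^{\beta}$-norm, while \eqref{CFSUP} combined with \eqref{SDC} provides uniform-in-$x$ exponential ergodicity of the frozen semigroup; together with the derivative bounds in \ref{A3} these are precisely what is needed to construct $u$ as an $H^{\alpha}$-valued map with controlled moments and derivatives in $(x,y)$. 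Applying It\^o's formula to $u(X^{\vare}_s, Y^{\vare}_s)$ and using $(1/\vare)\mathcal{L}_2(X^{\vare}_s)u = -(1/\vare)\Phi$ rearranges to
$$
R_s := \int_0^s \Phi(X^{\vare}_r, Y^{\vare}_r)\,dr = \vare\bigl[u(x,y) - u(X^{\vare}_s, Y^{\vare}_s)\bigr] + \vare\int_0^s \mathcal{K}_r\,dr + \vare M^{\vare}_s,
$$
where $\mathcal{K}_r$ gathers the slow-variable derivatives of $u$ and $M^{\vare}$ is an $H^{\alpha}$-valued martingale whose quadratic variation carries a $1/\vare$ factor, so $\vare M^{\vare}$ contributes at order $\sqrt{\vare}$ by BDG in $H^{\alpha}$.

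Since $R$ is absolutely continuous with density $\Phi(X^{\vare}_\cdot, Y^{\vare}_\cdot)$, integration by parts in the semigroup yields $I_1(t) = R_t + \int_0^t A e^{(t-s)A}R_s\,ds$; the smoothing estimate \eref{P3} gives $|A e^{(t-s)A}R_s| \leq C(t-s)^{-(2-\alpha)/2}\|R_s\|_{\alpha}$, integrable in $s$ since $\alpha\in (0,2]$, and hence $\sup_{t\in [0,T]}|I_1(t)| \leq C_T \sup_{s\in [0,T]}\|R_s\|_{\alpha}$. Combining with the Poisson identity above and taking $L^p$ produces $\EE\sup_{t\in [0,T]}|I_1(t)|^p \leq C_{p,T}(1+\|x\|_{\eta}^{3p}+|y|^{3p})\vare^{p/2}$, and summing the three bounds gives \eqref{ST2}. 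The hardest technical step is establishing the $H^{\alpha}$-regularity and the associated moment and derivative bounds for the Poisson solution $u$ uniformly in the parameter $x$: since $u$ is vector-valued rather than scalar one cannot simply invoke the standard Poisson estimates from the reaction--diffusion setting of \cite{B3,XY} and must rebuild the analysis of Section 3 using \eqref{CFSUP} to prevent any uncontrolled growth of $Y^{\vare}$ when the supremum is pushed inside the expectation.
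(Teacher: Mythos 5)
Your overall architecture --- the three-term mild decomposition, bootstrapping $I_2$ and $I_3$ from Theorem \ref{main result 1}, and invoking \ref{A4} both for the uniform-in-time moments of $Y^{\vare}$ (via \eref{CFSUP}) and for the $H^{\alpha}$-regularity of the Poisson solution (via \eref{BH}) --- matches the paper's, and your treatments of $I_2$ and $I_3$ are fine. The gap is in $I_1$. Writing $I_1(t)=R_t+\int_0^t Ae^{(t-s)A}R_s\,ds$ and bounding the integrand by $C(t-s)^{-1+\alpha/2}\|R_s\|_{\alpha}$ forces you to control $\sup_s\|R_s\|_{\alpha}$, and hence, through your Poisson identity $R_s=\vare\bigl[u(x,y)-u(X^{\vare}_s,Y^{\vare}_s)\bigr]+\vare\int_0^s\mathcal{K}_r\,dr+\vare M^{\vare}_s$, to control \emph{every} term on the right-hand side in the $H^{\alpha}$-norm: you would need $\|D_xu\cdot(Ax+F_1)\|_{\alpha}$, $\|D_{xx}u\cdot(G_1e_k,G_1e_k)\|_{\alpha}$, and a Burkholder--Davis--Gundy inequality for $M^{\vare}$ as an $H^{\alpha}$-valued martingale, i.e.\ Hilbert--Schmidt bounds on $D_yu\cdot G_2$ from $H$ into $H^{\alpha}$. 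Assumption \ref{A4} only gives $H^{\alpha}$-regularity of $F_1$ itself (condition \eref{BH}), which yields $\|\Phi_m(x,y)\|_{\alpha}\le C(1+|x|^2+|y|^2)$ (estimate \eref{E4}) and says nothing about the derivatives of the Poisson solution in $H^{\alpha}$; so this step does not close under the stated hypotheses. (Note also that the cheaper fallback of using time-regularity of $R$, i.e.\ $|R_t-R_s|\le C(t-s)$, destroys the $\vare$-gain, so spatial regularity of $R$ really is what your formulation demands.)

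The paper avoids this by applying the semigroup \emph{before} differentiating: it uses the mild It\^o formula \eref{ID3} for $\Phi_m(X^{m,\vare}_t,Y^{m,\vare}_t)$ with the convolution kernel $e^{(t-s)A}$ built in, so that after rearranging, the only term in which the unbounded operator acts is $\int_0^t(-A)e^{(t-s)A}\Phi_m(X^{m,\vare}_s,Y^{m,\vare}_s)\,ds$, which needs exactly \eref{E4} and nothing more; all remaining terms ($\mathscr{L}^m_1\Phi_m$ and the two martingales $M^{m,\vare,1}$, $M^{m,\vare,2}$) sit behind the contraction $e^{(t-s)A}$ and are estimated in the plain $H$-norm, the stochastic convolutions via the maximal inequality of \cite{HS}, each giving at worst $O(\vare^{-p/2})$ before the prefactor $\vare^{p}$. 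If you reorganize your $I_1$ argument this way --- expanding $-\int_0^te^{(t-s)A}\mathscr{L}^m_{2}(X^{m,\vare}_s)\Phi_m(X^{m,\vare}_s,Y^{m,\vare}_s)\,ds$ directly rather than integrating $R$ by parts --- the rest of your proposal goes through.
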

\begin{remark}
The result \eref{ST2} for the stochastic system \eref{main equation} is new and we have not seen results in the literature, which also give a positive answer to a open question in \cite[Remark 4.9]{B2}.
\end{remark}
\vspace{0.3cm}

To obtain the optimal weak convergence order, we need the following assumption:

\begin{conditionA}\label{A5}
Suppose that the noise in slow component is additive, i.e., $G_1(x)\equiv G_1$, where $G_1\in L_{2}(H; H)$ and satisfies $G_1 e_k=\sqrt{\alpha_{1k}}e_k$ with $\alpha_{1k}\geq 0$ and $\sum^{\infty}_{k=1}\lambda_k\alpha_{1k}<\infty$. Moreover, condition \eref{BH} holds and there exists $C>0$ such that for any $x,y,y_1,y_2\in H^{\beta}$,
\begin{eqnarray}
&&\|F_1(x,y)\|_{\alpha}\leq C(1+\|x\|_{\beta}+\|y\|_{\beta}).\label{BF}
\end{eqnarray}
Meanwhile, there exists $\kappa\in (0,1)$ such that the following directional derivatives are well-defined and satisfy:
\begin{eqnarray}
&&\!\!\!\!\!\!\!\!\!\!\!\!\!\!\!\!\!|D_{xxx}F_1(x,y)\cdot(h,k,l)|\leq C|h|\|k\|_{\kappa}\|l\|_{\kappa}, \quad \forall x,y, h\in H, k,l\in H^{\kappa}, \label{ThirdDer F4}\\
&&\!\!\!\!\!\!\!\!\!\!\!\!\!\!\!\!\!|D_{xxx}F_2(x,y)\cdot(h,k,l)|\leq C|h|\|k\|_{\kappa}\|l\|_{\kappa}, \quad \forall x,y, h\in H, k,l\in H^{\kappa} ;  \label{ThirdDer F5}\\
&&\!\!\!\!\!\!\!\!\!\!\!\!\!\!\!\!\!|D_{xxx}G_2(x,y)\cdot(h,k,l)|_{HS}\leq C|h|\|k\|_{\kappa}\|l\|_{\kappa}, \quad \forall x,y,h\in H, k,l\in H^{\kappa} .  \label{ThirdDer G2}
\end{eqnarray}
\end{conditionA}
\begin{remark}
Recall the notations in Remark \ref{EA3}, i. e., $H=\{g\in L^2(0,1): g(0)=g(1)=0\}$ and let $A:=\partial^2_{\xi}$ be the Laplacian operator.
$F(x,y)(\xi):=f(x(\xi),y(\xi))$.  Assume that $\partial_y f(\cdot,\cdot):\RR\times \RR\rightarrow \RR$ is a global Lipschitz function, then it follows from \cite[Proposition 2.1]{B3} or \cite[Section 3.2]{BD} that for any $\alpha\in (0,1)$, $\tau\in (0, 1-\alpha)$,
\begin{eqnarray*}
\|(-A)^{\alpha/2}\left[f(x,y_1)-f(x,y_2)\right]\|_{L^2}\leq\!\!\!\!\!\!\!\!&&C\Big(1+\|(-A)^{\frac{\alpha+\tau}{2}}x\|_{L^4}+\|(-A)^{\frac{\alpha+\tau}{2}}y_1\|_{L^4}\nonumber\\
&&\quad+\|(-A)^{\frac{\alpha+\tau}{2}}y_2\|_{L^4}\Big)\|(-A)^{\frac{\alpha+\tau}{2}}(x-y)\|_{L^4},
\end{eqnarray*}
where $\|g\|_{L^p}:=\left[\int^1_0 |g(\xi)|^p d\xi \right]^{1/p}$. Then by Sobolev inequality $\|x\|_{L^4}\leq C\|x\|_{\frac{1}{4}}$, we have
\begin{eqnarray*}
\|F(x,y_1)-F(x,y_2)\|_{\alpha}\leq\!\!\!\!\!\!\!\!&& C\Big(1+\|x\|_{\alpha+\tau+1/4}\nonumber\\
&&\quad+\|y_1\|_{\alpha+\tau+1/4}+\|y_2\|_{\alpha+\tau+\frac{1}{4}}\Big)\|x-y\|_{\alpha+\tau+\frac{1}{4}}.
\end{eqnarray*}
Furthermore, if $f(\cdot,\cdot):\RR\times \RR\rightarrow \RR$ is a global Lipschitz function, then it follows
\begin{eqnarray*}
\|F(x,y)\|_{\alpha}\leq\!\!\!\!\!\!\!\!&&C\Big(1+\|(-A)^{\frac{\alpha+\tau}{2}}x\|_{L^2}+\|(-A)^{\frac{\alpha+\tau}{2}}y\|_{L^2}\Big)\nonumber\\
\leq\!\!\!\!\!\!\!\!&&C\Big(1+\|x\|_{\alpha+\tau}+\|y\|_{\alpha+\tau}\Big).
\end{eqnarray*}
Consequently, $F$ satisfies the conditions \eref{BH} and \eref{BF} for any $\alpha\in (0,3/4)$ and $\beta=\alpha+\tau+1/4$ with $\tau<3/4-\alpha$.

Assume that $\sup_{x,y\in \RR}|\partial_{xxx}f(x,y)|<\infty$. Note that $H^{\kappa}\subset L^{\infty}(0,1)$ for any $\kappa>1/2$, then for any $x,y, h\in H$ and $k,l \in H^{\kappa} $,  we have
$$
|D_{xxx}F(x,y)\cdot(h,k,l)|=|\partial_{xxx}f(x,y)hkl|\leq C|h|\|k\|_{\kappa}\|l\|_{\kappa}.
$$
\end{remark}

The following is our third result.

\begin{theorem}(\textbf{Weak convergence})\label{main result 2}
Suppose that \ref{A1}-\ref{A3}, \ref{A5} and  \eref{BH} hold. Then for any test function $\phi\in C^3_b(H)$, initial value $(x,y)\in H^{\eta}\times H$ with $\eta\in (0,1)$, $T>0$ and $\vare>0$, there exists a positive constant $C$ depending only on $T$ such that
\begin{align}
\sup_{t\in [0,T]}\left|\mathbb{E}\phi(X_{t}^{\vare})-\EE\phi(\bar{X}_{t})\right|\leq C_{T}(1+\|x\|^3_{\eta}+|y|^3)\vare. \label{OWC}
\end{align}
\end{theorem}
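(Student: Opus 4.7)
The plan is to combine the backward Kolmogorov equation for the averaged process with the Poisson equation technique in order to extract a factor of~$\vare$ from the weak error. Let $u(t,x):=\EE\phi(\bar{X}^{x}_{t})$, so that $u$ solves $\partial_t u=\bar{\cL}u$ with $u(0,\cdot)=\phi$, where $\bar{\cL}\psi(x)=\langle Ax+\bar{F}_1(x),D\psi\rangle+\tfrac12\tr(G_1G_1^{*}D^2\psi)$, and the generator of the slow component is $\cL_1(x,y)\psi(x)=\langle Ax+F_1(x,y),D\psi\rangle+\tfrac12\tr(G_1G_1^{*}D^2\psi)$. Applying It\^o's formula to $s\mapsto u(t-s,X^{\vare}_s)$ reduces the weak error to the single averaging integral
\begin{equation*}
\EE\phi(X^{\vare}_{t})-\EE\phi(\bar{X}_{t})=\EE\int_0^t\langle F_1(X^{\vare}_s,Y^{\vare}_s)-\bar{F}_1(X^{\vare}_s),\,Du(t-s,X^{\vare}_s)\rangle\,ds.
\end{equation*}

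To gain the factor $\vare$, I introduce the Poisson equation associated with the frozen fast process. For each $x\in H$ let $\cL_2^{x}$ denote the generator of \eqref{FZE}; by the centering $\int_H(F_1(x,y)-\bar{F}_1(x))\mu^{x}(dy)=0$ and the exponential ergodicity established in Section~3 under \ref{A2}, one can solve
\begin{equation*}
\cL_2^{x}\Phi(x,y)=\bar{F}_1(x)-F_1(x,y),\qquad \Phi(x,y)=\int_0^{\infty}\EE\bigl[F_1(x,Y_{r}^{x,y})-\bar{F}_1(x)\bigr]\,dr,
\end{equation*}
for an $H$-valued $\Phi$ whose growth and derivatives $D_x\Phi$, $D_y\Phi$, $D_{xx}\Phi$, $D_{xy}\Phi$ are controlled by Section~3 together with \ref{A3} and the third-derivative bounds \eqref{ThirdDer F4}--\eqref{ThirdDer G2} of \ref{A5}. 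Setting $\Psi(s,x,y):=\langle\Phi(x,y),Du(t-s,x)\rangle$, I apply It\^o's formula to $\vare\,\Psi(s,X^{\vare}_s,Y^{\vare}_s)$. Since $Du(t-s,\cdot)$ does not depend on~$y$, the $(1/\vare)\cL_2^{x}$ piece of the full generator of $(X^{\vare},Y^{\vare})$ produces exactly the previous integrand via the Poisson equation, and rearranging gives
\begin{equation*}
\EE\phi(X^{\vare}_{t})-\EE\phi(\bar{X}_{t})=-\vare\,\EE\bigl[\Psi(t,X^{\vare}_t,Y^{\vare}_t)-\Psi(0,x,y)\bigr]+\vare\,\EE\int_0^t(\partial_s\Psi+\cL_1\Psi)(s,X^{\vare}_s,Y^{\vare}_s)\,ds,
\end{equation*}
so every term on the right-hand side carries the desired prefactor~$\vare$.

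What remains is to bound the right-hand side uniformly in $\vare$ by $C_T(1+\|x\|_{\eta}^{3}+|y|^{3})$. The boundary contributions follow from the linear growth of~$\Phi$, the bound $\sup_{t,x}|Du(t,x)|\leq C_T\|D\phi\|_{\infty}$ for $\phi\in C^3_b(H)$, and the uniform moment estimates on $(X^{\vare},Y^{\vare})$ from the appendix. The interior integrand $\partial_s\Psi+\cL_1\Psi$ expands into terms of type $\langle Ax,(D_x\Phi)^{*}Du+D^2u\cdot\Phi\rangle$, $\tr(G_1G_1^{*}D^2\Psi)$ and analogous terms coming from $D\bar\cL u$. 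The role of \ref{A5} is precisely to control these: the additivity of $G_1$ with $\sum_k\lambda_k\alpha_{1k}<\infty$ and the mapping $F_1:H^{\beta}\times H^{\beta}\to H^{\alpha}$ supply the smoothing estimates $\|D^{j}u(t,\cdot)\cdot h\|_{\sigma}\lesssim t^{-\sigma/2}|h|$ with integrable time singularities, while \eqref{ThirdDer F4}--\eqref{ThirdDer G2} provide the bounded~$D^3u$ needed after~$\cL_1$ acts on~$u$ inside~$\Psi$.

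The main obstacle is the rigorous control of the $\langle Ax,\cdot\rangle$ contributions in $\cL_1\Psi$: since $A$ is unbounded, each occurrence must be traded for a fractional norm of the other factor by carefully interpolating between the $H^{\delta}$, $H^{\kappa}$, $H^{\alpha}$, $H^{\beta}$ and $H^{\eta}$ scales introduced in \ref{A3} and \ref{A5} and using the smoothing estimates \eqref{P3}--\eqref{P4} for both the averaged semigroup and the frozen semigroup, always keeping the resulting singularity in $(t-s)$ integrable. A secondary technical point is the rigorous justification of It\^o's formula for~$\Psi$, which is carried out on the Galerkin approximations of the appendix and passed to the limit. Combining these estimates with the third-moment bound on $X^{\vare}$ in $H^{\beta}$ yields \eqref{OWC}.
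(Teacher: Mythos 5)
Your proposal is correct and follows essentially the same route as the paper: Kolmogorov equation for the averaged semigroup, It\^o's formula applied to $u(t-s,X^{\vare}_s)$ to isolate the averaging integral, then a second Poisson-equation/It\^o step to extract the factor $\vare$, with the whole argument justified on the Galerkin approximations. The only cosmetic difference is that you solve the Poisson equation for the $H$-valued $\Phi$ and pair it with $Du(t-s,\cdot)$ afterwards, whereas the paper solves it directly for the scalar observable $K^t_m(s,x,y)=D_x\tilde u^t_m(s,x)\cdot F^m_1(x,y)$; since $\mathscr{L}_2(x)$ acts only in $y$, your $\Psi(s,x,y)=\langle\Phi(x,y),Du(t-s,x)\rangle$ coincides with the paper's $\tilde{\Phi}^t_m$.
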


\begin{remark}
The theorem above implies that the convergence order is $1$, which is the optimal weak convergence order. In contrast to \cite{B1}, we allow a multiplicative noise in the slow component and we achieve  the optimal weak convergence order 1. Compared with \cite{B3, XY}, we here consider a general model for stochastic system \eref{main equation}.
\end{remark}

\subsection{Idea of proof}
For the convenience of our reader, we sketch the main idea of the proof of  the strong convergence 1. The  strong convergence 2 and weak convergence can be handled similarly.
\vspace{0.05cm}

$\textbf{Step 1 (Galerkin approximation):}$ Since the operator $A$ is an unbounded operator, we use the Galerkin approximation to reduce the infinite dimensional problem to a finite dimensional one, i.e.,  we consider
\begin{equation}\left\{\begin{array}{l}\label{Ga mainE}
\displaystyle
dX^{m,\vare}_t=[AX^{m,\vare}_t+F^m_1(X^{m,\vare}_t, Y^{m,\vare}_t)]dt+G^m_1(X^{m,\vare}_t)d\bar W^{1,m}_t,\  X^{m,\vare}_0=x^{m}\in H_m,\\
dY^{m,\vare}_t=\frac{1}{\vare}[AY^{m,\vare}_t+F^m_2(X^{m,\vare}_t, Y^{m,\vare}_t)]dt+\frac{1}{\sqrt{\varepsilon}}G^m_2(X^{m,\vare}_t, Y^{m,\vare}_t)d\bar W^{2,m}_t,\quad Y^{m,\vare}_0=y^m\in H_m,\end{array}\right.
\end{equation}
where $m\in \mathbb{N}_{+}$, $H_{m}:=\text{span}\{e_{k}:1\leq k \leq m\}$, $\pi_{m}$ is the orthogonal projection of $H$ onto $H_{m}$, $x^{m}:=\pi_m x, y^m:=\pi_m y$ and
\begin{eqnarray*}
&& F^{m}_1(x,y):=\pi_m F_1(x,y),\quad F^{m}_2(x,y):=\pi_m F_2(x, y),\\
&& G^{m}_1(x):=\pi_m G_1(x),\quad G^{m}_2(x,y):=\pi_m G_2(x, y),\\
&&\bar W^{1,m}_t:=\sum^m_{k=1}W^{1,k}_{t}e_k,\quad \bar W^{2,m}_t:=\sum^m_{k=1}W^{2,k}_{t}e_k.
\end{eqnarray*}
Similarly, we consider the following approximation to the averaged equation \eref{1.3}:
\begin{equation}\left\{\begin{array}{l}
\displaystyle d\bar{X}^m_{t}=\left[A\bar{X}^m_{t}+\bar{F}^m_1(\bar{X}^m_{t})\right]dt+G^m_1(\bar{X}^m_{t})d \bar{W}^{1,m}_{t},\\
\bar{X}^m_{0}=x^m,\end{array}\right. \label{Ga 1.3}
\end{equation}
where $\bar{F}^m_1(x):=\int_{H_m}F^m_1(x,y)\mu^{x,m}(dy)$, and $\mu^{x,m}$ is the unique invariant measure of the transition semigroup of the following frozen equation:
\begin{equation}\left\{\begin{array}{l}
\displaystyle dY^{x,y,m}_t=[AY^{x,y,m}_t+F^m_2(x,Y^{x,y,m}_t)]dt+G^m_2(x,Y^{x,y,m}_t)d\bar{W}^{2,m}_t,\\
Y^{x,y,m}_0=y\in H_m.\end{array}\right. \label{Ga FZE}
\end{equation}

 It is easy to see that for any $T>0$, $p\geq 1$ and $m\in \mathbb{N}_{+}$,
\begin{eqnarray*}
\sup_{t\in [0, T]}\EE|X_{t}^{\vare}-\bar{X}_{t}|^p\leq\!\!\!\!\!\!\!\!&&C_p\sup_{t\in [0, T]}\EE|X_{t}^{m,\vare}-X_{t}^{\vare}|^p+C_p\sup_{t\in [0, T]}\EE|\bar{X}^m_{t}-\bar{X}_{t}|^p\\
&&+C_p\sup_{t\in [0, T]}\EE|X_{t}^{m,\vare}-\bar{X}^m_{t}|^p.
\end{eqnarray*}
Using the following two approximations (see Lemmas \ref{GA1} and \ref{GA2} in the Appendix), we get
\begin{eqnarray*}
\lim_{m\rightarrow\infty}\sup_{t\in [0, T]}\EE|X_{t}^{m,\vare}-X_{t}^{\vare}|^p=0,\quad \lim_{m\rightarrow\infty}\sup_{t\in [0, T]}\EE|\bar{X}^m_{t}-\bar{X}_{t}|^p=0.
\end{eqnarray*}
Then the proof will be complete if we show that there exists a positive constant $C$ independent of $m$ such that
$$
\sup_{t\in [0, T]}\EE|X_{t}^{m,\vare}-\bar{X}^m_{t}|^p\leq C\vare^{p/2},
$$
which will be proved by using the  Poisson equation in Step 2.

\vspace{0.1cm}
$\textbf{Step 2 (Technique of Poisson equation):}$
Using the formulation of the mild solutions $X_{t}^{m,\vare}$ and $\bar{X}^{m}_{t}$, we have for any $t>0$,
\begin{eqnarray*}
\!X_{t}^{m,\vare}\!-\!\bar{X}^{m}_{t}\!\!\!=\!\!\!\!\!\!\!\!&&\int_{0}^{t}\!\!e^{(t-s)A}\!\left[F^m_1(X_{s}^{m,\vare},Y_{s}^{m,\vare})\!-\!\bar{F}^m_1(\bar{X}^{m}_{s})\right]\!ds\!+\!\int_{0}^{t}\!\!e^{(t-s)A}\!\left[G^m_1(X_{s}^{m,\vare})\!-\!G^m_1(\bar{X}^{m}_{s})\right]\!d\bar{W}_s^{1,m}\\
=\!\!\!\!\!\!\!\!&&\int_{0}^{t}\!\!\!e^{(t-s)A}\!\!\left[F^m_1( X^{m,\vare}_{s},Y_{s}^{m,\vare})\!-\!\bar{F}^m_1(X^{m,\vare}_{s})\right]ds+\!\int_{0}^{t}\!\!\!e^{(t-s)A}\!\!\left[\bar{F}^m_1(X^{m,\vare}_{s})\!-\!\bar{F}^m_1(\bar{X}^m_{s})\right]ds\\
\!\!\!\!\!\!\!\!&&+\int_{0}^{t}e^{(t-s)A}\left[G^m_1(X_{s}^{m,\vare})-G^m_1(\bar{X}^{m}_{s})\right]d\bar{W}_s^{1,m}.
\end{eqnarray*}
In \eref{BL} below, we will show that the averaged coefficient $\bar{F}^m_1$ is Lipschitz,  i.e.,
$$
|\bar{F}^m_1(x)-\bar{F}^m_1(y)|\leq C|x-y|,
$$
where $C$ is a positive constant independent of $m$.  Then we get for any $T>0$,
\begin{eqnarray*}
\sup_{t\in[0,T]}\EE|X_{t}^{m,\vare}-\bar{X}^m_{t}|^p\leq\!\!\!\!\!\!\!\!&&C_p\sup_{t\in[0,T]}\EE\left|\int_{0}^{t}e^{(t-s)A}\left[F^m_1(X^{m,\vare}_{s},Y_{s}^{m,\vare})-\bar{F}^m_1(X^{m,\vare}_{s})\right]ds\right|^p\\
&&+C_{p,T}\EE\int_{0}^{T}|X_{t}^{m,\vare}-\bar{X}^m_{t}|^p dt .
\end{eqnarray*}
Then it follows from Gronwall's inequality that
\begin{eqnarray*}
\sup_{t\in[0,T]}\EE|X_{t}^{m,\vare}-\bar{X}^m_{t}|^p\leq\!\!\!\!\!\!\!\!&&C_{p,T}\sup_{t\in[0,T]}\EE\left|\int_{0}^{t}e^{(t-s)A}\left[F^m_1(X^{m,\vare}_{s},Y_{s}^{m,\vare})-\bar{F}^m_1(X^{m,\vare}_{s})\right]ds\right|^p, \label{ID1}
\end{eqnarray*}

To estimate the right-hand side above,
we consider the following Poisson equation:
\begin{equation}
-\mathscr{L}^m_{2}(x)\Phi_m(x,y)=F^m_1(x,y)-\bar{F}^m_1(x),\quad x, y\in H_m.\label{ID2}
\end{equation}
where $\mathscr{L}^m_{2}(x)$ is the infinitesimal generator of the transition semigroup of the frozen equation \eref{Ga FZE} (see \eref{L_2} below).

If $\Phi_m$ is a sufficiently regular solution of equation \eref{ID2} (see Proposition \ref{P3.6} below), applying It\^o's formula we get
\begin{eqnarray}
\Phi_m(X^{m,\vare}_{t},Y^{m,\vare}_{t})=\!\!\!\!\!\!\!\!&&e^{tA}\Phi_m(x^m,y^m)+\int^t_0 (-A)e^{(t-s)A}\Phi_m(X^{m,\vare}_{s},Y^{m,\vare}_{s})ds\nonumber\\
&&+\int^t_0 e^{(t-s)A}\mathscr{L}^m_{1}(Y^{m,\vare}_{s})\Phi_m(X^{m,\vare}_{s},Y^{m,\vare}_{s})ds\nonumber\\
&&+\frac{1}{\vare}\int^t_0 e^{(t-s)A}\mathscr{L}^m_{2}(X_{s}^{m,\vare})\Phi_m(X^{m,\vare}_{s},Y^{m,\vare}_{s})ds+M^{m,\vare,1}_{t}+M^{m,\vare,2}_{t},\label{ID3}
\end{eqnarray}
where $\mathscr{L}^m_{1}(y)$ is the infinitesimal generator of the transition semigroup for the slow component $X^{\vare}$ with  fixed fast component $y$
(see \eref{L^m_1} below), $M^{m,\vare,1}_{t}$ and $M^{m,\vare,2}_{t}$ are defined in \eref{M_1} and \eref{M_2} below.

As a result, it follows
\begin{eqnarray}
&&\int^t_0 -e^{(t-s)A}\mathscr{L}^m_{2}(X_{s}^{m,\vare})\Phi_m(X^{m,\vare}_{s},Y^{m,\vare}_{s}) ds\nonumber\\
=\!\!\!\!\!\!\!\!&&\vare\Big[e^{tA}\Phi_m(x^m,y^m)-\Phi_m(X_{t}^{m,\vare},Y^{m,\vare}_{t})+\int^t_0 (-A)e^{(t-s)A}\Phi_m(X^{m,\vare}_{s},Y^{m,\vare}_{s})ds\nonumber\\
&&+\int^t_0 \!\!e^{(t-s)A}\mathscr{L}^m_{1}(Y^{m,\vare}_{s})\Phi_m(X^{m,\vare}_{s},Y^{m,\vare}_{s})ds+M^{m,\vare,1}_{t}+M^{m,\vare,2}_{t}\Big].\label{ID4}
\end{eqnarray}
By \eref{ID2}-\eref{ID4},  we get
\begin{eqnarray}
\sup_{t\in [0, T]}\EE|X_{t}^{m,\vare}-\bar{X}^m_{t}|^p\leq\!\!\!\!\!\!\!\!&&C_{p,T}\sup_{t\in[0,T]}\EE\left|\int_{0}^{t}-e^{(t-s)A}\mathscr{L}^m_{2}(X_{s}^{m,\vare})\Phi_m(X_{s}^{m,\vare},Y^{m,\vare}_{s})ds\right|^p\nonumber\\
\leq\!\!\!\!\!\!\!\!&&C_{p,T}\vare^{p}\Bigg\{\sup_{t\in[0,T]}\EE|e^{tA}\Phi_m(x^m,y^m)-\Phi_m(X_{t}^{m,\vare},Y^{m,\vare}_{t})|^p\nonumber\\
&&+\sup_{t\in[0,T]}\EE\left|\int^t_0 (-A)e^{(t-s)A}\Phi_m(X^{m,\vare}_{s},Y^{m,\vare}_{s})ds\right|^p\nonumber\\
&&+\sup_{t\in[0,T]}\EE\left|\int^t_0 e^{(t-s)A}\mathscr{L}^m_{1}(Y^{m,\vare}_{s})\Phi_m(X_{s}^{m,\vare},Y^{m,\vare}_{s})ds\right|^p\nonumber\\
&&+\sup_{t\in[0,T]}\EE|M^{m,\vare,1}_{t}|^p+\sup_{t\in[0,T]}\EE|M^{m,\vare,2}_{t}|^p\Bigg\}.\nonumber
\end{eqnarray}
Hence, the remaining work is to estimate all the terms above, which crucially depends on the regularity of the solution $\Phi_m(x,y)$ of the Poisson
equation \eref{ID2}. So we are going to study the regularity of $\Phi_m(x,y)$ with respect to $x$ and $y$ in the next section.

\section{The Poisson equation}

In Subsection 3.1, we give some a priori estimates of the solution of the finite dimensional frozen equation and the asymptotic behavior of its transition semigroup. In subsection 3.2, we study the regularity of solution $\Phi_m(x,y)$ of the Poisson equation. Note that we always assume \ref{A1}-\ref{A3} hold in this section.

\subsection{The frozen equation}
Recall the finite dimensional frozen equation \eref{Ga FZE}. Note that $F^m_2(x,\cdot)$ is Lipschitz continuous, then it is easy to show that for any fixed $x\in H$ and initial value $y\in H_m$,
equation $(\ref{Ga FZE})$ has a unique solution $\{Y_{t}^{x,y,m}\}_{t\geq 0}$ in $H_m$, i.e., $\PP$-a.s.,
\begin{eqnarray*}
Y_{t}^{x,y,m}=\!\!\!\!\!\!\!\!&&e^{tA}y+\int^t_0 e^{(t-s)A}F^m_2(x,Y_{s}^{x,y,m})ds+\int^t_0e^{(t-s)A}G^m_2(x,Y^{x,y,m}_s)d\bar{W}^{2,m}_s.
\end{eqnarray*}
Moreover, the solution $\{Y_{t}^{x,y,m}\}_{t\geq 0}$ is a time homogeneous Markov process. Let $\{P^{x,m}_t\}_{t\geq 0}$ be its transition semigroup, i.e., for any bounded measurable function $\varphi: H_m\rightarrow \mathbb{R}$,
$$
P^{x,m}_t\varphi(y):=\EE\varphi(Y_{t}^{x,y,m}), \quad y\in H_m, t\geq 0.
$$

Before studying the asymptotic behavior of $\{P^{x,m}_t\}_{t\geq 0}$, we prove the following lemma.

\begin{lemma}\label{L3.2}
For any $p\geq1$ and $\eta\in (0,1)$, we have
\begin{eqnarray}
&&(i)\quad \sup_{t\geq 0,x\in H,m\geq 1}\EE|Y_{t}^{x,y,m}|^p\leq e^{-\frac{ p(\lambda_1-L_{F_2})t}{4}}|y|^p+C_p(1+|x|^p).\label{FEq2}\\
&&(ii) \sup_{m\geq 1}\left(\EE\|Y_{t}^{x,y,m}\|^p_{\eta}\right)^{1/p}\leq Ct^{-\frac{\eta}{2}}e^{-\frac{\lambda_1 t}{2}}|y|+C(1+|x|+|y|);\label{FEqH}
\end{eqnarray}
Furthermore, if condition \eref{CFSUP} holds, we have
\begin{eqnarray}
(iii) \sup_{x\in H,m\geq 1}\EE\left(\sup_{t\in [0,T]}|Y_{t}^{x,y,m}|^p\right)\leq C_{p}T^{p/2}+C|y|^p,\quad \forall T>0.\label{FEq1}
\end{eqnarray}
\end{lemma}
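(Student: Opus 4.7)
The plan is to prove the three bounds in order, with the guiding principle that every constant must be independent of $m$. Since $\pi_{m}$ commutes with $A$, has operator norm $1$, and preserves Hilbert-Schmidt norms, while the constants in \ref{A1}--\ref{A3} are $m$-free, uniformity comes essentially for free as long as one phrases the estimates through the HS norm of $G_{2}$ and through the semigroup bounds \eqref{SP1}--\eqref{P4}.

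For (i), I would apply It\^o's formula to $|Y_{t}^{x,y,m}|^{p}$. Self-adjointness of $A$ gives $\langle AY, Y\rangle \leq -\lambda_{1}|Y|^{2}$; the Lipschitz bound on $F_{2}$ in \ref{A1} yields $\langle F_{2}^{m}(x,Y),Y\rangle \leq L_{F_2}|Y|^{2} + C(1+|x|)|Y|$; and $|G_{2}^{m}(x,Y)|_{HS}^{2}\leq(1+\kappa)L_{G_2}^{2}|Y|^{2}+C_{\kappa}(1+|x|^{2})$ follows from \eqref{BOG2}, the Lipschitz bound on $G_{2}$, and Young's inequality. Condition \eqref{SDC} lets me choose $\kappa>0$ so that the resulting coefficient of $|Y|^{2}$ is strictly negative; Gronwall then delivers the exponential decay for $p=2$. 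For $p>2$, applying It\^o to $|\cdot|^{p}$ and absorbing the extra cross-terms via Young gives the same structure; $1\leq p<2$ reduces to $p=2$ by H\"older.

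For (ii), I would work from the mild formulation and estimate the three parts separately: the semigroup term by \eqref{P3}, the drift convolution by \eqref{P3} combined with the linear-growth estimate $|F_{2}^{m}(x,Y)|\leq C(1+|x|+|Y|)$ (the kernel $(t-s)^{-\eta/2}e^{-\lambda_{1}(t-s)/2}$ is integrable since $\eta<1$), and the stochastic convolution via the Hilbert-space Burkholder inequality
\begin{equation*}
\EE\left\|\int_{0}^{t}e^{(t-s)A}G_{2}^{m}(x,Y_{s}^{x,y,m})d\bar{W}_{s}^{2,m}\right\|_{\eta}^{p} \leq C_{p}\,\EE\left(\int_{0}^{t}(t-s)^{-\eta}e^{-\lambda_{1}(t-s)}|G_{2}^{m}(x,Y_{s}^{x,y,m})|_{HS}^{2}\,ds\right)^{p/2},
\end{equation*}
which is derived from the operator bound $\|(-A)^{\eta/2}e^{tA}\|\leq Ct^{-\eta/2}e^{-\lambda_{1}t/2}$ and then combined with \eqref{BOG2} and part (i). Summing the three contributions gives \eqref{FEqH}.

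For (iii), the additional hypothesis \eqref{CFSUP} makes $|G_{2}(x,y)|_{HS}$ and $|F_{2}(x,0)|$ uniformly bounded, so $|F_{2}(x,Y)|\leq K+L_{F_2}|Y|$ uniformly in $x$. In this bounded setting, (i) sharpens to $\EE|Y_{t}^{x,y,m}|^{p}\leq e^{-ct}|y|^{p}+C_{p}$ with no $x$-dependence. Taking $\sup_{t\in[0,T]}$ in the mild formulation, the semigroup term satisfies $\sup_{t}|e^{tA}y|\leq|y|$ by \eqref{SP1}, the drift convolution is bounded pathwise by $\int_{0}^{T}(K+L_{F_2}|Y_{s}^{x,y,m}|)ds$, and the supremum of the stochastic convolution is handled by the Da Prato--Kwapie\'n--Zabczyk factorization method: for $\alpha\in(0,1/2)$ with $\alpha p>1$, one rewrites the convolution as a deterministic convolution of an auxiliary process whose $L^{p}$ moments in time are finite thanks to the uniform bound on $G_{2}$. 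Plugging these into the mild formulation, taking expectations, and invoking the sharpened version of (i) to control $\EE|Y_{s}^{x,y,m}|^{p}$ produces $C_{p}T^{p/2}+C|y|^{p}$ with no $x$-dependence. The most delicate step of the whole lemma is the Burkholder bound for the $\|\cdot\|_{\eta}$-norm in (ii); the choice to express it through the HS norm of $G_{2}^{m}$ rather than through an operator norm is exactly what secures uniformity in $m$.
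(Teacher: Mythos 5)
Your outline for parts (i) and (ii) follows the same strategy as the paper (It\^o's formula plus a comparison argument, then the mild formulation with Minkowski and Burkholder--Davis--Gundy against the kernel $(t-s)^{-\eta}e^{-\lambda_1(t-s)}$), but there is one point in (i) that, as written, does not go through for general $p$. You propose to control the quadratic-variation contribution via $|G^m_2(x,Y)|^2_{HS}\leq (1+\kappa)L^2_{G_2}|Y|^2+C_\kappa(1+|x|^2)$ and to get negativity of the coefficient of $|Y|^2$ from \eref{SDC}. That closes the argument at $p=2$, but for $p>2$ the It\^o correction carries the factor $\tfrac{p(p-1)}{2}|Y|^{p-2}|G_2|^2_{HS}$, so the Lipschitz-based bound produces a term of order $\tfrac{p(p-1)}{2}L^2_{G_2}|Y|^p$, and \eref{SDC} only guarantees $2\lambda_1-2L_{F_2}>L^2_{G_2}$, not $>(p-1)L^2_{G_2}$; ``absorbing the extra cross-terms via Young'' cannot repair a term that is already of top order $|Y|^p$ with too large a coefficient. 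The paper avoids this by using the sublinear growth \eref{BOG2} with $\zeta<1$: then $|Y|^{p-2}|G_2|^2_{HS}\leq C|Y|^{p-2+2\zeta}+C(1+|x|^2)|Y|^{p-2}$ is of strictly lower order than $|Y|^p$ and Young's inequality absorbs it with an arbitrarily small top-order coefficient, which is what yields the decay rate $e^{-p(\lambda_1-L_{F_2})t/4}$ uniformly in $p$. Since the lemma is stated for all $p\geq 1$ and is invoked with large $p$, you should replace the displayed Lipschitz bound on $|G_2|^2_{HS}$ by the one coming from \eref{BOG2}.

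For (iii) your route is genuinely different from the paper's: you work from the mild formulation, bounding the drift convolution pathwise and the supremum of the stochastic convolution by the Da Prato--Kwapie\'n--Zabczyk factorization, whereas the paper stays with the It\^o expansion of $|Y_t|^p$, notes that \eref{CFSUP} makes the drift inner product bounded above by a constant and the Burkholder term controlled by $\EE\bigl[\int_0^T|Y_s|^{2p-2}ds\bigr]^{1/2}$, and then absorbs $\tfrac12\EE\sup_t|Y_t|^p$ by Young to land directly on $C_pT^{p/2}+C|y|^p$. Your version is workable but requires a little care with the $T$-dependence: the drift convolution, estimated by H\"older against the weight $e^{-\lambda_1(t-s)}$ together with \eref{FEq2}, contributes a term of order $T$ rather than $T^{p/2}$, which is harmless in the regime $T\geq 1$ where the estimate is actually used (the time-rescaled application with horizon $T/\vare$), but is worth stating explicitly since the exponent $p/2$ is what ultimately produces the $\vare^{-p/2}$ in the a priori bound for $\sup_t|Y^\vare_t|$. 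The paper's direct argument buys the exact power $T^{p/2}$ with no case distinction; yours avoids manipulating the real-valued martingale $\int|Y|^{p-2}\langle Y,G_2\,dW\rangle$ at the price of the factorization machinery and the restriction $\alpha p>1$.
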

\begin{proof}
(i) For any fixed $T>0$ and $p\geq 4$, by It\^{o}'s formula (see e.g. \cite[Theorem 6.1.1]{LR}), we have
\begin{eqnarray}
|Y^{x,y,m}_t|^p=\!\!\!\!\!\!\!\!&&|y|^p+p\int^{t}_{0}|Y^{x,y,m}_s|^{p-2}\langle Y^{x,y,m}_s,AY^{x,y,m}_s+F^m_2(x,Y^{x,y,m}_s\rangle ds\nonumber\\
\!\!\!\!\!\!\!\!&&+p\int^{t}_{0}|Y^{x,y,m}_s|^{p-2}\langle Y^{x,y,m}_s,G^m_2(x,Y^{x,y,m}_s)d\bar{W}_s^{2,m}\rangle\nonumber\\
&&+\frac{p}{2}\int_{0} ^{t}|Y^{x,y,m}_s|^{p-2}|G^m_2(x,Y^{x,y,m}_s)|_{HS}^2ds\nonumber\\
&&+\frac{p(p-2)}{2}\int_{0}^{t}|Y^{x,y,m}_s|^{p-4}|G^m_2(x,Y^{x,y,m}_s)^{\ast} Y^{x,y,m}_s|^2ds.\label{Ito formula}
\end{eqnarray}

Note that
$$
\langle Ay,y\rangle=-\|y\|^2_{1}\leq -\lambda_1|y|^2.
$$
Assumption \ref{A1} implies that
$$
|F^m_2(x,y)|\leq |F^m_2(x,y)-F^m_2(0,0)|+|F_2(0,0)|\leq L_{F_2}|y|+C(1+|x|).
$$
It follows from Young's inequality that
\begin{eqnarray*}
\langle y, Ay+F^m_2(x,y)\rangle\leq -\frac{\lambda_1-L_{F_2}}{2}|y|^2+C(1+|x|^2).
\end{eqnarray*}
By condition \eref{BOG2}, it is easy to see that
\begin{eqnarray*}
\frac{d}{dt}\mathbb{E}|Y^{x,y,m}_t|^p=\!\!\!\!\!\!\!\!&&p\mathbb{E}\left[|Y^{x,y,m}_t|^{p-2}\langle Y^{x,y,m}_t,AY^{x,y,m}_t+F^m_2(x,Y^{x,y,m}_t\rangle\right]\nonumber\\
\!\!\!\!\!\!\!\!&&+\frac{p}{2}\EE\left[|Y^{x,y,m}_t|^{p-2}|G^m_2(x,Y^{x,y,m}_t)|_{HS}^2\right]\nonumber\\
&&+\frac{p(p-2)}{2}\EE\left[|Y^{x,y,m}_t|^{p-4}|G^m_2(x,Y^{x,y,m}_t)^{\ast} Y^{x,y,m}_t|^2\right]\\
\leq\!\!\!\!\!\!\!\!&& -\frac{p (\lambda_1-L_{F_2})}{2}\mathbb{E}|Y_t^{x,y,m}|^p+C_p\mathbb{E}|Y_t^{x,y,m}|^{p-2+2\zeta}+C_p(1+|x|^2)\mathbb{E}|Y_t^{x,y,m}|^{p-2}\\
\leq\!\!\!\!\!\!\!\!&& -\frac{ p(\lambda_1-L_{F_2})}{4}\mathbb{E}|Y_t^{x,y,m}|^p+C_p(1+|x|^p).
\end{eqnarray*}

Note that  $\lambda_1-L_{F_2}>0$ by condition \eref{SDC}, thus the comparison theorem yields that for any $t>0$,
\begin{eqnarray*}
\EE|Y^{x,y,m}_t|^p\leq\!\!\!\!\!\!\!\!&&e^{-\frac{p(\lambda_1-L_{F_2})t}{4}}|y|^p+C_p(1+|x|^p)\int_0^te^{-\frac{p(\lambda_1-L_{F_2})(t-s)}{4}}ds\\
\leq\!\!\!\!\!\!\!\!&& e^{-\frac{ p(\lambda_1-L_{F_2})t}{4}}|y|^p+C_p(1+|x|^p).
\end{eqnarray*}

(ii) Note that for any $t>0$  and $\eta\in (0,1)$,
\begin{eqnarray*}
\|Y^{x,y,m}_{t}\|_{\eta}\leq \!\!\!\!\!\!\!\!&&\|e^{tA}y\|_{\eta}+\int^t_0 \|e^{(t-s)A}F_2(x,Y_{s}^{x,y,m})\|_{\eta}ds\\
&&+\left|\int^t_0 (-A)^{\eta/2}e^{(t-s)A}G_2(x,Y_{s}^{x,y,m})d\bar{W}^{2}_s\right|.
\end{eqnarray*}
Then by Minkowski's inequality, Burkholder-Davis-Gundy inequality, \eref{P3} and \eref{FEq2}, we have
\begin{eqnarray*}
\left[\EE\|Y^{x,y,m}_{t}\|^p_{\eta}\right]^{1/p}
\leq \!\!\!\!\!\!\!\!&&Ct^{-\frac{\eta}{2}}e^{-\frac{\lambda_1 t}{2}}|y|+C\int^t_0(t-s)^{-\frac{\eta}{2}}e^{-\frac{\lambda_1 (t-s)}{2}}\left[ \EE(1+|x|^p+|Y_{s}^{x,y,m}|^p)\right]^{1/p}ds\\
&&+C\left[\int^t_0 (t-s)^{-\eta}e^{-\frac{\lambda_1 (t-s)}{2}}\left[\EE(1+|x|^p+|Y_{s}^{x,y,m}|^p)\right]^{2/p}ds\right]^{1/2}\\
\leq \!\!\!\!\!\!\!\!&&Ct^{-\frac{\eta}{2}}e^{-\frac{\lambda_1 t}{2}}|y|+C\Big[\sup_{s\geq 0}\left(\EE|Y_{s}^{x,y,m}|^p\right)^{1/p}+1+|x|\Big]\\
&&+\Big[\sup_{s\geq 0}\left(\EE|Y_{s}^{x,y,m}|^p\right)^{1/p}+1+|x|\Big]\\
\leq \!\!\!\!\!\!\!\!&&Ct^{-\frac{\eta}{2}}e^{-\frac{\lambda_1 t}{2}}|y|+C(1+|x|+|y|).
\end{eqnarray*}

(iii)  Note that the condition \eref{CFSUP} in the assumption \ref{A4} implies
$$
\sup_{x\in H}|F^m_2(x,y)|\leq \sup_{x\in H}|F^m_2(x,y)-F^m_2(x,0)|+\sup_{x\in H}|F^m_2(x,0)|\leq L_{F_2}|y|+C.
$$
Then it is easy to see
\begin{eqnarray*}
\langle y, Ay+F^m_2(x,y)\rangle\leq -\lambda_1|y|^2+|y|(L_{F_2}|y|+C)\leq -\frac{\lambda_1-L_{F_2}}{2}|y|^2+C.
\end{eqnarray*}
Recall the formula \eref{Ito formula}, then by Burkholder-Davis-Gundy's inequality and Young's inequality, there exists $C_p>0$ such that
\begin{eqnarray}
\mathbb{E}\left[\sup_{t\in [0,T]}|Y^{x,y,m}_t|^p\right]\nonumber
\leq\!\!\!\!\!\!\!\!&&|y|^p+C_p\mathbb{E}\int^{T}_{0}|Y^{x,y,m}_s|^{p-2}ds+C_p\mathbb{E}\Big[\int^{T}_{0}|Y^{x,y,m}_s|^{2p-2}ds\Big]^{1/2}\nonumber\\
\leq\!\!\!\!\!\!\!\!&&|y|^p+\frac{1}{2}\mathbb{E}\left[\sup_{t\in [0,T]}|Y_t^{x,y,m}|^{p}\right]+C_p T^{p/2},\label{I1}
\end{eqnarray}
which proves the estimate \eref{FEq1}.
The proof is complete.
\end{proof}

\begin{lemma}\label{L3.6}
For any $t>0$, $\eta\in (0,1)$ and $x_1,x_2\in H, y_1,y_2\in H_m$, there exists $C>0$ such that
\begin{eqnarray}
 \!\!\!\!\!\!\!\!&&(i) \sup_{m\geq 1}\EE|Y^{x_1,y_1,m}_t-Y^{x_2,y_2,m}_t|^2\!\!\leq e^{-\frac{\theta t}{2}}|y_1-y_2|^2+C|x_1-x_2|^2,\label{increase Y}\\
 \!\!\!\!\!\!\!\!&&(ii) \sup_{x\in H,m\geq 1}\left(\EE\|Y^{x,y_1,m}_t-Y^{x,y_2,m}_t\|^2_{\eta}\right)^{1/2}\!\!\leq C\left(t^{-\frac{\eta}{2}}e^{-\frac{\lambda_1 t}{2}}+e^{-\frac{\theta t}{4}}\right)|y_1-y_2|,\label{increase YH}
\end{eqnarray}
where $\theta=2\lambda_{1}-2L_{F_2}-L^2_{G_2}$ is defined in assumption \ref{A2}.
\end{lemma}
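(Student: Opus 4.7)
\textbf{Proof plan for Lemma \ref{L3.6}.}

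\emph{Part (i).} The plan is to apply It\^o's formula to $|Z_t|^2$, where $Z_t := Y^{x_1,y_1,m}_t - Y^{x_2,y_2,m}_t$. Since $Z_t$ solves an SDE on $H_m$ with drift $AZ_t + [F^m_2(x_1,Y^{x_1,y_1,m}_t)-F^m_2(x_2,Y^{x_2,y_2,m}_t)]$ and diffusion $G^m_2(x_1,Y^{x_1,y_1,m}_t)-G^m_2(x_2,Y^{x_2,y_2,m}_t)$, taking expectations and using $\langle Az,z\rangle \leq -\lambda_1|z|^2$, Assumption \ref{A1}, Young's inequality, and the elementary inequality $(a+b)^2\leq (1+\epsilon)a^2+(1+1/\epsilon)b^2$ on the Hilbert--Schmidt term yields, for any $\epsilon>0$,
\begin{equation*}
\frac{d}{dt}\EE|Z_t|^2 \leq \bigl[-2\lambda_1 + 2L_{F_2} + (1+\epsilon)L^2_{G_2} + \epsilon\bigr]\,\EE|Z_t|^2 + C_\epsilon |x_1-x_2|^2.
\end{equation*}
Choosing $\epsilon$ sufficiently small so that the bracketed coefficient is at most $-\theta/2$ (possible because $\theta>0$ by \eqref{SDC}), Gronwall's inequality then gives \eqref{increase Y}, with the $m$-independence immediate from the $m$-independent Lipschitz constants. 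The main (minor) obstacle is tracking the cross-term from $G_2$ carefully so that the final decay rate is uniform in $m$ and at least $\theta/2$.

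\emph{Part (ii).} With $x_1=x_2=x$ fixed, write the difference $Z_t:=Y^{x,y_1,m}_t-Y^{x,y_2,m}_t$ in mild form and apply $(-A)^{\eta/2}$:
\begin{equation*}
(-A)^{\eta/2}Z_t = (-A)^{\eta/2}e^{tA}(y_1-y_2) + \int_0^t (-A)^{\eta/2}e^{(t-s)A}\Delta F^m_s\,ds + \int_0^t (-A)^{\eta/2}e^{(t-s)A}\Delta G^m_s\,d\bar W^{2,m}_s,
\end{equation*}
where $\Delta F^m_s$ and $\Delta G^m_s$ denote the differences of $F^m_2$ and $G^m_2$ evaluated along the two trajectories. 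Use Minkowski's inequality in $L^2(\Omega)$ on the Bochner integral, Burkholder--Davis--Gundy on the stochastic integral, and the smoothing estimates \eqref{SP1}--\eref{P3} to bound
\begin{equation*}
(\EE\|Z_t\|_\eta^2)^{1/2} \leq Ct^{-\eta/2}e^{-\lambda_1 t/2}|y_1-y_2| + C\!\int_0^t (t-s)^{-\eta/2}e^{-\lambda_1(t-s)/2}(\EE|Z_s|^2)^{1/2}ds + C\Bigl(\!\int_0^t(t-s)^{-\eta}e^{-\lambda_1(t-s)}\EE|Z_s|^2\,ds\!\Bigr)^{1/2}\!,
\end{equation*}
where the Lipschitz bounds for $F_2$ and $G_2$ from \ref{A1} have been used.

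Invoking Part (i) (with $x_1=x_2$) yields $(\EE|Z_s|^2)^{1/2}\leq e^{-\theta s/4}|y_1-y_2|$. Substituting this and splitting each of the time integrals over $[0,t/2]$ and $[t/2,t]$ converts each integral into a sum of two terms: on $[0,t/2]$ the semigroup factor is bounded by $Ct^{-\eta/2}e^{-\lambda_1 t/4}$ and the remaining integrand (involving $e^{-\theta s/4}$) is finite; on $[t/2,t]$ the factor $e^{-\theta s/4}\leq e^{-\theta t/8}$ is pulled out and the integrand in $s$ becomes integrable after the change of variable $u=t-s$. Combining gives the claimed bound \eqref{increase YH}. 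The main technical obstacle here is the splitting argument that extracts the two distinct decay regimes $t^{-\eta/2}e^{-\lambda_1 t/2}$ and $e^{-\theta t/4}$ from a single convolution integral.
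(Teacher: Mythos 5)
Your proposal is correct and follows essentially the same route as the paper: for (i) an It\^o-formula/energy estimate with Young's inequality to produce the dissipativity rate $-\theta/2$ followed by the comparison (Gronwall) argument, and for (ii) the mild formulation with the smoothing estimate \eref{P3}, Minkowski and Burkholder--Davis--Gundy, feeding in the decay from part (i). The only difference is cosmetic: the paper absorbs the cross-terms via Young's inequality directly rather than via an explicit $(1+\epsilon)$-split, and it states the final bound on the convolution integral without writing out the $[0,t/2]$--$[t/2,t]$ splitting that you describe.
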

\begin{proof}
(i) Note that, for any $x_1,x_2\in H, y_1,y_2\in H_m$,
\begin{eqnarray*}
d(Y^{x_1,y_1,m}_t-Y^{x_2,y_2,m}_t)=\!\!\!\!\!\!\!\!&&A(Y^{x_1,y_1,m}_t-Y^{x_2,y_2,m}_t)dt+\left[F^m_2(x_1, Y^{x_1,,y_1,m}_t)-F^m_2(x_2, Y^{x_2,y_2,m}_t)\right]dt\\
\!\!\!\!\!\!\!\!&&+\left[G^m_2(x_1, Y^{x_1,y_1,m}_t)-G^m_2(x_2, Y^{x_2,y_2,m}_t)\right]d\bar{W}^{2,m}_t.
\end{eqnarray*}
Applying  It\^{o}'s formula and taking expectation, we obtain
\begin{eqnarray*}
\frac{d}{dt}\EE|Y^{x_1,y_1,m}_t-Y^{x_2,y_2,m}_t|^2=\!\!\!\!\!\!\!\!&&-2\EE\|Y^{x_1,y_1,m}_t-Y^{x_2,y_2,m}_t\|^2_1\\
&&+\EE|G^m_2(x_1,Y^{x_1,y_1,m}_t)-G^m_2(x_2,Y^{x_2,y_2,m}_t)|^2_{HS}\\
&&+2\EE\langle F^m_2( x_1,Y^{x_1,y_1,m}_t)-F^m_2(x_2, Y^{x_2,y_2,m}_t), Y^{x_1,y_1,m}_t-Y^{x_2,y_2,m}_t\rangle.
\end{eqnarray*}

Note that $\theta>0$ by condition \eref{SDC}, then by Young's inequality we get
\begin{eqnarray*}
&&\frac{d}{dt}\EE|Y^{x_1,y_1,m}_t-Y^{x_2,y_2,m}_t|^2\\
\leq\!\!\!\!\!\!\!\!&&-2\lambda_1\EE|Y^{x_1,y_1,m}_t-Y^{x_2,y_2,m}_t|^2+\EE(C|x_1-x_2|+L_{G_2}|Y^{x_1,y_1,m}_t-Y^{x_2,y_2,m}_t|)^2\\
&&+2L_{F_2}\EE\left|Y^{x_1,y_1,m}_t-Y^{x_2,y_2,m}_t\right|^2+C|x_1-x_2|\EE\left|Y^{x_1,y_1,m}_t-Y^{x_2,y_2,m}_t\right|\nonumber\\
\leq\!\!\!\!\!\!\!\!&& -\frac{\theta}{2}\EE\left|Y^{x_1,y_1,m}_t-Y^{x_2,y_2,m}_t\right|^2+C|x_1-x_2|^2.
\end{eqnarray*}
Then the comparison theorem implies that for any $t> 0$,
\begin{eqnarray}
\EE|Y^{x_1,y_1,m}_t-Y^{x_2,y_2,m}_t|^2\leq e^{-\frac{\theta t}{2}}|y_1-y_2|^2+C|x_1-x_2|^2.\label{increase Y^2}
\end{eqnarray}

(ii) For any $x\in H, y_1,y_2\in H_m$, it is easy to see that
\begin{eqnarray*}
Y^{x,y_1,m}_t-Y^{x,y_2,m}_t=\!\!\!\!\!\!\!\!&&e^{tA}(y_1-y_2)+\int^t_0 e^{(t-s)A}\left[F^m_2(x, Y^{x,y_1,m}_s)-F^m_2(x,Y^{x,y_2,m}_s)\right]ds\\
&&+\int^t_0 e^{(t-s)A}\left[G^m_2(x, Y^{x,y_1,m}_s)-G^m_2(x,Y^{x,y_2,m}_s)\right]d\bar{W}^{2,m}_s.
\end{eqnarray*}
Then by \eref{increase Y^2} and \eref{P3}, we have for any $t>0, \eta\in (0,1)$,
\begin{eqnarray}
&&\left(\EE\|Y^{x,y_1,m}_t-Y^{x,y_2,m}_t\|^2_{\eta}\right)^{1/2}\nonumber\\
\leq\!\!\!\!\!\!\!\!&&Ct^{-\frac{\eta}{2}}e^{-\frac{\lambda_1 t}{2}}|y_1-y_2|\nonumber\\
&&\!\!\!\!\!\!\!\!+\int^t_0 (t-s)^{-\eta/2}e^{-\lambda_{1}(t-s)/2}\left[\EE\left|F^m(x, Y^{x,y_1,m}_s)-F^m(x,Y^{x,y_2,m}_s)\right|^2\right]^{1/2} ds\nonumber\\
&&\!\!\!\!\!\!\!\!+\left[\int^t_0 (t-s)^{-\eta}e^{-\lambda_{1}(t-s)}\EE\left|G^m_2(x, Y^{x,y_1,m}_s)-G^m_2(x,Y^{x,y_2,m}_s)\right|^2_{HS} ds\right]^{1/2}\nonumber\\
\leq\!\!\!\!\!\!\!\!&&Ct^{-\frac{\eta}{2}}e^{-\frac{\lambda_1 t}{2}}|y_1-y_2|+C\int^t_0 (t-s)^{-\eta/2}e^{-\lambda_{1}(t-s)/2}\left[\EE|Y^{x,y_1,m}_s-Y^{x,y_2,m}_s|^2\right]^{1/2}ds\nonumber\\
&&\!\!\!\!\!\!\!\!+C\left[\int^t_0 (t-s)^{-\eta}e^{-\lambda_{1}(t-s)}\EE|Y^{x,y_1,m}_s-Y^{x,y_2,m}_s|^2ds\right]^{1/2}\nonumber\\
\leq\!\!\!\!\!\!\!\!&&Ct^{-\frac{\eta}{2}}e^{-\frac{\lambda_1 t}{2}}|y_1-y_2|+\int^t_0(t-s)^{-\eta/2}e^{-\lambda_{1}(t-s)/2}e^{-\frac{\theta s}{4}}ds|y_1-y_2|\nonumber\\
&&\!\!\!\!\!\!\!\!+C\left[\int^t_0 (t-s)^{-\eta}e^{-\lambda_{1}(t-s)} e^{-\frac{\theta s}{2}}ds\right]^{1/2}|y_1-y_2|\nonumber\\
\leq\!\!\!\!\!\!\!\!&&C\left(t^{-\frac{\eta}{2}}e^{-\frac{\lambda_1 t}{2}}+e^{-\frac{\theta t}{4}}\right)|y_1-y_2|.\nonumber
\end{eqnarray}
The proof is complete.
\end{proof}

\vspace{0.3cm}
Under the condition \eref{SDC}, it is well known that the transition semigroup $\{P^{x,m}_t\}_{t\geq 0}$ admits a unique invariant measure
$\mu^{x,m}$ (see e.g., \cite[Theorem 4.3.9]{LR}).
By \eref{FEq2} and a standard argument, it is easy to see
\begin{eqnarray}
\sup_{m\geq 1}\int_{H_m}|z|^2\mu^{x,m}(dz)\leq C(1+|x|^2).\label{E3.20}
\end{eqnarray}

Next, we shall prove the following exponential ergodicity for the transition semigroup $\{P^{x,m}_t\}_{t\geq 0}$,  which plays an important role in studying the regularity of the solution of the Poisson equation.
\begin{proposition} \label{ergodicity in finite}
(i) For any measurable function $\varphi: H\rightarrow H$ satisfying
$$
|\varphi(x)-\varphi(y)|\leq C|x-y|.
$$
Then it holds that for any $t>0$,
\begin{eqnarray}
\sup_{m\geq 1}\left| P^{x,m}_t\varphi(y)-\mu^{x,m}(\varphi)\right|\leq\!\!\!\!\!\!\!\!&& C e^{-\frac{\theta t}{4}}(1+|x|+|y|). \label{ergodicity1}
\end{eqnarray}

(ii) For any measurable function $\varphi: H^{\beta}\rightarrow H^{\alpha}$ with $\alpha\in (0,2]$ and $\beta\in (0,1)$, and satisfying
$$
\|\varphi(x)-\varphi(y)\|_{\alpha}\leq C(1+\|x\|_{\beta}+\|y\|_{\beta})\|x-y\|_{\beta}.
$$
Then it holds that for any $t>0$,
\begin{eqnarray}
\!\!\!\!\sup_{m\geq 1}\left\| P^{x,m}_t\varphi(y)-\mu^{x,m}(\varphi)\right\|_{\alpha}\!\leq\!\!\!\!\!\!\!\!&& C\!\!\left[(t^{-\beta}+t^{-\frac{\beta}{2}})e^{-\frac{\lambda_{1}t}{2}}+(t^{-\frac{\beta}{2}}+1)e^{-\frac{\theta t}{4}}\right]\!\!(1+|x|^2+|y|^2). \label{ergodicity2}
\end{eqnarray}
\end{proposition}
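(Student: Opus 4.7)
My approach is to exploit invariance of $\mu^{x,m}$ under $P_t^{x,m}$ to represent the difference $P_t^{x,m}\varphi(y)-\mu^{x,m}(\varphi)$ as the $\mu^{x,m}$-average of coupled differences $\mathbb{E}[\varphi(Y_t^{x,y,m})-\varphi(Y_t^{x,z,m})]$, and then to invoke the continuous-dependence estimates of Lemma~\ref{L3.6}, the $H^\eta$-moment bound \eref{FEqH}, and the invariant-measure moment bound \eref{E3.20}.

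For (i), I would start by writing
\[
P_t^{x,m}\varphi(y)-\mu^{x,m}(\varphi)=\int_{H_m}\mathbb{E}\bigl[\varphi(Y_t^{x,y,m})-\varphi(Y_t^{x,z,m})\bigr]\mu^{x,m}(dz),
\]
and then apply the Lipschitz hypothesis and Jensen's inequality to reduce to an $L^2$-contraction estimate
\[
|P_t^{x,m}\varphi(y)-\mu^{x,m}(\varphi)|\leq C\int_{H_m}\bigl(\mathbb{E}|Y_t^{x,y,m}-Y_t^{x,z,m}|^2\bigr)^{1/2}\mu^{x,m}(dz).
\]
Plugging in \eref{increase Y} with $x_1=x_2=x$ bounds the integrand by $Ce^{-\theta t/4}|y-z|$; combining this with $\int_{H_m}|z|\mu^{x,m}(dz)\leq C(1+|x|)$, which follows from \eref{E3.20} and Cauchy-Schwarz, yields \eref{ergodicity1}.

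Part (ii) runs analogously, but with $\|\cdot\|_\alpha$ replacing $|\cdot|$ and the quadratic growth condition replacing Lipschitz continuity:
\[
\|P_t^{x,m}\varphi(y)-\mu^{x,m}(\varphi)\|_\alpha\leq C\!\int_{H_m}\!\!\mathbb{E}\bigl[(1+\|Y_t^{x,y,m}\|_\beta+\|Y_t^{x,z,m}\|_\beta)\|Y_t^{x,y,m}-Y_t^{x,z,m}\|_\beta\bigr]\mu^{x,m}(dz).
\]
A Cauchy-Schwarz split inside the expectation separates an $H^\beta$-moment factor, bounded by \eref{FEqH} with $p=2$ by $Ct^{-\beta/2}e^{-\lambda_1 t/2}(|y|+|z|)+C(1+|x|+|y|+|z|)$, from a difference factor, bounded by \eref{increase YH} with $\eta=\beta$ by $C(t^{-\beta/2}e^{-\lambda_1 t/2}+e^{-\theta t/4})|y-z|$. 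The product expands into four cross-terms; integration in $z$ (using \eref{E3.20} to control $\int|z|\mu^{x,m}(dz)$ and $\int|z|^2\mu^{x,m}(dz)$, together with $\int|y-z|^2\mu^{x,m}(dz)\leq 2|y|^2+C(1+|x|^2)$) absorbs all spatial dependence into the factor $1+|x|^2+|y|^2$.

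The only real obstacle is the bookkeeping for the time-dependent factors in the four cross-terms. After trivial simplifications such as $e^{-\lambda_1 t}\leq e^{-\lambda_1 t/2}$ and $e^{-\lambda_1 t/2}e^{-\theta t/4}\leq e^{-\theta t/4}$, each contribution is dominated by one of $t^{-\beta}e^{-\lambda_1 t/2}$, $t^{-\beta/2}e^{-\lambda_1 t/2}$, $t^{-\beta/2}e^{-\theta t/4}$, or $e^{-\theta t/4}$, and these four terms assemble into precisely the bound stated in \eref{ergodicity2}. No new analytic ingredient is needed beyond Lemmas \ref{L3.2}, \ref{L3.6} and the invariance and second-moment bound of $\mu^{x,m}$.
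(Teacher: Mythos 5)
Your proposal is correct and follows essentially the same route as the paper: both represent $P^{x,m}_t\varphi(y)-\mu^{x,m}(\varphi)$ via invariance as a $\mu^{x,m}$-average of coupled differences, reduce to the $L^2$ contraction estimates \eref{increase Y} and \eref{increase YH} together with the moment bounds \eref{FEqH} and \eref{E3.20}, and assemble the time-dependent factors in the same way. No substantive difference to report.
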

\begin{proof}
(i) By the definition of the invariant measure $\mu^{x,m}$ and \eref{increase Y},  we have for any $t> 0$,
\begin{eqnarray*}
\left| P^{x,m}_t\varphi(y)-\mu^{x,m}(\varphi)\right|=\!\!\!\!\!\!\!\!&&\left| \EE \varphi( Y^{x,y,m}_t)-\int_{H_m}\varphi(z)\mu^{x,m}(dz)\right|\\
\leq\!\!\!\!\!\!\!\!&& \left|\int_{H_m}\left[\EE \varphi(Y^{x,y,m}_t)-\EE \varphi(Y^{x,z,m}_t)\right]\mu^{x,m}(dz)\right|\\
\leq\!\!\!\!\!\!\!\!&& \|\varphi\|_{Lip}\int_{H_m} \EE\left| Y^{x,y,m}_t-Y^{x,z,m}_t\right|\mu^{x,m}(dz)\\
\leq\!\!\!\!\!\!\!\!&& \|\varphi\|_{Lip} e^{-\frac{\theta t}{4}}\int_{H_m}|y-z|\mu^{x,m}(dz)\\
\leq\!\!\!\!\!\!\!\!&& \|\varphi\|_{Lip}e^{-\frac{\theta t}{4}}\left[|y|+\int_{H_m}|z|\mu^{x,m}(dz)\right]\\
\leq\!\!\!\!\!\!\!\!&& C \|\varphi\|_{Lip}e^{-\frac{\theta t}{4}}(1+|x|+|y|),
\end{eqnarray*}
where the last inequality is a consequence of \eref{E3.20}.

(ii) By \eref{FEqH}, \eref{increase YH} and following an  argument similar to that in the proof of \eref{ergodicity1}, we have for any $t> 0$ and $m\geq 1$,
\begin{eqnarray*}
&&\left\| P^{x,m}_t\varphi(y)-\mu^{x,m}(\varphi)\right\|_{\alpha}=\left\| \EE \varphi( Y^{x,y,m}_t)-\int_{H_m}\varphi(z)\mu^{x,m}(dz)\right\|_{\alpha}\\
\leq\!\!\!\!\!\!\!\!&&\int_{H_m}\left\|\EE \varphi(Y^{x,y,m}_t)-\EE \varphi(Y^{x,z,m}_t)\right\|_{\alpha}\mu^{x,m}(dz)\\
\leq\!\!\!\!\!\!\!\!&& C\int_{H_m} \EE\left[(1+\| Y^{x,y,m}_t\|_{\beta}+\| Y^{x,z,m}_t\|_{\beta})\left\| Y^{x,y,m}_t-Y^{x,z,m}_t\right\|_{\beta}\right]\mu^{x,m}(dz)\\
\leq\!\!\!\!\!\!\!\!&& C\int_{H_m} \left[\EE(1+\| Y^{x,y,m}_t\|^2_{\beta}+\| Y^{x,z,m}_t\|^2_{\beta})\right]^{1/2}\left(\EE\| Y^{x,y,m}_t-Y^{x,z,m}_t\|^2_{\beta}\right)^{1/2}\mu^{x,m}(dz)\\
\leq\!\!\!\!\!\!\!\!&& C\left(t^{-\beta}e^{-\frac{\lambda_{1}t}{2}}+t^{-\frac{\beta}{2}}e^{-\frac{\theta t}{4}}\right)\int_{H_m}|y-z|(|y|+|z|)\mu^{x,m}(dz)\\
&&+ C\left[t^{-\frac{\beta}{2}}e^{-\frac{\lambda_{1}t}{2}}+e^{-\frac{\theta t}{4}}\right]\int_{H_m}|y-z|(1+|x|+|y|+|z|)\mu^{x,m}(dz).
\end{eqnarray*}
Thus it is easy to see that
$$
\left\| P^{x,m}_t\varphi(y)-\mu^{x,m}(\varphi)\right\|_{\alpha}\leq C\left[(t^{-\beta}+t^{-\frac{\beta}{2}})e^{-\frac{\lambda_{1}t}{2}}+(t^{-\frac{\beta}{2}}+1)e^{-\frac{\theta t}{4}}\right](1+|x|^2+|y|^2).
$$
The proof is complete.
\end{proof}

\subsection{The regularity of solution of the Poisson equation}
In this subsection devotes to study the following Poisson equation:
\begin{equation}
-\mathscr{L}^m_{2}(x)\Phi_m(x,y)=F^m_1(x,y)-\bar{F}^m_1(x),\quad x,y\in H_m,\label{PE2}
\end{equation}
where $\mathscr{L}^m_{2}(x)$ is the infinitesimal generator of the transition semigroup of the finite dimensional frozen equation \eref{Ga FZE}, i.e.,
\begin{eqnarray}
\mathscr{L}^m_{2}(x)\Phi_m(x,y)=\!\!\!\!\!\!\!\!&& D_y\Phi_m(x,y)\cdot \left[Ay+F^m_2(x,y)\right] \nonumber\\
&&+\frac{1}{2}\sum^m_{k=1}\left[D_{yy}\Phi_m(x,y)\cdot (G^m_2(x,y)e_k,G^m_2(x,y)e_k)\right].\label{L_2}
\end{eqnarray}
The regularity of the solution of the Poisson equation has been studied by many people, see e.g. \cite{PV1,PV2,RSX,RSX2,RXY,SXX}.
\begin{proposition}\label{P3.6}
Under the assumption \ref{A1}-\ref{A3}. Define
\begin{eqnarray}
\Phi_m(x,y):=\int^{\infty}_{0}\left[\EE F^m_1(x,Y^{x,y,m}_t)-\bar{F}^m_1(x)\right]dt.\label{SPE}
\end{eqnarray}
Then $\Phi_m(x,y)$ is a solution of equation \eref{PE2}, and there exists $C>0$ such that for any $x,y,h,k\in H_m$,
\begin{eqnarray}
&&\sup_{m\geq1}|\Phi_m(x,y)|\leq C(1+|x|+|y|),\quad \sup_{x,y\in H_m,m\geq 1}|D_y \Phi_m(x,y)\cdot h|\leq C|h|; \label{E1}\\
&&\sup_{m\geq 1}|D_x \Phi_m(x,y)\cdot h|\leq C(1+|x|+|y|)|h|;\label{E2}\\
&& \sup_{m\geq 1}| D_{xx}\Phi_m(x, y)\cdot(h,k)|\leq C(1+|x|+|y|)|h|\|k\|_{\delta},\label{E3}
\end{eqnarray}
where $\delta\in (0,1)$ is the constant in assumption \ref{A3}. Furthermore,  if condition \eref{BH} also holds, then we have
\begin{eqnarray}
\sup_{,m\geq 1}\|\Phi_m(x,y)\|_{\alpha}\leq C(1+|x|^2+|y|^2).\label{E4}
\end{eqnarray}
\end{proposition}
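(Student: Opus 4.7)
My plan is to carry the four assertions through in sequence, using the exponential ergodicity from Proposition~\ref{ergodicity in finite} as the main engine. First I would verify the well-definedness of the integral \eqref{SPE} and the sup-bound in \eqref{E1}. Under \ref{A1}, the map $z \mapsto F_1^m(x, z)$ is Lipschitz with constant uniform in $x$ and $m$, so Proposition~\ref{ergodicity in finite}(i) applied with $\varphi(\cdot) = F_1^m(x, \cdot)$ gives
\[
\bigl|\mathbb{E} F_1^m(x, Y_t^{x,y,m}) - \bar F_1^m(x)\bigr| \leq C e^{-\theta t/4}(1 + |x| + |y|),
\]
which is integrable in $t$ and yields $|\Phi_m(x,y)| \leq C(1+|x|+|y|)$. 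To verify the Poisson equation I would note that $u(t,x,y) := P_t^{x,m}[F_1^m(x,\cdot)](y) - \bar F_1^m(x)$ solves $\partial_t u = \mathscr{L}_2^m(x) u$ with $u(0,\cdot) = F_1^m(x,\cdot) - \bar F_1^m(x)$ and decays exponentially, so integrating in $t$ and interchanging $\mathscr{L}_2^m(x)$ with the integral yields $-\mathscr{L}_2^m(x)\Phi_m(x,y) = F_1^m(x,y) - \bar F_1^m(x)$.

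For the $y$-derivative bound in \eqref{E1}, I would differentiate under the integral, introducing the first variation process $\eta_t := D_y Y_t^{x,y,m}\cdot h$, which satisfies a linear SDE whose coefficients have Jacobians controlled by \ref{A1} and \ref{A3}. Taking $x_1=x_2$ in \eqref{increase Y} and passing to the limit produces $(\mathbb{E}|\eta_t|^2)^{1/2} \leq e^{-\theta t/4}|h|$. Combining this with $|D_y F_1^m(x,y)\cdot \eta_t| \leq C|\eta_t|$ from \ref{A3} and integrating gives $|D_y \Phi_m(x,y)\cdot h| \leq C|h|$.

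The $x$-derivative bounds \eqref{E2} and \eqref{E3} are the delicate part. The first and second variations $\xi_t := D_x Y_t^{x,y,m}\cdot h$ and $\zeta_t := D_{xx} Y_t^{x,y,m}\cdot(h,k)$ do \emph{not} decay in time; however, \eqref{increase Y} with $y_1=y_2$ yields $\mathbb{E}|\xi_t|^2 \leq C|h|^2$ uniformly in $t$, and by the same route an analogous uniform bound for $\zeta_t$ on the scale $|h|\|k\|_\delta$ (where the $H^\delta$-regularity is precisely what the second-order bounds in \ref{A3} deliver). Differentiating the identity $\Phi_m(x,y) = \int_0^\infty[\mathbb{E}F_1^m(x, Y_t^{x,y,m}) - \bar F_1^m(x)]\,dt$ naively produces individually divergent pieces, so the key step is to differentiate the invariance identity $\bar F_1^m(x) = \int_{H_m} F_1^m(x,z)\,\mu^{x,m}(dz)$ in $x$ in parallel, which re-centers each term into a quantity of the form ``$\mathbb{E}[\psi(Y_t^{x,y,m})] - \int\psi\,d\mu^{x,m}$'' for a Lipschitz $\psi$; Proposition~\ref{ergodicity in finite}(i) then gives exponential decay of each integrand, and the moment bound \eqref{FEq2} supplies the $(1+|x|+|y|)$ prefactor. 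The main technical obstacle is carrying out this re-centering cleanly for the second variation $\zeta_t$ and keeping track of which factors require the $\|\cdot\|_\delta$ norm versus the $|\cdot|$ norm, as dictated by the mixed third-order estimates in \ref{A3}; once assembled, this yields \eqref{E2} and \eqref{E3}.

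Finally, for the $H^\alpha$-estimate \eqref{E4}, condition \eqref{BH} says that $z \mapsto F_1^m(x,z)$ is $H^\alpha$-valued and $H^\beta$-to-$H^\alpha$ Lipschitz with the stated polynomial growth. Proposition~\ref{ergodicity in finite}(ii) then yields
\[
\|\mathbb{E}F_1^m(x, Y_t^{x,y,m}) - \bar F_1^m(x)\|_\alpha \leq C\bigl[(t^{-\beta}+t^{-\beta/2})e^{-\lambda_1 t/2} + (t^{-\beta/2}+1)e^{-\theta t/4}\bigr](1+|x|^2+|y|^2),
\]
whose right-hand side is integrable on $(0,\infty)$ since $\beta < 1$, and integrating produces $\|\Phi_m(x,y)\|_\alpha \leq C(1+|x|^2+|y|^2)$, which is \eqref{E4}.
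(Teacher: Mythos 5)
Your treatment of the well-definedness of \eref{SPE}, of both bounds in \eref{E1}, and of \eref{E4} coincides with the paper's: Proposition \ref{ergodicity in finite}(i) gives the zeroth-order bound, the exponentially decaying first variation $D^h_yY^{x,y,m}_t$ gives the $y$-derivative bound, and Proposition \ref{ergodicity in finite}(ii) with \eref{BH} gives \eref{E4}. The gap is in your plan for \eref{E2} and \eref{E3}, which is the hard part. You propose to differentiate the invariance identity $\bar F^m_1(x)=\int_{H_m}F^m_1(x,z)\,\mu^{x,m}(dz)$ in $x$ "in parallel" so that every term becomes $\EE\,\psi(Y^{x,y,m}_t)-\mu^{x,m}(\psi)$ for a Lipschitz $\psi$. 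Two things go wrong. First, this requires differentiability of $x\mapsto\mu^{x,m}$ and quantitative control of that derivative, which is nowhere established and is essentially as hard as the estimate you are trying to prove; the paper never touches $D_x\mu^{x,m}$. Second, the re-centering does not actually produce terms of the claimed form: differentiating $\EE F^m_1(x,Y^{x,y,m}_t)$ in $x$ yields, among others, $\EE\bigl[D_yF^m_1(x,Y^{x,y,m}_t)\cdot D^h_xY^{x,y,m}_t\bigr]$, where $D^h_xY^{x,y,m}_t$ is a non-decaying functional of the whole path rather than a function of $Y^{x,y,m}_t$ alone, so there is no candidate $\mu^{x,m}(\psi)$ to subtract and Proposition \ref{ergodicity in finite}(i) does not apply to it.

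The device you are missing is the paper's truncation-plus-Markov-property argument. Set $\hat F^m_1(x,y,t):=\EE F^m_1(x,Y^{x,y,m}_t)$ and $\tilde F^m_{t_0}(x,y,t):=\hat F^m_1(x,y,t)-\hat F^m_1(x,y,t+t_0)$; by the Markov property the subtracted term equals $\EE\,\hat F^m_1(x,Y^{x,y,m}_{t_0},t)$, and differentiating the difference in $x$ produces (a) an increment of $D_x\hat F^m_1(x,\cdot,t)$ between the arguments $y$ and $Y^{x,y,m}_{t_0}$, controlled by the mixed derivative $D_{xy}\hat F^m_1(x,\cdot,t)$, and (b) the term $\EE\bigl[D_y\hat F^m_1(x,Y^{x,y,m}_{t_0},t)\cdot D^h_xY^{x,y,m}_{t_0}\bigr]$. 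Both decay like $e^{-\theta t/8}(1+t^{-\delta/2})$ uniformly in $t_0$, because the $y$-variations and the mixed variations $D^{(h,k)}_{xy}Y^{x,y,m}_t$ decay exponentially (with the $H^{\delta}$ smoothing estimates supplying the integrable singularity $t^{-\delta/2}$); letting $t_0\to\infty$ gives \eref{E2}, and iterating the scheme with the decomposition into $J_1,\dots,J_4$ and the third-order bounds \eref{ThirdDer F1}--\eref{ThirdDer F3} gives \eref{E3}. A further small correction: the uniform bound on the second variation is $\EE|D^{(h,k)}_{xx}Y^{x,y,m}_t|^2\le C|h|^2|k|^2$, not a bound on the scale $|h|\,\|k\|_{\delta}$; the $\|k\|_{\delta}$ appearing in \eref{E3} enters through the coefficient bounds of assumption \ref{A3}, not through the variation process.
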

\begin{proof}
The proof is divided into three steps:

\textbf{Step 1.}
Recall that $\mathscr{L}^m_{2}(x)$ is the infinitesimal generator of the transition semigroup of the frozen process $\{Y^{x,y,m}_t\}_{t\geq 0}$. It is easy to check \eref{SPE} is a solution of the Poisson equation \eref{PE2} by a standard argument. Hence, we only prove \eref{E1}-\eref{E4}.

By (i) of Proposition \ref{ergodicity in finite},
we get for any $x\in H_m$,
\begin{eqnarray*}
|\Phi_m(x,y)|\leq\!\!\!\!\!\!\!\!&&\int^{\infty}_{0}|\EE F^m_1(x,Y^{x,y,m}_t)-\bar{F}^m_1(x)|dt\\
\leq\!\!\!\!\!\!\!\!&& C(1+|x|+|y|)\int^{\infty}_{0}e^{-\frac{\theta t}{4}}dt\leq C(1+|x|+|y|).
\end{eqnarray*}
So the first estimate in \eref{E1} holds.

By (ii) of Proposition \ref{ergodicity in finite} and \eref{BH}, we get for any $x\in H$,
\begin{eqnarray*}
\|\Phi_m(x,y)\|_{\alpha}\leq\!\!\!\!\!\!\!\!&&\int^{\infty}_{0}\| \EE[F^m_1(x,Y^{x,y}_t)]-\bar{F}^m_1(x)\|_{\alpha}dt\\
\leq\!\!\!\!\!\!\!\!&& C(1+|x|^2+|y|^2)\int^{\infty}_{0}\left[(t^{-\beta}+t^{-\frac{\beta}{2}})e^{-\frac{\lambda_{1}t}{2}}+(t^{-\frac{\beta}{2}}+1)e^{-\frac{\theta t}{4}}\right]dt\\
\leq\!\!\!\!\!\!\!\!&&C(1+|x|^2+|y|^2).
\end{eqnarray*}
So \eref{E4} holds.

For any $h\in H_m$, we have
$$D_y \Phi_m(x,y)\cdot h=\int^{\infty}_0 \EE[D_y F^m_1(x,Y^{x,y,m}_t)\cdot D^h_y Y^{x,y,m}_t]dt,$$
where $D^h_{y} Y^{x,y,m}_t$ is the derivative of $Y^{x,y,m}_t$ with respect to $y$ in the direction $h$, which satisfies
 \begin{equation}\left\{\begin{array}{l}\label{partial y}
\displaystyle
dD^h_{y} Y^{x,y,m}_t=AD^h_{y} Y^{x,y,m}_tdt+D_y F^m_2(x,Y^{x,y,m}_{t})\cdot D^h_{y} Y^{x,y,m}_tdt\nonumber\\
\quad\quad\quad\quad\quad\quad+D_yG^m_2(x,Y^{x,y,m}_t)\cdot D^h_{y} Y^{x,y,m}_tdW^{2}_t,\\
D^h_{y} Y^{x,y,m}_0=h.\\
\end{array}\right.
\end{equation}
It is easy to check that
\begin{eqnarray}
\sup_{x,y\in H_m}\EE|D^h_y Y^{x,y,m}_t|^2\leq e^{-\frac{\theta t}{2}}|h|^2\label{partial yY}
\end{eqnarray}
and for any $\eta\in (0,1)$ and $t>0$,
\begin{eqnarray}
\sup_{x,y\in H_m} \EE\|D^h_{y} Y^{x,y,m}_t\|^2_{\eta}\leq Ce^{-\frac{\theta t}{2}} (1+t^{-\eta})|h|^2.\label{Hpartial yY}
\end{eqnarray}
Thus it follows
$$\sup_{x,y\in H_m}|D_y \Phi_m(x,y)\cdot h|\leq C|h|.$$
So the second estimate in \eref{E1} holds.

Now, we define
\begin{eqnarray*}
\tilde{F}^m_{t_0}(x, y, t):=\hat F^m_1(x,y, t)-\hat{F}^m_1(x, y, t+t_0),
\end{eqnarray*}
where $\hat{F}^m_1(x, y, t):=\EE F^m_1(x, Y^{x,y,m}_t)$. Note that \eref{ergodicity1} implies
\begin{eqnarray*}
\lim_{t_0\rightarrow \infty} \tilde{F}^m_{t_0}(x, y, t)=\EE[F^m_1(x,Y^{x,y,m}_t)]-\bar{F}^m_1(x).
\end{eqnarray*}
So in order to prove the estimates  \eref{E2} and \eref{E3},  it suffices to show that there exists $C>0$ such that for any $t_0>0$, $ t>0$, $x,y\in H_m$,
\begin{eqnarray}
&&|D_{x}\tilde{F}^m_{t_0}(x,y,t)\cdot h|\leq C e^{-\frac{\theta t}{8}}(1+t^{-\delta/2})(1+|x|+|y|)|h|,\label{E21}\\
&&\left|D_{xx} \tilde{F}^m_{t_0}(x,y,t)\cdot(h,k)\right|\leq Ce^{-\frac{\theta t}{8}}(1+t^{-\delta/2})(1+|x|+|y|)|h||k|\label{E22},
\end{eqnarray}
which will be proved in Steps 2 and 3 respectively.
\vspace{0.3cm}

\textbf{Step 2.}
In this step, we prove \eref{E21}. It follows from the Markov property that
\begin{eqnarray*}
\tilde{F}^m_{t_0}(x,y, t)=\!\!\!\!\!\!\!\!&& \hat F^m_1(x, y, t)-\EE F^m_1(x, Y^{x,y,m}_{t+t_0})\nonumber\\
=\!\!\!\!\!\!\!\!&& \hat F^m_1(x,y, t)-\EE \{\EE[F^m_1(x,Y^{x,y,m}_{t+t_0})|\mathscr{F}_{t_0}]\}\nonumber\\
=\!\!\!\!\!\!\!\!&& \hat F^m_1(x, y, t)-\EE \hat F^m_1(x, Y^{x,y,m}_{t_0},t).
\end{eqnarray*}
Thus
\begin{eqnarray}
D_{x}\tilde{F}^m_{t_0}(x,y, t)\cdot h=\!\!\!\!\!\!\!\!&& D_{x} \hat F^m_1(x, y, t)\cdot h-\EE D_{x}\hat F^m_1(x, Y^{x,y,m}_{t_0},t)\cdot h\nonumber\\
&&- \EE \left[D_y\hat F^m_1(x,Y^{x,y,m}_{t_0},t)\cdot D^h_{x} Y^{x,y,m}_{t_0} \right],\label{5.8}
\end{eqnarray}
where $D^h_{x} Y^{x,y,m}_t$ is the derivative of $Y^{x,y,m}_t$ with respect to $x$ in the direction $h$, which satisfies
 \begin{equation}\left\{\begin{array}{l}
\displaystyle
dD^h_{x} Y^{x,y,m}_t=AD^h_{x} Y^{x,y,m}_tdt+\left[D_{x} F^m_2(x,Y^{x,y,m}_{t})\cdot h+D_y F^m_2(x,Y^{x,y,m}_{t})\cdot D^h_{x} Y^{x,y,m}_t\right]dt\nonumber\\
\quad\quad\quad\quad\quad +\left[D_xG^m_2(x,Y^{x,y,m}_t)\cdot h+D_yG^m_2(x,Y^{x,y,m}_t)\cdot D^h_{x} Y^{x,y,m}_t\right]dW^{2}_t\nonumber\\
D^h_{x} Y^{x,y,m}_0=0.\nonumber
\end{array}\right.
\end{equation}
It is easy to check that
\begin{eqnarray}
&&\sup_{t\geq 0, x,y\in H_m} \EE|D^h_{x} Y^{x,y,m}_t|^2\leq C|h|^2\label{S0}
\end{eqnarray}
and  for any $\eta\in (0,1)$,
\begin{eqnarray}
\sup_{t\geq 0, x,y\in H_m} \EE\|D^h_{x} Y^{x,y,m}_t\|^2_{\eta}\leq C|h|^2.\label{HS0}
\end{eqnarray}

Note that $D_y\hat F^m_1(x, y, t)\cdot h=\EE\left[D_y F^m_1(x, Y^{x,y,m}_t)\cdot D^h_y Y^{x,y,m}_t\right]$. By \eref{partial yY} we have
\begin{eqnarray}
\sup_{x, y\in H_m}|D_y\hat F^m_1(x, y, t)\cdot h|\leq\!\!\!\!\!\!\!\!&&Ce^{-\frac{\theta t}{4}}|h|.\label{S1}
\end{eqnarray}

Next, if we can prove that there exists $C>0$ such that for any $t>0,x,y,h,k\in H_m$,
\begin{equation}\label{rs}
|D_{xy}\hat F^m_1(x,y, t)\cdot (h,k)|\leq C e^{-\frac{\theta t}{8}}(1+t^{-\delta/2})|h||k|,
\end{equation}
then we will have
\begin{eqnarray}
&&|D_{x} \hat F^m_1(x, y, t)\cdot h-\EE D_{x}\hat F^m_1(x, Y^{x,y,m}_{t_0},t)\cdot h|\nonumber\\
\leq\!\!\!\!\!\!\!\!&& \EE\left |\int^1_0 D_{xy}\hat F^m_1( x, \xi y+(1-\xi)Y^{x,y,m}_{t_0}, t)\cdot (h, y-Y^{x,y,m}_{t_0})d\xi\right|\nonumber\\
\leq\!\!\!\!\!\!\!\!&& C e^{-\frac{\theta t}{8}}(1+t^{-\delta/2})\EE|y-Y^{x,y,m}_{t_0}||h|.\label{S2}
\end{eqnarray}
Then by estimates \eref{S0}-\eref{S2} and \eref{FEq2}, we have
\begin{eqnarray*}
|D_{x}\tilde{F}^m_{t_0}( x, y, t)\cdot h|
\leq\!\!\!\!\!\!\!\!&& C e^{-\frac{\theta t}{8}}(1+t^{-\delta/2})\EE|y-Y^{x,y,m}_{t_0}||h|\\
\leq\!\!\!\!\!\!\!\!&&Ce^{-\frac{\theta t}{8}}(1+t^{-\delta/2})(1+|x|+|y|)|h|.
\end{eqnarray*}
Hence, \eref{E21} is proved.

Now we prove \eqref{rs}.
Note that
\begin{eqnarray*}
&&|D_{xy}\hat F^m_1(x,y, t)\cdot (h,k)|=|D_{xy} \left[\EE F^m_1(x, Y^{x,y,m}_t)\right]\cdot (h,k)|\nonumber\\
=\!\!\!\!\!\!\!\!&& |D_y\left[\EE D_{x} F^m_1(x, Y^{x,y,m}_t)+\EE D_{y} F^m_1(x, Y^{x,y,m}_t)\cdot D_{x} Y^{x,y,m}_t\right]\cdot (h,k)|\\
\leq\!\!\!\!\!\!\!\!&& \left|\EE \left[D_{xy} F^m_1(x, Y^{x,y,m}_t)\cdot(D^h_{y}Y^{x,y,m}_t,k)\right]\right|+\left|\EE \left[D_{y} F^m_1(x, Y^{x,y,m}_t)\cdot D^{(h,k)}_{xy} Y^{x,y,m}_t\right]\right|\\
&&+\left|\EE \left[D_{yy} F^m_1(x, Y^{x,y,m}_t)\cdot(D^h_{x} Y^{x,y,m}_t, D^k_{y} Y^{x,y,m}_t)\right]\right|,
\end{eqnarray*}
where $D^{(h,k)}_{xy}Y^{x,y,m}_t$ is the second
order derivative of $Y^{x,y,m}_{t}$ (once with respect to $x$ in the direction $h\in H_m$ and once with respect to $y$ in the direction $k\in H_m$),
which satisfies
 \begin{equation}\left\{\begin{array}{l}\label{Sxy DY}
\displaystyle
d D^{(h,k)}_{xy}Y^{x,y,m}_{t}=\left[AD^{(h,k)}_{xy}Y^{x,y,m}_{t}+D_{xy}F^m_2(x,Y^{x,y,m}_{t})\cdot (h,D^k_{y} Y^{x,y,m}_{t})\right.\nonumber\\
\quad\quad\quad+\left.D_{yy}F^m_2(x,Y^{x,y,m}_{t})\cdot (D^{h}_{x}Y^{x,y,m}_{t},D^{k}_{y}Y^{x,y,m}_{t})+D_{y}F^m_2(x,Y^{x,y,m}_{t})\cdot D^{(h,k)}_{xy}Y^{x,y,m}_{t}\right]dt\nonumber\\
\quad\quad\quad+\left[D_{xy}G^m_2(x,Y^{x,y,m}_{t})\cdot (h,D^k_{y} Y^{x,y,m}_{t})+D_{yy}G^m_2(x,Y^{x,y,m}_{t})\cdot (D^{h}_{x}Y^{x,y,m}_{t},D^{k}_{y}Y^{x,y,m}_{t})\right.\nonumber\\
\quad\quad\quad+\left.D_{y}G^m_2(x,Y^{x,y,m}_{t})\cdot D^{(h,k)}_{xy}Y^{x,y,m}_{t}\right]dW^2_t,\nonumber\\
D^{(h,k)}_{xy}Y^{x,y,m}_{0}=0.\\
\end{array}\right.
\end{equation}
By assumption \ref{A3}, \eref{Hpartial yY}, \eref{HS0} and a straightforward computation, it is easy to prove for any $t>0,h,k\in H_m$,
\begin{eqnarray}
\sup_{x,y\in H_m}\EE|D^{(h,k)}_{xy}Y^{x,y,m}_t|^2\leq Ce^{-\frac{\theta t}{4}}\left(t^{-\delta}+1\right)|h|^2|k|^2,\label{partial xyY}
\end{eqnarray}
which together with \eref{partial yY} and \eref{S0},
we can easily obtain \eref{rs}.

\textbf{Step 3.}
In this step, we prove \eref{E22}. Recall that
\begin{eqnarray*}
D_{x}\tilde{F}^m_{t_0}(x,y, t)\cdot h=\!\!\!\!\!\!\!\!&& D_{x} \hat F^m_1(x, y, t)\cdot h-\EE D_{x}\hat F^m_1(x, Y^{x,y,m}_{t_0},t)\cdot h\\
&&- \EE \left[D_y\hat F^m_1(x,Y^{x,y,m}_{t_0},t)\cdot D^h_{x} Y^{x,y,m}_{t_0} \right].
\end{eqnarray*}
Then it is easy to see
\begin{eqnarray*}
&&D_{xx}\tilde{F}^m_{t_0}(x,y, t)\cdot (h,k)\\
=\!\!\!\!\!\!\!\!&& \left[D_{xx} \hat F^m_1(x, y, t)\cdot(h,k)-\EE D_{xx}\hat F^m_1(x, Y^{x,y,m}_{t_0},t)\cdot(h,k)\right]\nonumber\\
&&-\EE \left[D_{xy}\hat F^m_1(x,Y^{x,y,m}_{t_0},t)\cdot (h, D^k_{x} Y^{x,y,m}_{t_0})+D_{yx}\hat F^m_1(x, Y^{x,y,m}_{t_0},t)\cdot(D^h_{x} Y^{x,y,m}_{t_0},k)\right]\\
&&-\EE\left[D_{yy}\hat F^m_1(x, Y^{x,y,m}_{t_0},t) \cdot (D^h_{x} Y^{x,y,m}_{t_0},D^k_{x} Y^{x,y,m}_{t_0})\right]\\
&&-\EE \left[ D_{y}\hat F^m_1(x,Y^{x,y,m}_{t_0},t) \cdot D^{(h,k)}_{xx} Y^{x,y,m}_{t_0} \right]\\
:=\!\!\!\!\!\!\!\!&&\sum^4_{i=1} J_i,
\end{eqnarray*}
where $D^{(h,k)}_{xx}Y^{x,y,m}_{t}$ is the
second order derivative of $Y^{x,y,m}_{t}$ with respect to $x$ (once in the direction
$h\in H_m$ and once in the direction
$k\in H_m$),
which satisfies
\begin{equation}\left\{\begin{array}{l}
\displaystyle
d D^{(h,k)}_{xx}Y^{x,y,m}_{t}\!\!\!=\!\!\left[AD^{(h,k)}_{xx}Y^{x,y,m}_{t}\!\!\!+\!D_{xx}F^m_2(x,Y^{x,y,m}_{t})\cdot(h,k)\!\!\!+\!D_{xy}F^m_2(x,Y^{x,y,m}_{t})\cdot (h,D^k_{x} Y^{x,y,m}_{t})\right.\nonumber\\
\quad\quad+D_{yx}F^m_2(x,Y^{x,y,m}_{t})\cdot(D^h_{x} Y^{x,y,m}_{t},k)+D_{yy}F^m_2(x,Y^{x,y,m}_{t})\cdot (D^{h}_{x}Y^{x,y,m}_{t},D^{k}_{x}Y^{x,y,m}_{t})\nonumber\\
\quad\quad+\left.D_{y}F^m_2(x,Y^{x,y,m}_{t})\cdot D^{(h,k)}_{xx}Y^{x,y,m}_{t}\right]dt+\left[D_{xx}G^m_2(x,Y^{x,y,m}_{t})\cdot(h,k)\right.\nonumber\\
\quad\quad+D_{xy}G^m_2(x,Y^{x,y,m}_{t})\cdot (h,D^k_{x} Y^{x,y,m}_{t})+D_{yx}G^m_2(x,Y^{x,y,m}_{t})\cdot(D^h_{x} Y^{x,y,m}_{t},k)\nonumber\\
\quad\quad\left.+D_{yy}G^m_2(x,Y^{x,y,m}_{t})\cdot (D^{h}_{x}Y^{x,y,m}_{t},D^{k}_{x}Y^{x,y,m}_{t})+D_{y}G^m_2(x,Y^{x,y,m}_{t})\cdot D^{(h,k)}_{xx}Y^{x,y,m}_{t}\right]dW^2_t,\nonumber\\
D^{(h,k)}_{xx}Y^{x,y,m}_{0}=0.\label{SDY}\nonumber\\
\end{array}\right.
\end{equation}

For the term $J_1$, note that
\begin{eqnarray*}
D_{x}\hat F^m_1(x,y, t)\cdot h=\!\!\!\!\!\!\!\!&&\EE \left[D_x F^m_1(x, Y^{x,y,m}_{t})\cdot h+D_{y}F^m_1(x,Y^{x,y,m}_{t})\cdot D^h_{x}Y^{x,y,m}_t \right],
\end{eqnarray*}
which implies
\begin{eqnarray*}
D_{xx}\hat F^m_1(x,y, t)\cdot(h,k)=\!\!\!\!\!\!\!\!&&\EE \left[D_{xx}F^m_1(x,Y^{x,y,m}_t)\cdot(h,k)+D_{xy} F^m_1(x,Y^{x,y,m}_t) \cdot
(h,D^k_x Y^{x,y,m}_t)\right.\nonumber\\
&&+D_{yx} F^m_1(x, Y^{x,y,m}_t)\cdot (D^h_{x} Y^{x,y,m}_{t},k)\\
&&+D_{yy} F^m_1(x, Y^{x,y,m}_t)\cdot(D^h_{x} Y^{x,y,m}_{t}, D^k_{x} Y^{x,y,m}_{t})\\
&&+ \left.D_{y} F^m_1(x, Y^{x,y,m}_t)\cdot D^{(h,k)}_{xx}Y^{x,y,m}_{t} \right].
\end{eqnarray*}
Then it follows
\begin{eqnarray*}
&&D_{xxy}\hat F^m_1(x,y, t)\cdot(h,k,l)\\
=\!\!\!\!\!\!\!\!&& \EE \Big[D_{xxy}F^m_1(x,Y^{x,y,m}_t)\cdot(h,k,D^l_{y} Y^{x,y,m}_{t})+D_{xyy} F^m_1(x,Y^{x,y,m}_t) \cdot
(h,D^k_x Y^{x,y,m}_t,D^l_{y} Y^{x,y,m}_{t})\nonumber\\
&&\!\!\!\!\!\!\!\!+D_{xy} F^m_1(x,Y^{x,y,m}_t) \cdot
(h,D^{(k,l)}_{xy} Y^{x,y,m}_t)\!+\!D_{yxy} F^m_1(x, Y^{x,y,m}_t)\cdot (D^h_{x} Y^{x,y,m}_{t},k, D^l_{y} Y^{x,y,m}_{t})\\
&&\!\!\!\!\!\!\!\!+D_{yx} F^m_1(x,Y^{x,y,m}_t) \cdot
(D^{(h,l)}_{xy} Y^{x,y,m}_t, k)\!+\!D_{yyy} F^m_1(x, Y^{x,y,m}_t)\cdot(D^h_{x} Y^{x,y,m}_{t}, D^k_{x} Y^{x,y,m}_{t},D^l_{y} Y^{x,y,m}_{t})\\
&&\!\!\!\!\!\!\!\!+D_{yy} F^m_1(x,Y^{x,y,m}_t) \cdot
(D^{(h,l)}_{xy} Y^{x,y,m}_t, D^k_{x} Y^{x,y,m}_{t})\!+\!D_{yy} F^m_1(x,Y^{x,y,m}_t) \cdot (D^{h}_{x} Y^{x,y,m}_t, D^{(k,l)}_{xy} Y^{x,y,m}_{t})\\
&&\!\!\!\!\!\!\!\!+D_{yy} F^m_1(x, Y^{x,y,m}_t)\cdot (D^{(h,k)}_{xx}Y^{x,y,m}_{t}, D^l_{y} Y^{x,y,m}_{t})\!+\!D_{y} F^m_1(x, Y^{x,y,m}_t)\cdot D^{(h,k,l)}_{xxy}Y^{x,y,m}_{t} \Big],
\end{eqnarray*}
where $D^{(h,k,l)}_{xxy}Y^{x,y,m}_{t}$ is the
third order derivative of $Y^{x,y,m}_{t}$ (once with respect to $x$ in the direction $h\in H_m$, once with respect to $x$ in the direction $k\in H_m$ and once with respect to $y$ in the direction $l\in H_m$).

By a straightforward computation, it is easy to prove for any $h,k\in H_m$,
\begin{eqnarray}
&&\sup_{t\geq 0,x,y\in H_m}\EE|D^{(h,k)}_{xx} Y^{x,y,m}_t|^2\leq C|h|^2|k|^2,\label{EDxx}\\
&&\sup_{ x,y\in H_m}\EE|D^{(h,k,l)}_{xxy}Y^{x,y,m}_{t}|^2\leq C e^{-\frac{\theta t}{4}}(1+t^{-\delta})|h|^2|k|^2|l|^2.\label{EDxxy}
\end{eqnarray}
Combining these with  assumption \ref{A3}, we get for any $t>0,h,k,l\in H_m$,
\begin{eqnarray}
\sup_{x,y\in H_m}|D_{xxy}\hat F^m_1(x,y, t)\cdot(h,k,l)|\leq Ce^{-\frac{\theta t}{8}}(1+t^{-\delta/2})|h|\|k\|_{\delta}|l|.\label{DxxyF}
\end{eqnarray}
Hence,
\begin{eqnarray}
J_1\leq\!\!\!\!\!\!\!\!&& Ce^{-\frac{\theta t}{8}}(1+t^{-\delta/2})|h|\|k\|_{\delta}\EE|y-Y^{x,y,m}_{t_0}|\nonumber\\
\leq\!\!\!\!\!\!\!\!&& Ce^{-\frac{\theta t}{8}}(1+t^{-\delta/2})|h|\|k\|_{\delta}(1+|x|+|y|).\label{J1}
\end{eqnarray}

For the term $J_2$, note that
\begin{eqnarray*}
D_{xy}\hat F^m_1(x,y,t)\cdot(h,k)
=\!\!\!\!\!\!\!\!&& \EE \Big[ D_{xy}F^m_1(x, Y^{x,y,m}_t)\cdot(h,D^k_{y}Y^{x,y,m}_t)\!\!+\!\! D_{y}F^m_1(x,Y^{x,y,m}_t)\cdot D^{(h,k)}_{xy}Y^{x,y,m}_t\\
&&+D_{yy}F^m_1(x,Y^{x,y,m}_{t})\cdot(D^h_{x}Y^{x,y,m}_t, D^k_{y}Y^{x,y,m}_t)\Big].
\end{eqnarray*}
Combining this with assumption \ref{A3}, \eref{Hpartial yY}, \eref{S0} and \eref{partial xyY}, we get for any $t>0,h,k\in H_m$,
$$
\sup_{x,y\in H_m}|D_{xy}\hat F^m_1(x,y,t)\cdot(h,k)|\leq C e^{-\frac{\theta t}{4}}(1+t^{-\delta/2})|h||k|,
$$
which implies that
\begin{eqnarray}
J_2\leq\!\!\!\!\!\!\!\!&& C e^{-\frac{\theta t}{4}}(1+t^{-\delta/2})|h||k|.\label{J2}
\end{eqnarray}

For the term $J_3$, by a similar argument as in estimating $J_2$, we have
\begin{eqnarray*}
\sup_{x,y\in H_m}|D_{yy}\hat F^m_1(x,y,t)\cdot(h,k)|\leq C e^{-\frac{\theta t}{4}}(1+t^{-\delta/2})|h||k|,
\end{eqnarray*}
which implies that
\begin{eqnarray}
J_3\leq C e^{-\frac{\theta t}{4}}(1+t^{-\delta/2})|h||k|.\label{J3}
\end{eqnarray}

For the term $J_4$, by  \eref{EDxx} and \eref{S1},
we easily get
\begin{eqnarray}
J_4\leq C e^{-\frac{\theta t}{4}}|h||k|.\label{J4}
\end{eqnarray}

Finally, \eref{E22} can be easily obtained by combining \eref{J1}-\eref{J4}. The proof is complete.
\end{proof}

\section{Strong averaging convergence rate}

In this section, we
give the proofs of Theorem \ref{main result 1} and Theorem \ref{main result 12}. We first study the well-posedness of the averaged equation. Then by the Galerkin approximation
and the technique of Poisson equation, we prove Theorem \ref{main result 1} and Theorem \ref{main result 12} respectively.
\begin{lemma} \label{barX}
The process
\begin{eqnarray}
\bar X^m_t=e^{tA}x^m+\int^t_0e^{(t-s)A} \bar F^m_1(\bar X^m_s)ds+\int^t_0 e^{(t-s)A}G^m_1(\bar X^m_s)d\bar{W}^{1,m}_s,\label{3.22}
\end{eqnarray}
is the unique solution of equation \eref{Ga 1.3}.
Moreover, for any $x\in H$, $T>0$ and $p \geq1$, there exists a constant $C_{p,T}>0$ such that
\begin{align} \label{Control X}
\sup_{m\geq 1, t\in [0,T]}\mathbb{E}|\bar{X}^m_{t} |^p\leq  C_{p,T}(1+ |x|^p).
\end{align}
\end{lemma}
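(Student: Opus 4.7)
The plan is to break the proof into three steps: (1) establish that $\bar F^m_1$ is globally Lipschitz with a constant independent of $m$ and grows at most linearly uniformly in $m$, (2) invoke standard well-posedness theory for mild solutions of SPDEs with Lipschitz coefficients, and (3) derive the uniform moment estimate via Minkowski/Burkholder-Davis-Gundy and Gronwall.

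For Step 1, I would start from the representation $\bar F^m_1(x)=\int_{H_m} F^m_1(x,z)\mu^{x,m}(dz)$. Using the invariance of $\mu^{x,m}$ I write
\begin{equation*}
\bar F^m_1(x_1)-\bar F^m_1(x_2)=\int_{H_m}\!\!\int_{H_m}\!\bigl[\EE F^m_1(x_1,Y^{x_1,y,m}_t)-\EE F^m_1(x_2,Y^{x_2,y,m}_t)\bigr]\mu^{x_1,m}(dy).
\end{equation*}
Applying the Lipschitz property of $F_1$ from \ref{A1} together with the estimate \eref{increase Y} of Lemma \ref{L3.6} (taking $y_1=y_2=y$) and letting $t\to\infty$ via the ergodicity in Proposition \ref{ergodicity in finite}(i), one obtains $|\bar F^m_1(x_1)-\bar F^m_1(x_2)|\leq C|x_1-x_2|$ with $C$ independent of $m$. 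The linear growth $|\bar F^m_1(x)|\leq C(1+|x|)$ uniformly in $m$ then follows from the bound $|F^m_1(0,z)|\leq C(1+|z|)$ together with \eref{E3.20}, which controls $\int_{H_m}|z|^2\mu^{x,m}(dz)$ uniformly in $m$.

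For Step 2, since $A$ generates a $C_0$-semigroup of contractions and $\bar F^m_1$, $G^m_1$ are globally Lipschitz on $H$, the classical Banach fixed-point argument in $L^p(\Omega;C([0,T];H))$ (see e.g.\ Da Prato--Zabczyk) yields a unique mild solution satisfying \eref{3.22}; equivalently this is the unique solution of \eref{Ga 1.3} in $H_m$.

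For Step 3, I bound each term in \eref{3.22}: by \eref{SP1} the first term is controlled by $|x|$; for the drift term, Minkowski's inequality together with the uniform linear growth of $\bar F^m_1$ from Step 1 yields $\leq C\int_0^t(1+(\EE|\bar X^m_s|^p)^{1/p})\,ds$; for the stochastic convolution I apply the Burkholder-Davis-Gundy (or factorization) inequality and the Hilbert-Schmidt bound on $G^m_1$ coming from Assumption \ref{A1} (hence $|G^m_1(x)|_{HS}\leq |G_1(x)|_{HS}\leq C(1+|x|)$). Raising everything to the $p$-th power, taking expectation, and applying Gronwall's lemma gives $\sup_{t\in[0,T]}\EE|\bar X^m_t|^p\leq C_{p,T}(1+|x|^p)$, with constants depending only on the Lipschitz/growth constants of $\bar F^m_1$ and $G^m_1$, which are uniform in $m$.

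The only mildly non-routine step is the uniform-in-$m$ Lipschitz/growth control of $\bar F^m_1$, since naively $\mu^{x,m}$ lives on a different space $H_m$ for each $m$; the key is that both the contractivity constant in \eref{increase Y} and the second-moment bound \eref{E3.20} on $\mu^{x,m}$ are already proved uniformly in $m$, so the estimate transfers cleanly. Everything else is a standard SPDE argument.
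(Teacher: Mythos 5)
Your proposal is correct and follows essentially the same route as the paper: reduce everything to the uniform-in-$m$ Lipschitz (and linear growth) property of $\bar F^m_1$, proved via \eref{increase Y}, \eref{ergodicity1} and \eref{E3.20} by letting $t\to\infty$, then conclude well-posedness and the moment bound by the standard mild-solution fixed point and Minkowski/Burkholder--Davis--Gundy/Gronwall argument (the paper delegates the latter to the proof of Lemma \ref{GA1}). The only nitpick is that your displayed identity for $\bar F^m_1(x_1)-\bar F^m_1(x_2)$ is not exact for finite $t$ (the term $\int\EE F^m_1(x_2,Y^{x_2,y,m}_t)\,\mu^{x_1,m}(dy)$ equals $\bar F^m_1(x_2)$ only in the limit), but your appeal to the ergodicity estimate and $t\to\infty$ repairs this exactly as the paper's three-term triangle inequality does.
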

\begin{proof}
It suffices to check that the $\bar{F}^m_1$ is Lipschitz continuous,
from which one immediately get that \eref{Ga 1.3} has unique mild solution $\bar{X}^m_{t}$. The estimate \eref{Control X} can be proved by the same argument as in the proof of Lemma \ref{GA1} in the appendix.

For any $x_1,x_2\in H_m$ and $t>0$, by \eref{increase Y} and \eref{ergodicity1}, we have
\begin{eqnarray*}
|\bar{F}^m_1(x_1)-\bar{F}^m_1(x_2)|
=\!\!\!\!\!\!\!\!&&\left|\int_{H_m} F^m_1(x_1,y)\mu^{x_1,m}(dy)-\int_{H_m} F^m_1(x_2,y)\mu^{x_2,m}(dy)\right|\nonumber\\
\leq\!\!\!\!\!\!\!\!&&\left|\EE F^m_1(x_1, Y^{x_1,0,m}_t)-\int_{H_m} F^m_1(x_1,z)\mu^{x_1,m}(dz)\right|\nonumber\\
\!\!\!\!\!\!\!\!&&+\left|\EE F^m_1(x_2, Y^{x_2,0,m}_t)-\int_{H_m} F^m_1(x_2,z)\mu^{x_2,m}(dz)\right|\nonumber\\
&&+\left|\EE F^m_1(x_1, Y^{x_1,0,m}_t)-\EE F^m_1(x_2, Y^{x_2,0,m}_t)\right|\nonumber\\
\leq\!\!\!\!\!\!\!\!&&Ce^{-\frac{\theta t}{4}}(1+|x_1|+|x_2|)+C\left(|x_1-x_2|+\EE|Y^{x_1,0,m}_t-Y^{x_2,0,m}_t|\right)\nonumber\\
\leq\!\!\!\!\!\!\!\!&&Ce^{-\frac{\theta t}{4}}+C|x_1-x_2|,
\end{eqnarray*}
where $C>0$ which is independent of $t$. Then by letting $t\rightarrow \infty$, we get
 \begin{eqnarray}
|\bar{F}^m_1(x_1)-\bar{F}^m_1(x_2)|\leq C|x_1-x_2|.\label{BL}
\end{eqnarray}
The proof is complete.
\end{proof}
\begin{remark}\label{R4.2}
By following an argument similar as above, it is easy to prove that the averaged coefficient $\bar{F}_1$ is also Lipschitz continuous, thus equation \eref{1.3} admits a unique soultion.
\end{remark}

Next we are in a position to prove Theorems \ref{main result 1} firstly.

\vspace{0.1cm}
\noindent
\textbf{Proof of Theorem \ref{main result 1}}. We will divide the proof into three steps.

\textbf{Step 1.}  By Proposition \ref{P3.6}, the following Poisson equation
\begin{eqnarray*}
-\mathscr{L}^m_{2}(x)\Phi_m(x,y)=F^m_1(x,y)-\bar{F}^m_1(x)
\end{eqnarray*}
has a solution $\Phi_m(x,y)$ and estimates \eref{E1}-\eref{E3} hold.
Thus by the argument in Step 2 of Subsection 2.3, we have for any $T>0$ and $m\in \mathbb{N}_{+}$,
\begin{eqnarray}
\sup_{t\in [0, T]}\EE|X_{t}^{m,\vare}-\bar{X}^m_{t}|^p
\leq\!\!\!\!\!\!\!\!&&C_{p,T}\vare^{p}\Bigg\{\sup_{t\in[0,T]}\EE|e^{tA}\Phi_m(x^m,y^m)-\Phi_m(X_{t}^{m,\vare},Y^{m,\vare}_{t})|^p\nonumber\\
&&+\sup_{t\in[0,T]}\EE\left|\int^t_0 (-A)e^{(t-s)A}\Phi_m(X^{m,\vare}_{s},Y^{m,\vare}_{s})ds\right|^p\nonumber\\
&&+\sup_{t\in[0,T]}\EE\left|\int^t_0 e^{(t-s)A}\mathscr{L}^m_{1}(Y^{m,\vare}_{s})\Phi_m(X_{s}^{m,\vare},Y^{m,\vare}_{s})ds\right|^p\nonumber\\
&&+\sup_{t\in[0,T]}\EE|M^{m,\vare,1}_{t}|^p+\sup_{t\in[0,T]}\EE|M^{m,\vare,2}_{t}|^p\Bigg\}\nonumber\\
:=\!\!\!\!\!\!\!\!&&C_{p,T}\vare^{p}\sum^5_{k=1}\Lambda^{m,\vare,p}_k(T),\label{F5.4}
\end{eqnarray}
where $\mathscr{L}^m_{1}(y)$, $M^{m,\vare,1}_{t}$ and $M^{m,\vare,2}_{t}$ are defined as follows:
\begin{eqnarray}
&&\mathscr{L}^m_{1}(y)\Phi_m(x,y):=D_x\Phi_m(x,y)\cdot \left[Ax+F^m_1(x,y)\right]\nonumber\\
&&\quad\quad\quad\quad\quad\quad\quad\quad\quad+\frac{1}{2}\sum^m_{k=1} \left[D_{xx}\Phi_m(x,y)\cdot (G^m_1(x)e_k,G^m_1(x)e_k)\right];\label{L^m_1}\\
&&M^{m,\vare,1}_{t}:=\int_0^te^{(t-s)A}\left[D_x\Phi_m(X_{s}^{m,\vare},Y^{m,\vare}_{s})\cdot G^m_1(X_{s}^{m,\vare})d\bar{W}^{1,m}_s\right];\label{M_1}\\
&&M^{m,\vare,2}_{t}:=\frac{1}{\sqrt{\vare}}\int_0^te^{(t-s)A} \left[D_y\Phi_m(X_{s}^{m,\vare},Y^{m,\vare}_{s})\cdot G^m_2(X_{s}^{m,\vare},Y^{m,\vare}_{s})d\bar{W}^{2,m}_s\right].\label{M_2}
\end{eqnarray}

\textbf{Step 2.} In this step, we will estimate the terms $\Lambda^{m,\vare,p}_1(T)$, $\Lambda^{m,\vare,p}_2(T)$, $\Lambda^{m,\vare,p}_4(T)$ and $\Lambda^{m,\vare,p}_5(T)$ respectively.

By Minkowski's inequality, \eref{E1}, \eref{Xvare} and \eref{Yvare} in the appendix, we have for any  $m\in \mathbb{N}_{+}$, $p\geq 2$ and $\vare\in(0,1]$,
\begin{eqnarray}
\Lambda^{m,\vare,p}_1(T)\leq\!\!\!\!\!\!\!\!&&C_p\left[1+|x|^p+|y|^p+\sup_{t\in[0,T]}\EE|X^{m,\vare}_{t}|^p+\sup_{t\in[0,T]}\EE|Y^{m,\vare}_{t}|^p\right]\nonumber\\
\leq\!\!\!\!\!\!\!\!&& C_{p,T}(1+|x|^p+|y|^p).\label{F5.5}
\end{eqnarray}

By \eref{E1}, \eref{E2}, \eref{THXvare} and \eref{THYvare} in the appendix, we have for any  $m\in \mathbb{N}_{+}$, $p\geq 2$, $\eta\in (0,1)$ and $\vare\in(0,1]$,
\begin{eqnarray*}
\Lambda^{m,\vare,p}_2(T)\leq\!\!\!\!\!\!\!\!&&C_p\sup_{t\in[0,T]}\EE\left|\int^t_0 (-A)e^{(t-s)A}\left[\Phi_m(X^{m,\vare}_{s},Y^{m,\vare}_{s})-\Phi_m(X^{m,\vare}_{t},Y^{m,\vare}_{t})\right]ds\right|^p\nonumber\\
&&+C_p\sup_{t\in[0,T]}\EE\left|\int^t_0 (-A)e^{(t-s)A}\Phi_m(X^{m,\vare}_{t},Y^{m,\vare}_{t})ds\right|^p\nonumber\\
\leq\!\!\!\!\!\!\!\!&&C_p\sup_{t\in[0,T]}\left|\int^t_0 (t-s)^{-1}\left[\EE|\Phi_m(X^{m,\vare}_{s},Y^{m,\vare}_{s})-\Phi_m(X^{m,\vare}_{t},Y^{m,\vare}_{t})|^p\right]^{1/p}ds\right|^p\nonumber\\
&&+C_p\sup_{t\in[0,T]}\EE\left|\Phi_m(X^{m,\vare}_{t},Y^{m,\vare}_{t})\right|^p\nonumber\\
\leq\!\!\!\!\!\!\!\!&&C_p\sup_{t\in[0,T]}\left|\int^t_0 (t-s)^{-1}\left[\EE\left((1+|X^{m,\vare}_{s}|^p+|X^{m,\vare}_{t}|^p+|Y^{m,\vare}_{s}|^p)|X^{m,\vare}_{s}-X^{m,\vare}_{t}|^p\right)\right]^{1/p}ds\right|^p\nonumber\\
&&+C_p\sup_{t\in[0,T]}\left|\int^t_0 (t-s)^{-1}\left[\EE|Y^{m,\vare}_{s}-Y^{m,\vare}_{t}|^p\right]^{1/p}ds\right|^p\nonumber\\
&&+C_p\sup_{t\in[0,T]}\EE\left|\Phi_m(X^{m,\vare}_{t},Y^{m,\vare}_{t})\right|^p\nonumber\\
\leq\!\!\!\!\!\!\!\!&&C_{p,T}(1+|x|^{2p}+|y|^{2p})\sup_{t\in[0,T]}\left|\int^t_0 (t-s)^{-1}(t-s)^{\frac{\eta}{2}}s^{-\frac{\eta}{2}}ds\right|^p\nonumber\\
&&+C_{p,T}(1+|x|^p+|y|^{p})\sup_{t\in[0,T]}\left|\int^t_0 (t-s)^{-1}\left(\frac{t-s}{\vare}\right)^{\frac{\eta}{2}}s^{-\frac{\eta}{2}}ds\right|^p\nonumber\\
&&+C_{p,T}(1+|x|^p+|y|^{p})\nonumber\\
\leq\!\!\!\!\!\!\!\!&& C_{p,T}(1+|x|^{2p}+|y|^{2p})\vare^{-\frac{\eta p}{2}}.
\end{eqnarray*}

By  \eref{E1}, \eref{E2}, \eref{Xvare}, \eref{Yvare} and  the Burkholder-Davis-Gundy inequality, we get for any $m\in \mathbb{N}_{+}$, $p\geq 2$ and $\vare>0$,
\begin{eqnarray}
\Lambda^{m,\vare,p}_4(T)
\leq\!\!\!\!\!\!\!\!&&C_p\EE\left[\int^T_0(1+|X^{m,\vare}_{s}|^2+|Y^{m,\vare}_{s}|^2) |G^m_1(X_{s}^{m,\vare})|^2_{HS}ds\right]^{p/2}\nonumber\\
\leq\!\!\!\!\!\!\!\!&&C_p\EE\left[\int^T_0(1+|X^{m,\vare}_{s}|^2+|Y^{m,\vare}_{s}|^2)(1+|X^{m,\vare}_{s}|^2)ds\right]^{p/2}\nonumber\\
\leq\!\!\!\!\!\!\!\!&& C_{p,T}(1+|x|^{2p}+|y|^{2p})\label{F5.7}
\end{eqnarray}
and
\begin{eqnarray}
\Lambda^{m,\vare,p}_5(T)
\leq\!\!\!\!\!\!\!\!&&C_p\EE\left[\frac{1}{\vare}\int^T_0 |G^m_2(X_{s}^{m,\vare},Y^{m,\vare}_{s})|^2_{HS}ds\right]^{p/2}\nonumber\\
\leq\!\!\!\!\!\!\!\!&&C_{p,T}\vare^{-p/2}\EE \int^T_0(1+|X_{s}^{m,\vare}|^{p}+|Y^{m,\vare}_{s}|^{p})ds\nonumber\\
\leq\!\!\!\!\!\!\!\!&&C_{p,T}(1+|x|^{p}+|y|^{p})\vare^{-p/2},\label{F5.8}
\end{eqnarray}

If for any $m\in \mathbb{N}_{+}, p\geq 1$, $\eta\in (0,1)$ and $\vare\in (0,1]$, the following estimate holds:
\begin{eqnarray}
\Lambda^{m,\vare,p}_{3}(T)\leq C_{p,T}\vare^{-\frac{p\eta}{2}}(1+\|x\|^{3p}_{\eta}+|y|^{3p}), \label{Gamma3}
\end{eqnarray}
Then by combining \eref{F5.4}-\eref{Gamma3}, we final get
\begin{eqnarray*}
\sup_{t\in[0,T]}\mathbb{E}|X_{t}^{m,\vare}-\bar{X}^m_{t}|^p\leq C_{p,T}(1+\|x\|^{3p}_{\eta}+|y|^{3p})\vare^{p/2}.
\end{eqnarray*}

\textbf{Step 3.}
In this step, we  prove \eref{Gamma3}. It is easy to see
\begin{eqnarray*}
\Lambda^{m,\vare,p}_3(T)\leq\!\!\!\!\!\!\!\!&&C_p\sup_{t\in[0,T]}\EE\left|\int^t_0 e^{(t-s)A}\left[D_x\Phi_m(X_{s}^{m,\vare},Y^{m,\vare}_{s})\cdot \left(AX_{s}^{m,\vare}\right)\right]ds\right|^p\nonumber\\
&&+C_p\sup_{t\in[0,T]}\EE\left|\int^t_0 e^{(t-s)A}\left[D_x\Phi_m(X_{s}^{m,\vare},Y^{m,\vare}_{s})\cdot F^m_1(X_{s}^{m,\vare},Y^{m,\vare}_{s})\right]ds\right|^p\nonumber\\
&&+C_p\sup_{t\in[0,T]}\EE\left|\sum^m_{k=1} \int_0^t e^{(t-s)A}\left[D_{xx}\Phi_m(X_{s}^{m,\vare},Y^{m,\vare}_{s})\cdot (G^m_1(X_{s}^{m,\vare})e_k,G^m_1(X_{s}^{m,\vare})e_k)\right] ds\right|^p\\
:=\!\!\!\!\!\!\!\!&&\sum^3_{i=1}\Lambda^{m,\vare,p}_{3i}(T).
\end{eqnarray*}

By \eref{E2}, \eref{X_theta} \eref{Xvare}, \eref{Yvare}  and \eref{Xvare2} in the Appendix, we have for any $\eta\in (0,1)$,
\begin{eqnarray*}
\Lambda^{m,\vare,p}_{31}(T)
\leq\!\!\!\!\!\!\!\!&&C_p\EE\left|\int^T_0 (1+|X^{m,\vare}_{s}|+|Y^{m,\vare}_{s}|)  \|X_{s}^{m,\vare}\|_2ds\right|^p\nonumber\\
\leq\!\!\!\!\!\!\!\!&&C_{p}\left|\int^T_0\left[\EE\left((1+|X^{m,\vare}_{s}|^p+|Y^{m,\vare}_{s}|^p ) \|X^{m,\varepsilon}_s\|^p_{2}\right)\right]^{1/p}ds\right|^p\nonumber\\
\leq\!\!\!\!\!\!\!\!&&C_{p,T}\vare^{-\frac{p\eta}{2}}(1+\|x\|^{2p}_{\eta}+|y|^{2p})\left|\int^T_0 s^{-1+\eta/2}ds\right|^p\nonumber\\
\leq\!\!\!\!\!\!\!\!&&C_{p,T}(1+\|x\|^{2p}_{\eta}+|y|^{2p})\vare^{-\frac{p\eta}{2}}.
\end{eqnarray*}

By an argument similar as above, we have
\begin{eqnarray*}
\Lambda^{m,\vare,p}_{32}(T)\leq\!\!\!\!\!\!\!\!&&C_{p}\EE\int^T_0 |F^m_1(X_{s}^{m,\vare},Y^{m,\vare}_{s})|^p (1+|X^{m,\vare}_{s}|^p+|Y^{m,\vare}_{s}|^p)  ds\nonumber\\
\leq\!\!\!\!\!\!\!\!&&C_{p}\EE\int^T_0(1+|X_{s}^{m,\vare}|^p+|Y^{m,\vare}_{s}|^p)(1+|X^{m,\vare}_{s}|^p+|Y^{m,\vare}_{s}|^p) ds\\
\leq\!\!\!\!\!\!\!\!&& C_{p,T}(1+|x|^{2p}+|y|^{2p}).
\end{eqnarray*}

By \eref{E3} and condition \eref{BOG1}, we get
\begin{eqnarray*}
\Lambda^{m,\vare,p}_{33}(T)\leq\!\!\!\!\!\!\!\!&&C_{p,T}\EE\int_0^T (1+|X^{m,\vare}_{s}|^p+|Y^{m,\vare}_{s}|^p)|(-A)^{\delta/2}G^m_1(X_{s}^{m,\vare})|^{2p}_{HS}ds\\
\leq\!\!\!\!\!\!\!\!&&C_{p,T}\EE\int^T_0(1+|X^{m,\vare}_{s}|^p+|Y^{m,\vare}_{s}|^p)(1+|X^{m,\vare}_{s}|^{2p}) ds\\
\leq\!\!\!\!\!\!\!\!&&C_{p,T}(1+|x|^{3p}+|y|^{3p}).\label{F5.6}
\end{eqnarray*}

As a result, it is easy to see \eref{Gamma3} holds. The proof is complete.\\

\vspace{0.3cm}

Now we are in a position to prove Theorem \ref{main result 12}.

\noindent
\textbf{Proof of Theorem \ref{main result 12}}.
It is easy to see that for any $T>0$, $p\geq 1$ and $m\in \mathbb{N}_{+}$,
\begin{eqnarray*}
\EE\left(\sup_{t\in [0, T]}|X_{t}^{\vare}-\bar{X}_{t}|^p\right)\leq\!\!\!\!\!\!\!\!&&C_p\EE\left(\sup_{t\in [0, T]}|X_{t}^{m,\vare}-X_{t}^{\vare}|^p\right)+C_p\EE\left(\sup_{t\in [0, T]}|\bar{X}^m_{t}-\bar{X}_{t}|^p\right)\\
&&+C_p\EE\left(\sup_{t\in [0, T]}|X_{t}^{m,\vare}-\bar{X}^m_{t}|^p\right).
\end{eqnarray*}
By Lemmas \ref{GA1} and \ref{GA2},
the proof will be completed if we show that there exists a positive constant $C$ independent of $m$ such that
$$
\EE\left(\sup_{t\in [0, T]}|X_{t}^{m,\vare}-\bar{X}^m_{t}|^p\right)\leq C\vare^{p/2}.
$$

By a similar argument in Step 2 of Subsection 2.3, we have for any $T>0$ and $m\in \mathbb{N}_{+}$,
\begin{eqnarray}
\EE\left(\sup_{t\in [0, T]}|X_{t}^{m,\vare}-\bar{X}^m_{t}|^p\right)\leq\!\!\!\!\!\!\!\!&&C_{p,T}\EE\left[\sup_{t\in[0,T]}\left|\int_{0}^{t}-e^{(t-s)A}\mathscr{L}^m_{2}(X_{s}^{m,\vare})\Phi_m(X_{s}^{m,\vare},Y^{m,\vare}_{s})ds\right|^p\right]\nonumber\\
\leq\!\!\!\!\!\!\!\!&&C_{p,T}\vare^{p}\Bigg\{\EE\left[\sup_{t\in[0,T]}|e^{tA}\Phi_m(x^m,y^m)-\Phi_m(X_{t}^{m,\vare},Y^{m,\vare}_{t})|^p\right]\nonumber\\
&&+\EE\left[\sup_{t\in[0,T]}\left|\int^t_0 (-A)e^{(t-s)A}\Phi_m(X^{m,\vare}_{s},Y^{m,\vare}_{s})ds\right|^p\right]\nonumber\\
&&+\EE\left[\sup_{t\in[0,T]}\left|\int^t_0 e^{(t-s)A}\mathscr{L}^m_{1}(Y^{m,\vare}_{s})\Phi_m(X_{s}^{m,\vare},Y^{m,\vare}_{s})ds\right|^p\right]\nonumber\\
&&+\EE\left[\sup_{t\in[0,T]}|M^{m,\vare,1}_{t}|^p\right]+\EE\left[\sup_{t\in[0,T]}|M^{m,\vare,2}_{t}|^p\right]\Bigg\}\nonumber\\
:=\!\!\!\!\!\!\!\!&&C_{p,T}\vare^{p}\sum^5_{k=1}\Gamma^{m,\vare,p}_k(T),\label{HF5.4}
\end{eqnarray}
where $\mathscr{L}^m_{1}(y)$, $M^{m,\vare,1}_{t}$ and $M^{m,\vare,2}_{t}$ are defined as in \eref{L^m_1}, \eref{M_1} and \eref{M_2} respectively.

By  \eref{E1} and \eref{SYvare}, we have for any  $m\in \mathbb{N}_{+}$, $p\geq 2$ and $\vare\in(0,1]$,
\begin{eqnarray}
\Gamma^{m,\vare,p}_1(T)\leq\!\!\!\!\!\!\!\!&&C_p\left[1+|x|^p+|y|^p+\EE\left(\sup_{t\in[0,T]}|X^{m,\vare}_{t}|^p\right)+\EE\left(\sup_{t\in[0,T]}|Y^{m,\vare}_{t}|^p\right)\right]\nonumber\\
\leq\!\!\!\!\!\!\!\!&&\leq C_{p,T}(1+|x|^p+|y|^p)\vare^{-p/2}.\label{F5.5}
\end{eqnarray}

By \eref{E4} and \eref{SYvare}, we have for any  $m\in \mathbb{N}_{+}$, $p\geq 2$ and $\vare\in(0,1]$,
\begin{eqnarray*}
\Gamma^{m,\vare,p}_2(T)\leq\!\!\!\!\!\!\!\!&&\EE\left[\sup_{t\in[0,T]}\left|\int^t_0 (t-s)^{-1+\alpha/2}\|\Phi_m(X^{m,\vare}_{s},Y^{m,\vare}_{s})\|_{\alpha}ds\right|^p\right]\nonumber\\
\leq\!\!\!\!\!\!\!\!&&C\EE\left[\sup_{t\in[0,T]}\left|\int^t_0 (t-s)^{-1+\alpha/2}(1+|X^{m,\vare}_{s}|^2+|Y^{m,\vare}_{s}|^2)ds\right|^p\right]\nonumber\\
\leq\!\!\!\!\!\!\!\!&&C_{p,T}\EE\left[\sup_{t\in[0,T]}(1+|X^{m,\vare}_{s}|^{2p}+|Y^{m,\varepsilon}_t|^{2p})\right]\nonumber\\
\leq\!\!\!\!\!\!\!\!&& C_{p,T}(1+|x|^{2p}+|y|^{2p})\vare^{-p/2}.
\end{eqnarray*}

By a minor revision in estimating $\Lambda^{m,\vare,p}_3(T)$. It is easy to prove that for any $m\in \mathbb{N}_{+}, p\geq 1$ and $\vare\in (0,1]$,
\begin{eqnarray}
\Gamma^{m,\vare,p}_{3}(T)\leq C_{p,T}(1+|x|^{3p}+|y|^{3p})\vare^{-p/2}, \label{Gamma3}
\end{eqnarray}
By  \eref{E2}, \eref{Xvare}, \eref{Yvare} and  Maximal inequality (see \cite[(1.13)]{HS}, we get for any $m\in \mathbb{N}_{+}$, $p\geq 2$ and $\vare>0$,
\begin{eqnarray}
\Gamma^{m,\vare,p}_4(T)
\leq\!\!\!\!\!\!\!\!&&C_p\EE\left[\int^T_0(1+|X^{m,\vare}_{s}|^2+|Y^{m,\vare}_{s}|^2)|G^m_1(X_{s}^{m,\vare})|^2_{HS}ds\right]^{p/2}\nonumber\\
\leq\!\!\!\!\!\!\!\!&&C_p\EE\left[\int^T_0(1+|X^{m,\vare}_{s}|^2+|Y^{m,\vare}_{s}|^2)(1+|X^{m,\vare}_{s}|^2)ds\right]^{p/2}\nonumber\\
\leq\!\!\!\!\!\!\!\!&& C_{p,T}(1+|x|^{2p}+|y|^{2p})\label{F5.7}
\end{eqnarray}
and
\begin{eqnarray}
\Gamma^{m,\vare,p}_5(T)
\leq\!\!\!\!\!\!\!\!&&C_p\EE\left[\frac{1}{\vare}\int^T_0|G^m_2(X_{s}^{m,\vare},Y^{m,\vare}_{s})|^2_{HS}ds\right]^{p/2}\nonumber\\
\leq\!\!\!\!\!\!\!\!&&C_{p,T}\vare^{-p/2}.\label{F5.8}
\end{eqnarray}

Hence, by combining \eref{F5.4}-\eref{F5.8}, we final get
\begin{eqnarray*}
\mathbb{E}\left(\sup_{t\in[0,T]}|X_{t}^{m,\vare}-\bar{X}^m_{t}|^p\right)\leq C_{p,T}(1+\|x\|^{3p}_{\eta}+|y|^{3p})\vare^{p/2}.
\end{eqnarray*}
The proof is complete.

\section{Weak averaging convergence rate}

In this section, we present the proof of Theorem \ref{main result 2}. We still consider the problem in finite dimension first, then passing the limit to the infinite dimensional case.
For any test function $\phi \in C_{b}^{3}(H)$, we have for any $t\geq 0$,
\begin{eqnarray} \label{ephix}
\left|\mathbb{E}\phi\left(X^{\vare}_t\right)
-\EE\phi(\bar{X}_t)\right|
\leq\!\!\!\!\!\!\!\!&&
\left|\mathbb{E}\phi\left(X^{\vare}_t\right)
-\mathbb{E}\phi\left(X^{m,\vare}_t\right)\right|+\left|\EE\phi (\bar{X}^m_t)-\EE\phi(\bar{X}_t)\right| \nonumber\\
\!\!\!\!\!\!\!\!&&+\left|\mathbb{E}\phi\left(X^{m,\vare}_t\right)-\EE\phi (\bar{X}^m_t)\right|.
\end{eqnarray}
Lemmas \ref{GA1} and \ref{GA2} in the appendix imply that
\begin{eqnarray*}
&&\lim_{m\rightarrow\infty}\sup_{t\in [0, T]}\left|\mathbb{E}\phi\left(X^{\vare}_t\right)
-\mathbb{E}\phi\left(X^{m,\vare}_t\right)\right|=0,\\
&&\lim_{m\rightarrow\infty}\sup_{t\in [0, T]}\left|\EE\phi (\bar{X}^m_t)-\EE\phi(\bar{X}_t)\right|=0.
\end{eqnarray*}
The proof will be complete if we can show that
$$
\sup_{t\in [0, T]}\left|\mathbb{E}\phi\left(X^{m,\vare}_t\right)-\EE\phi (\bar{X}^m_t)\right|\leq C\vare,
$$
where $C$ is a positive constant independent of $m$. To do this, we introduce the following finite dimensional Kolmogorov equation:
\begin{equation}\left\{\begin{array}{l}\label{KE}
\displaystyle
\partial_t u_m(t,x)=\bar{\mathscr{L}}^m_1 u_m(t,x),\quad t\in[0, T], \\
u_m(0, x)=\phi(x),\quad x\in H_m,
\end{array}\right.
\end{equation}
where $\phi\in C^{3}_b(H)$ and $\bar{\mathscr{L}}^m_1$ is the infinitesimal generator of the transition semigroup of the averaged equation \eref{Ga 1.3}, which is given by
\begin{eqnarray*}
\bar{\mathscr{L}}^m_{1}\varphi(x):=\!\!\!\!\!\!\!\!&& D\varphi(x)\cdot\left[Ax+\bar{F}^m_1(x)\right]+\frac{1}{2}\sum^m_{k=1}\left[D^2\varphi(x)\cdot(G^m_1e_k,G^m_1e_k)\right], \quad x\in H_m.
\end{eqnarray*}

\begin{proposition}\label{Kolmogorov Eq}
Suppose that assumptions \ref{A1}-\ref{A3} and \ref{A5} hold. Then equation \eref{KE} admits a unique solution $u_m$. Moreover, for any $h,k,l\in H_m$, $\eta\in (0,2]$,
\begin{eqnarray}
&&\sup_{x\in H_m,m\geq 1}|D_{x} u_m(t,x)\cdot h|\leq C_T t^{-1+\eta/2}\|h\|_{\eta-2},\quad t\in(0,T],\label{UE3}\\
&&\sup_{x\in H_m,m\geq 1}|D_{xx} u_m(t,x)\cdot (h,k)|\leq C_T|h||k|,\quad  t\in[0,T],\label{UE4}\\
&&\sup_{x\in H_m,m\geq 1}|D_{xxx} u_m(t,x)\cdot (h,k,l)|\leq C_T |h||k||l|,\quad  t\in[0,T],\label{UE5}\\
&&\!\!\sup_{m\geq 1}|\partial_t(D_x u_m(t,x))\cdot h|\!\leq\!\! C_Tt^{-1+\eta/2}\|h\|_{\eta}\!\!+\!C_T|h|(1\!\!+\!|x|\!+\!\|x\|_{2}),x\in H_m, t\in (0,T].\label{UE2}
\end{eqnarray}
\end{proposition}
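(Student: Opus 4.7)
The plan is to identify the solution of \eref{KE} as $u_m(t,x) := \mathbb{E}\phi(\bar{X}^{m,x}_t)$, where $\bar{X}^{m,x}$ denotes the solution of the averaged equation \eref{Ga 1.3} starting from $x \in H_m$. Existence and uniqueness of $u_m$ follow from the Lipschitz continuity of $\bar{F}^m_1$ established in \eref{BL} together with standard arguments for the Kolmogorov equation associated with a finite-dimensional SPDE on $H_m$.

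For the spatial derivative bounds \eref{UE3}--\eref{UE5}, I would introduce the first-, second-, and third-order derivative processes $\zeta^h_t := D^h_x \bar{X}^{m,x}_t$, $\eta^{(h,k)}_t := D^{(h,k)}_{xx}\bar{X}^{m,x}_t$, and $\chi^{(h,k,l)}_t := D^{(h,k,l)}_{xxx}\bar{X}^{m,x}_t$. Each solves a linear or affine SPDE with forcing given by lower-order derivatives. In particular, $\zeta^h_t$ satisfies the mild equation
\begin{equation*}
\zeta^h_t = e^{tA}h + \int_0^t e^{(t-s)A}\bigl[D\bar{F}^m_1(\bar{X}^{m,x}_s)\cdot\zeta^h_s\bigr]ds + \int_0^t e^{(t-s)A}\bigl[DG^m_1(\bar{X}^{m,x}_s)\cdot\zeta^h_s\bigr]d\bar{W}^{1,m}_s.
\end{equation*}
Applying \eref{P3} to the leading term with $\sigma_1=\eta-2$ and $\sigma_2=0$, combined with a Gronwall argument and the uniform Lipschitz bounds on $\bar{F}^m_1$ and $G^m_1$, yields $(\mathbb{E}|\zeta^h_t|^2)^{1/2} \leq C_T t^{-1+\eta/2}\|h\|_{\eta-2}$; estimate \eref{UE3} then follows from $|D_x u_m(t,x)\cdot h| \leq \|D\phi\|_\infty(\mathbb{E}|\zeta^h_t|^2)^{1/2}$. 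The bounds \eref{UE4} and \eref{UE5} are obtained by analogous $L^2$ estimates on $\eta^{(h,k)}_t$ and $\chi^{(h,k,l)}_t$, coupled with the chain-rule expansions of $D^2 u_m$ and $D^3 u_m$ in terms of $D^j\phi$ evaluated along $\bar{X}^{m,x}_t$.

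The main obstacle is establishing that $\bar{F}^m_1(x)=\int_{H_m} F^m_1(x,y)\mu^{x,m}(dy)$ is of class $C^3$ with derivatives bounded uniformly in $m$, since this is what controls the coefficients appearing in the SPDEs for $\eta^{(h,k)}$ and $\chi^{(h,k,l)}$. The Lipschitz bound is already contained in \eref{BL}, but the higher derivatives require differentiating the invariant measure $\mu^{x,m}$ in $x$. This step is handled through the Poisson-equation representation developed in Section 3 and the third-order derivative hypotheses in assumption \ref{A5} (namely \eref{ThirdDer F4}--\eref{ThirdDer G2}), together with the second-order bounds on derivatives of $F_2, G_2$ from assumption \ref{A3}.

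Finally, for the time-derivative estimate \eref{UE2}, I would differentiate the Kolmogorov equation \eref{KE} in $x$ along direction $h$:
\begin{equation*}
\partial_t\bigl(D_x u_m(t,x)\cdot h\bigr) = D^2_x u_m(t,x)\cdot(h,\,Ax+\bar{F}^m_1(x)) + D_x u_m(t,x)\cdot\bigl(Ah + D\bar{F}^m_1(x)\cdot h\bigr) + \tfrac{1}{2}\sum_k D^3_x u_m(t,x)\cdot(h,\,G^m_1 e_k,\,G^m_1 e_k).
\end{equation*}
The first summand is controlled by \eref{UE4} using $|Ax|\leq\|x\|_2$ and $|\bar{F}^m_1(x)|\leq C(1+|x|)$. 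For the $Ah$ contribution I apply \eref{UE3} together with the identity $\|Ah\|_{\eta-2}=\|h\|_\eta$, while the remaining piece of the middle term uses \eref{UE3} at $\eta=2$ and the Lipschitz bound on $\bar{F}^m_1$. The trace sum is absorbed by \eref{UE5} together with the Hilbert-Schmidt property $\sum_k|G^m_1 e_k|^2<\infty$ guaranteed by \ref{A5}.
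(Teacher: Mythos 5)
Your proposal is correct and follows essentially the same route as the paper: represent $u_m(t,x)=\EE\phi(\bar X^m_t(x))$, bound the first three derivative processes of $\bar X^m_t(x)$ using the uniform $C^3$-regularity of $\bar F^m_1$ (which the paper likewise reduces to assumptions \ref{A3} and \ref{A5}), and obtain \eref{UE2} from the identity $\partial_t(D_xu_m(t,x)\cdot h)=D_xu_m(t,x)\cdot(Ah+D\bar F^m_1(x)\cdot h)+D_{xx}u_m(t,x)\cdot(h,Ax+\bar F^m_1(x))+\frac12\sum_kD_{xxx}u_m(t,x)\cdot(h,G^m_1e_k,G^m_1e_k)$, which the paper derives via an auxiliary Markov process $(\bar X^m_t,\eta^{h,m}_t(x))$ and you derive by differentiating \eref{KE} directly. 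The only cosmetic difference is that your mild equation for $D^h_x\bar X^{m}_t$ carries a stochastic-convolution term involving $DG^m_1$, which vanishes under \ref{A5} since $G_1$ is additive there, so the paper's derivative processes are in fact pathwise deterministic.
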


\begin{proof}
Note that
$$\bar{F}^m_1(x):=\int_{H_m}F^m_1(x,y)\mu^{x,m}(dy)=\lim_{t\rightarrow \infty}\EE F^m_1(x, Y^{x,y,m}_t).$$
Then through a straightforward computation, by assumptions \ref{A3} and \ref{A5} it is easy to check that
\begin{eqnarray*}
&&\!\!\!\!\!\!\!\!\!\!\!\!\!\!\!\!\!|D\bar{F}^m_1(x)\cdot h|\leq C |h| \quad \forall x, h\in H_m ;  \\
&&\!\!\!\!\!\!\!\!\!\!\!\!\!\!\!\!\!|D^2\bar{F}^m_1(x)\cdot(h,k)|\leq C |h|\|k\|_{\delta},\quad \forall x, h, k\in H_m;  \\
&&\!\!\!\!\!\!\!\!\!\!\!\!\!\!\!\!\!|D^3\bar{F}^m_1(x)\cdot(h,k,l)|\leq C|h|\|k\|_{\delta}\|l\|_{\delta}, \quad \forall x,h,k,l\in H_m.
\end{eqnarray*}
Hence \eref{KE} has a unique solution $u_m$ given by
$$
u_m(t,x)=\EE\phi(\bar{X}^m_t(x)),\quad t\in [0,T].
$$

For any $h,k\in H_m$, we have
\begin{eqnarray*}
&&D_x u_m(t,x)\cdot h=\EE[D\phi(\bar{X}^m_t)\cdot \eta^{h,m}_t(x)],\\
&&D_{xx} u_m(t,x)\cdot (h,k)=\EE\left[D^2\phi(\bar{X}^m_t)\cdot (\eta^{h,m}_t(x),\eta^{k,m}_t(x))\right]+\EE\left[D\phi(\bar{X}^m_t)\cdot  \zeta^{h,k,m}_t(x)\right]\\
&&D_{xxx} u_m(t,x)\cdot (h,k,l)=\EE\left[D^3\phi(\bar{X}^m_t)\cdot (\eta^{h,m}_t(x),\eta^{k,m}_t(x),\eta^{l,m}_t(x))\right]\\
&&\quad\quad\quad+\EE\left[D^2\phi(\bar{X}^m_t)\cdot (\zeta^{h,l,m}_t(x),\eta^{k,m}_t(x))+D^2\phi(\bar{X}^m_t)\cdot (\eta^{h,m}_t(x),\zeta^{k,l,m}_t(x))\right]\\
&&\quad\quad\quad+\EE\left[D^2\phi(\bar{X}^m_t)\cdot  (\zeta^{h,k,m}_t(x), \eta^{l,m}_t(x))+D\phi(\bar{X}^m_t)\cdot  \chi^{h,k,l,m}_t(x)\right],
\end{eqnarray*}
where $\eta^{h,m}_t(x):=D_x \bar{X}^m_t(x)\cdot h$, $\zeta^{h,k,m}_t(x):=D_{xx} \bar{X}^m_t(x)\cdot (h,k)$ and $\chi^{h,k,l,m}_t(x)$  satisfy
 \begin{equation}\left\{\begin{array}{l}
\displaystyle
d\eta^{h,m}_t(x)\!=\!A\eta^{h,m}_t(x)dt+D\bar{F}^m_1(\bar{X}^m_t)\cdot \eta^{h,m}_t(x)dt,\nonumber\\
\eta^{h,m}_0(x)=h,\nonumber
\end{array}\right.
\end{equation}
 \begin{equation}\left\{\begin{array}{l}
\displaystyle
d\zeta^{h,k,m}_t(x)=\left[A\zeta^{h,k,m}_t(x)+\!\!D\bar{F}^m_1(\bar{X}^m_t)\cdot \zeta^{h,k,m}_t(x)+D^2\bar{F}^m_1(\bar{X}^m_t)\cdot (\eta^{h,m}_t(x), \eta^{k,m}_t(x))\right]\!\!dt\nonumber\\
\zeta^{h,k,m}_0(x)=0.\nonumber
\end{array}\right.
\end{equation}
and
 \begin{equation}\left\{\begin{array}{l}
\displaystyle
d\chi^{h,k,l,m}_t(x)=\left[A\chi^{h,k,m}_t(x)+\!\!D\bar{F}^m_1(\bar{X}^m_t)\cdot \chi^{h,k,l,m}_t(x)+D^2\bar{F}^m_1(\bar{X}^m_t)\cdot (\zeta^{h,k,m}_t(x), \eta^{l,m}_t(x))\right.\nonumber\\
\quad\quad +D^2\bar{F}^m_1(\bar{X}^m_t)\cdot (\zeta^{h,l,m}_t(x),\eta^{k,m}_t(x))+D^2\bar{F}^m_1(\bar{X}^m_t)\cdot (\eta^{h,m}_t(x),\zeta^{k,l,m}_t(x))\nonumber\\
\quad\quad+\left.D^3\bar{F}^m_1(\bar{X}^m_t)\cdot (\eta^{h,m}_t(x), \eta^{k,m}_t(x), \eta^{l,m}_t(x))\right]dt\nonumber\\
\chi^{h,k,l,m}_0(x)=0.\nonumber
\end{array}\right.
\end{equation}
By a straightforward computation, we obtain for any $t\in(0,T]$ and $\eta\in (0,2]$,
\begin{eqnarray}
&&|\eta^{h,m}_t(x)|\leq C_T t^{-1+\eta/2}\|h\|_{\eta-2},\label{D barX}\\
&&|\zeta^{h,k,m}_t(x)|\leq C_T |h||k|,\label{Dxx barX}\\
&&|\chi^{h,k,l,m}_t(x)|\leq C_T|h||k||l|.\label{Dxxx barX}
\end{eqnarray}
Since $\phi\in C^{3}_b(H)$, \eref{UE3}-\eref{UE5} can be easily obtained from \eref{D barX}-\eref{Dxxx barX}.

Next, we intend to prove \eref{UE2}. Considering the auxiliary Markov process defined by $Z^m_t(x,h):=(\bar{X}^m_t, \eta^{h,m}_t(x))$ with generator given by
\begin{eqnarray*}
\tilde{\mathscr{L}}^m\psi(x,h):=\!\!\!\!\!\!\!\!&& D_h\psi(x,h)\cdot \left[Ah+D\bar{F}^m_1(x)\cdot h\right]+\bar{\mathscr{L}}^m_1\psi(x,h).
\end{eqnarray*}
Define $\psi(x,h)=D\phi(x)\cdot h$, then the semigroup $\bar{P}_t \psi(x,h):=\EE[\psi(Z^m_t(x,h))]$ satisfies the following Kolmogorov equation:
\begin{eqnarray*}
\partial_t\bar{P}_t \psi(x,h)=\!\!\!\!\!\!\!\!&&\tilde{\mathscr{L}}^m\bar{P}_t \psi(x,h).
\end{eqnarray*}
Note that $\bar{P}_t \psi(x,h)=D_x u_m(t,x)\cdot h$. By \eref{UE3}-\eref{UE5},  we get for any $t\in (0,T]$ and $\eta\in (0,2]$,
\begin{eqnarray*}
|\partial_t(D_x u_m(t,x))\cdot h|\leq\!\!\!\!\!\!\!\!&&|D_x u_m(t,x)\cdot (Ah+D\bar{F}^m_1(x)\cdot h)+D_{xx} u_m(t,x)\cdot (h, Ax+\bar{F}^m_1(x))|\\
&&+\frac{1}{2}\sum^m_{k=1}\left|D_{xxx}u_m(t,x)\cdot(h,G^m_1e_k,G^m_1e_k)\right|\\
\leq\!\!\!\!\!\!\!\!&& C_Tt^{-1+\eta/2}\|h\|_{\eta}+C_T|h|(1+|x|+\|x\|_{2})\\
\leq\!\!\!\!\!\!\!\!&& C_Tt^{-1+\eta/2}\|h\|_{\eta}+C_T|h|(1+|x|+\|x\|_{2}).
\end{eqnarray*}
Thus \eref{UE2} holds.
\end{proof}

\vspace{0.1cm}
Now, we are in a position to prove Theorem \ref{main result 2}.

\noindent
Proof of Theorem \ref{main result 2}: We will divide the proof into two steps.

\textbf{Step 1.} Let $\tilde{u}^t_m(s,x):=u_m(t-s,x)$, $s\in [0,t]$, by It\^{o}'s formula we have
\begin{eqnarray*}
\tilde{u}^t_m(t, X^{m,\vare}_t)=\!\!\!\!\!\!\!\!&&\tilde{u}^t_m(0,x)+\int^t_0 \partial_s \tilde{u}^t_m(s, X^{m,\vare}_s )ds+\int^t_0 \mathscr{L}^m_{1}(Y^{m,\vare}_s)\tilde{u}^t_m(s, X^{m,\vare}_s)ds+\tilde{M}^m_t,
\end{eqnarray*}
where $\tilde{M}^m_t$ is defined by
$$
\tilde{M}^m_t:=\int^t_0\langle D_x\tilde{u}^t_m(s,X_s^{m,\varepsilon}),G^m_{1}d\bar{W}_s^{1,m}\rangle.
$$

Note that
\begin{eqnarray*}
&&\tilde{u}^t_m(t, X^{m,\vare}_t)=\phi(X^{m,\vare}_t);\\
&&\tilde{u}^t_m(0, x)=\EE\phi(\bar{X}^m_t(x));\\
&&\partial_s \tilde{u}^t_m(s, X^{m,\vare}_s )=-\bar{\mathscr{L}}^m_1 \tilde{u}^t_m(s, X^{m,\vare}_s).
\end{eqnarray*}
It follows that
\begin{eqnarray}
\left|\EE\phi(X^{m,\vare}_{t})-\EE\phi(\bar{X}^m_{t})\right|=\!\!\!\!\!\!\!\!&&\left|\EE\int^t_0 -\bar{\mathscr{L}}^m_1 \tilde{u}^t_m(s, X^{m,\vare}_s )ds+\EE\int^t_0 \mathscr{L}^m_{1}(Y^{m,\vare}_s)\tilde{u}^t_m(s, X^{m,\vare}_s)ds\right|\nonumber\\
=\!\!\!\!\!\!\!\!&&\left|\EE\int^t_0 D_x \tilde{u}^t_m(s, X^{m,\vare}_s )\cdot (F^m_1(X^{m,\vare}_s,Y^{m,\vare}_s)-\bar{F}^m_1(X^{m,\vare}_s)) ds \right|\nonumber\\
\leq\!\!\!\!\!\!\!\!&&\left|\EE\int^{t}_0 D_x \tilde{u}^t_m(s, X^{m,\vare}_s )\cdot (F^m_1(X^{m,\vare}_s,Y^{m,\vare}_s)-\bar{F}^m_1(X^{m,\vare}_s)) ds \right|\nonumber\\
&&+\left|\EE\int^{t}_{0} D_x \tilde{u}^t_m(s, X^{m,\vare}_s )\cdot(F^m_1(X^{m,\vare}_s,Y^{m,\vare}_s)-\bar{F}^m_1(X^{m,\vare}_s),) ds \right|\nonumber\\
\leq\!\!\!\!\!\!\!\!&&\left|\EE\int^{t}_0 D_x \tilde{u}^t_m(s, X^{m,\vare}_s )\cdot(F^m_1(X^{m,\vare}_s,Y^{m,\vare}_s)-\bar{F}^m_1(X^{m,\vare}_s)) ds \right|.\label{F5.11}
\end{eqnarray}

In order to estimate \eref{F5.11}, we consider the following Poisson equation:
\begin{eqnarray}
-\mathscr{L}^m_{2}(x)\tilde{\Phi}^t_m(s,x,y)=K^t_m(s,x,y)-\bar{K}^t_m(s,x),\label{WPE}
\end{eqnarray}
where for any $s\in [0,t], x,y\in H_m$,
\begin{eqnarray*}
&&K^t_m(s,x,y):=D_x \tilde{u}^t_m(s, x)\cdot F^m_1(x,y),\\
&&\bar{K}^t_m(s,x):=\int_{H_m} K^t_m(s,x,y)\mu^{x,m}(dy)=D_x \tilde{u}^t_m(s, x)\cdot \bar{F}^m_1(x).
\end{eqnarray*}
By assumptions \ref{A3}, \ref{A5} and $u_m(\cdot,\cdot)$ satisfy \eref{UE3}-\eref{UE2}, then following an argument similar to that used in the proof of Proposition \ref{P3.6}, we can construct
$$
\tilde{\Phi}^t_m(s, x,y):=\int^{\infty}_0 \left[\EE K^t_m(s,x,Y^{x,y,m}_r)-\bar{K}^t_m(s,x) \right]dr, \quad s\in [0,t], x,y\in H_m,
$$
which is a solution of \eref{WPE}. Moreover for any $T>0$, $t\in [0,T]$, there exists $C_T>0$ such that the following estimates hold:
\begin{eqnarray}
&&\sup_{s\in [0, t],m\geq 1}|\tilde{\Phi}^t_m(s,x,y)|\leq C_T(1+|x|+|y|);\label{E121}\\
&&\sup_{s\in [0, t],m\geq 1}|D_x \tilde{\Phi}^t_m(s,x,y)\cdot h|\leq C_{T}(1+|x|+|y|)|h|;\label{E122}\\
&&\sup_{s\in [0, t],m\geq 1}|D_{xx}\tilde{\Phi}^t_m(s,x, y)\cdot(h,k)|\leq \!\!C_{T}(1+|x|+|y|)|h|\|k\|_{\delta}; \label{E221}\\
&&\!\sup_{m\geq 1}|\partial_s \tilde{\Phi}^t_m(s,x,y)|\leq \!\!C_{T}\!\left[(t-s)^{-1+\alpha/2}\!\!+\!1\!+\!|x|\!+\!\|x\|_{2}\!\right]\!(1\!+\!|x|^2\!+\!|y|^2), s\in [0,t),\label{E120}
\end{eqnarray}
where $\delta$ and $\alpha$  the constants in assumptions \ref{A3} and \ref{A5} respectively.

We here only give the proof of \eref{E120}, and the proofs of \eref{E121}-\eref{E221} are omitted
since their proofs are similar to those  in the proof of Proposition \ref{P3.6}.

By condition \eref{BH}, \eref{UE2} and Propositions \ref{ergodicity in finite}, we get
\begin{eqnarray*}
|\partial_s \tilde{\Phi}^t_m(s,x,y)|\leq\!\!\!\!\!\!\!\!&&\int^{\infty}_{0}|\langle \EE F^m_1(x,Y^{x,y,m}_r)-\bar{F}^m_1(x), \partial_s(D_x \tilde{u}^t_m(s, x))\rangle |dr\\
\leq\!\!\!\!\!\!\!\!&&C_T(t-s)^{-1+\alpha/2}\int^{\infty}_{0}\|\EE F^m_1(x,Y^{x,y,m}_r)-\bar{F}^m_1(x)\|_{\alpha}dr\\
&&+C_T(1+|x|+\|x\|_{2})\int^{\infty}_{0}|\EE F^m_1(x,Y^{x,y,m}_r)-\bar{F}^m_1(x)|dr\\
\leq\!\!\!\!\!\!\!\!&& C_T(t-s)^{-1+\alpha/2}(1+|x|^2+|y|^2)\\
&&\quad\cdot\int^{\infty}_{0}\left[(r^{-\beta}+r^{-\frac{\beta}{2}})e^{-\frac{\lambda_{1}r}{2}}+(r^{-\frac{\beta}{2}}+1)e^{-\frac{\theta r}{4}}\right]dr\\
&&+C_T(1+|x|+\|x\|_{2})(1+|x|+|y|)\int^{\infty}_0 e^{-\frac{\theta}{4}r}dr\\
\leq\!\!\!\!\!\!\!\!&& C_{T}\left[(t-s)^{-1+\alpha/2}+1+|x|+\|x\|_{2}\right](1+|x|^2+|y|^2).
\end{eqnarray*}


\vspace{0.2cm}
\textbf{Step 2.} Applying It\^o's formula and taking expectation, we get for any $t\in [0,T]$,
\begin{eqnarray*}
&&\EE\tilde{\Phi}^t_m(t, X_{t}^{m,\vare},Y^{m,\vare}_{t})=\tilde \Phi^t_m(0, x,y)+\EE\int^{t}_0 \partial_s \tilde{\Phi}^t_m(s, X_{s}^{m,\vare},Y^{m,\vare}_{s})ds\\
&&+\EE\int^{t}_0\mathscr{L}^m_{1}(Y^{m,\vare}_{s})\tilde\Phi^t_m(s, X_{s}^{m,\vare},Y^{m,\vare}_{s})ds+\frac{1}{\vare}\EE\int^{t}_0 \mathscr{L}^m_{2}(X_{s}^{m,\vare})\tilde{\Phi}^t_m(s, X_{s}^{m,\vare},Y^{m,\vare}_{s})ds,
\end{eqnarray*}
which implies
\begin{eqnarray}
&&-\EE\int^{t}_0 \mathscr{L}^m_{2}(X_{s}^{m,\vare})\tilde\Phi^t_m(s, X_{s}^{m,\vare},Y^{m,\vare}_{s})ds\nonumber\\
=\!\!\!\!\!\!\!\!&&\vare\big[\tilde{\Phi}^t_m(0, x,y)-\EE\tilde{\Phi}^t_m(t, X_{t}^{m,\vare},Y^{m,\vare}_{t})+\EE\int^{t}_0 \partial_s \tilde{\Phi}^{t}_m(s, X_{s}^{m,\vare},Y^{m,\vare}_{s})ds\nonumber\\
&&+\EE\int^{t}_0\mathscr{L}^m_{1}(Y^{m,\vare}_{s})\tilde{\Phi}^{t}_m(s, X_{s}^{m,\vare},Y^{m,\vare}_{s})ds\big].\label{F3.39}
\end{eqnarray}

Combining  \eref{F5.11}, \eref{WPE} and \eref{F3.39}, we get
\begin{eqnarray*}
&&\sup_{t\in[0,T]}\left|\EE\phi(X^{m,\vare}_{t})-\EE\phi(\bar{X}^m_{t})\right|
\leq \vare\Bigg[\sup_{t\in [0,T]}|\tilde{\Phi}^t_m(0, x,y)|+\sup_{t\in[0,T]}\left|\EE\tilde{\Phi}^t_m(t, X_{t}^{m,\vare},Y^{m,\vare}_{t})\right|\nonumber\\
&&+\sup_{t\in [0,T]}\EE\int^{t}_0 \left|\partial_s \tilde{\Phi}^t_m(s, X_{s}^{m,\vare},Y^{m,\vare}_{s})\right|ds+\sup_{t\in [0,T]}\EE\int^{t}_0\left|\mathscr{L}^m_{1}(Y^{m,\vare}_{s})\tilde{\Phi}^t_m(s, X_{s}^{m,\vare},Y^{m,\vare}_{s})\right|ds\Bigg]\nonumber\\
:=\!\!\!\!\!\!\!\!&&\vare\sum^4_{k=1}\tilde{\Lambda}^{m,\vare}_k(T).
\end{eqnarray*}

For the terms $\tilde{\Lambda}_1^{m,\varepsilon}(T)$ and  $\tilde{\Lambda}_2^{m,\varepsilon}(T)$. By estimates \eref{E121} and \eref{Yvare}, it is easy to get
\begin{eqnarray}
\tilde{\Lambda}^{m,\vare}_1(T)+\tilde{\Lambda}^{m,\vare}_2(T)\leq C_{T}(1+|x|+|y|+\EE|X^{m,\vare}_{t}|+\EE|Y^{m,\vare}_{t}|)\leq C_T(1+|x|+|y|).\label{Lambda12(T)}
\end{eqnarray}

For the term $\tilde{\Lambda}_3^{m,\varepsilon}(T)$. By estimates \eref{E120}, \eref{Xvare}, \eref{Yvare} and \eref{X_theta}, we have
\begin{eqnarray}
\tilde{\Lambda}^{m,\vare}_3(T)\leq\!\!\!\!\!\!\!\!&& C_{T}\sup_{t\in[0,T]}\int^{t}_0\left[\EE(1+|X^{m,\vare}_{s}|^4+|Y^{m,\vare}_{s}|^4)\right]^{1/2}\\
&&\quad\quad\quad\quad\cdot\left[\EE(1+|X^{m,\vare}_{s}|^2+\|X^{m,\vare}_{s}\|^2_{2})\right]^{1/2}ds\nonumber\\
&&+C_{T}\sup_{t\in[0,T]}\int^{t}_0(t-s)^{-1+\alpha/2}\EE(1+|X^{m,\vare}_{s}|^2+|Y^{m,\vare}_{s}|^2)ds\nonumber\\
\leq\!\!\!\!\!\!\!\!&&C_{T}(1+\|x\|^3_{\eta}+|y|^3)\sup_{t\in[0,T]}\int^{t}_0 \left( s^{-1+\frac{\eta}{2}}+s^{\frac{\alpha-\beta}{2}}\right)ds\nonumber\\
&&+C_{T}(1+|x|^3+|y|^3)\sup_{t\in[0,T]}\int^{t}_0(t-s)^{-1+\alpha/2}ds\nonumber\\
\leq\!\!\!\!\!\!\!\!&&C_{T}(1+\|x\|^3_{\eta}+|y|^3).\label{Lambda3(T)}
\end{eqnarray}

For the term $\tilde{\Lambda}^{m,\vare}_4(T)$. It is easy to see
\begin{eqnarray}
\tilde{\Lambda}^{m,\vare}_4(T)\leq\!\!\!\!\!\!\!\!&&\sup_{t\in[0,T]}\int^{t}_0 \EE\left|D_x\tilde{\Phi}^t_m(X_{s}^{m,\vare},Y^{m,\vare}_{s})\cdot (AX_{s}^{m,\vare}) \right|ds\nonumber\\
&&+\sup_{t\in[0,T]}\int^{t}_0 \EE\left|D_x\tilde{\Phi}^t_m(X_{s}^{m,\vare},Y^{m,\vare}_{s})\cdot F^m_1(X_{s}^{m,\vare},Y^{m,\vare}_{s}) \right|ds\nonumber\\
&&+\sup_{t\in[0,T]}\int^{t}_0\EE\left|\frac{1}{2}\sum_{k=1}^m \left[D_{xx}\tilde{\Phi}_m^t(X_{s}^{m,\vare},Y^{m,\vare}_{s})\cdot\left(G^m_1e_k,G^m_1e_k\right)\right]\right|ds\nonumber\\
:=\!\!\!\!\!\!\!\!&&\sum^3_{i=1}\tilde{\Lambda}_{4i}^{m,\varepsilon}(T).\label{Lambda4(T)}
\end{eqnarray}
By \eref{E122}, \eref{Yvare} and \eref{Xvare2A}, we get for any $\vare>0, m\in \mathbb{N}_{+}$ and $\eta\in (0,1)$,
\begin{eqnarray}
\tilde{\Lambda}^{m,\vare}_{41}(T)\leq\!\!\!\!\!\!\!\!&&C_{T}\sup_{t\in[0,T]}\EE\int^{t}_0 \|X_{s}^{m,\vare}\|_{2}(1+|X^{m,\vare}_{s}|+|Y^{m,\vare}_{s}|)ds\nonumber\\
\leq\!\!\!\!\!\!\!\!&&C_{T}\int^{T}_0 \left[\EE\|X_{s}^{m,\vare}\|^2_{2}\right]^{1/2}\left[\EE(1+|X^{m,\vare}_{s}|^2+|Y^{m,\vare}_{s}|^2 ) \right]^{1/2}ds\nonumber\\
\leq\!\!\!\!\!\!\!\!&&C_{T}(1+\|x\|^2_{\eta}+|y|^2)\int^{T}_0 \left( s^{-1+\frac{\eta}{2}}+s^{\frac{\alpha-\beta}{2}}\right)ds\nonumber\\
\leq\!\!\!\!\!\!\!\!&&C_{T}(1+\|x\|^{2}_{\eta}+|y|^{2}).
\end{eqnarray}
By \eref{E122}, \eref{E221}, \eref{Xvare} and \eref{Yvare}, we have
\begin{eqnarray}
\tilde{\Lambda}^{m,\vare}_{42}(T)\leq\!\!\!\!\!\!\!\!&&C_{T}\int^T_0 \EE(1+|X_{s}^{m,\vare}|^2+|Y^{m,\vare}_{s}|^2)ds\nonumber\\
\leq\!\!\!\!\!\!\!\!&&C_{T}(1+|x|^{2}+|y|^{2})
\end{eqnarray}
and
\begin{eqnarray}
\tilde{\Lambda}^{m,\vare}_{43}(T)\leq\!\!\!\!\!\!\!\!&&C_T  \int^T_0\EE\left[(1+|X^{m,\vare}_s|+|Y^{m,\vare}_s|)|(-A)^{\delta/2}G^m_1|^{2}_{HS}\right]ds\nonumber\\
\leq\!\!\!\!\!\!\!\!&&C_T  \int^T_0\EE\left[(1+|X^{m,\vare}_s|+|Y^{m,\vare}_s|)(1+|X_{s}^{m,\vare}|^{2})\right]ds\nonumber\\
\leq\!\!\!\!\!\!\!\!&& C_T(1+|x|^3+|y|^3). \label{F5.22}
\end{eqnarray}

Finally, combining estimates \eref{Lambda12(T)}-\eref{F5.22}  we have
\begin{eqnarray*}
\sup_{t\in[0,T],m\geq 1}\left|\EE\phi(X^{m,\vare}_{t})-\EE\phi(\bar{X}^m_{t})\right|
\leq C_{T}(1+\|x\|^3_{\eta}+|y|^3)\vare.
\end{eqnarray*}
The proof is complete.

\section{Appendix}

We first give some a priori estimates of the solution $(X_{t}^{\varepsilon}, Y_{t}^{\varepsilon})$ (see Lemma \ref{L6.1}). Then we study the Galerkin approximation of the system \eref{main equation} (see Lemma \ref{GA1}). Next, we prove the time H\"{o}lder continuity of the solution $(X_{t}^{\varepsilon}, Y_{t}^{\varepsilon})$ (see Lemma \ref{L6.3}) and give an estimation of $|AX^{\vare,m}_t|$ (see Lemma \ref{L6.4} and Remark \ref{Re6.5}). Finally, we prove the finite dimensional approximation of the frozen equation \eref{FZE} (see Lemma \ref{GA2}). Note that the assumptions \ref{A1} and \ref{A2} hold through this section.

\begin{lemma} \label{L6.1}
For any $x,y\in H$, $p\geq 1$ and $T>0$, there exist constants $C_{p,T},\ C_{T}>0$ such that the solution $(X^{\vare}_t, Y^{\vare}_t)$ of system \eref{main equation} satisfies
\begin{eqnarray}
&&\mathbb{E}\left(\sup_{t\in [0,T]}|X_{t}^{\varepsilon}|^p \right)\leq  C_{p,T}(1+ |x|^p+|y|^p),\quad \forall \vare>0,\label{AXvare}\\
&&\sup_{t\in [0,T]}\mathbb{E} |Y_{t}^{\varepsilon} |^p\leq C_{p,T}(1+|x|^p+|y|^p ),\quad \forall \vare>0;\label{AYvare}.
\end{eqnarray}
Moreover, if condition \eref{CFSUP} holds, the following holds:
\begin{eqnarray}
\mathbb{E}\left(\sup_{t\in[0,T]}|Y_{t}^{\varepsilon} |^p\right)\leq C_{p,T}\vare^{-p/2}+C|y|^p,\quad \forall \vare>0.\label{AYvareH}
\end{eqnarray}
\end{lemma}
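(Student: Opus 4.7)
The plan is to prove the three estimates in an interlocking sequence: I will first bound $\EE|Y_t^{\vare}|^p$ in terms of $\sup_{s\le t}\EE|X_s^{\vare}|^p$ using It\^o's formula and the dissipativity condition, then close the slow-component estimate via the mild formula and Gronwall's inequality to obtain \eref{AYvare} and \eref{AXvare}, and finally handle \eref{AYvareH} separately under the extra boundedness assumption \eref{CFSUP}. It suffices to work with $p\ge 2$ throughout, as smaller $p$ follow by H\"older.

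For the coupled moment bound, applying It\^o's formula to $|Y_t^{\vare}|^p$ and using
\[
\langle y,\, Ay+F_2(x,y)\rangle \le -\tfrac{\lambda_1-L_{F_2}}{2}|y|^2 + C(1+|x|^2),
\]
condition \eref{BOG2} and Young's inequality exactly as in Lemma~\ref{L3.2}(i), the $1/\vare$ prefactor in the drift yields
\[
\frac{d}{dt}\EE|Y_t^\vare|^p \le -\frac{p(\lambda_1-L_{F_2})}{4\vare}\EE|Y_t^\vare|^p + \frac{C_p}{\vare}\bigl(1+\EE|X_t^\vare|^p\bigr).
\]
Comparison together with the uniform bound $\vare^{-1}\!\int_0^t e^{-c(t-s)/\vare}ds\le 1/c$ then gives
\[
\EE|Y_t^\vare|^p \le e^{-ct/\vare}|y|^p + C_p\bigl(1+\sup_{s\le t}\EE|X_s^\vare|^p\bigr).
\]
Feeding this into the mild formula for $X^\vare$, estimating the drift by H\"older and the stochastic convolution by a Burkholder--Davis--Gundy-type inequality together with the Lipschitz bound on $G_1$, I obtain
\[
\EE\sup_{r\le t}|X_r^\vare|^p \le C_p|x|^p + C_{p,T}\!\int_0^t\!\bigl(1+|y|^p+\EE\sup_{r\le s}|X_r^\vare|^p\bigr)ds,
\]
and Gronwall's inequality yields \eref{AXvare}; substituting back gives \eref{AYvare}.

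For \eref{AYvareH}, condition \eref{CFSUP} makes $|F_2(x,y)|\le L_{F_2}|y|+C$ and $|G_2(x,y)|_{HS}\le C$ uniformly in $x$. Applying It\^o to $|Y_t^\vare|^p$, using dissipativity to absorb the $|Y^\vare|^p/\vare$ term and dominating the remaining drift by $C_T/\vare$, one arrives at $\EE\sup_{t\le T}|Y_t^\vare|^p\le |y|^p+C_T/\vare+C_p\EE\sup_{t\le T}|N_t^\vare|$, where $N^\vare$ denotes the martingale part; the Burkholder--Davis--Gundy inequality combined with the uniform bound on $|G_2|_{HS}$ gives $\EE\sup_{t\le T}|N_t^\vare|\le C\vare^{-1/2}T^{1/2}\bigl(\EE\sup_{t\le T}|Y_t^\vare|^p\bigr)^{(p-1)/p}$, and Young's inequality absorbs half of $\EE\sup|Y^\vare|^p$ at the cost of an additive term of order $\vare^{-p/2}$, establishing \eref{AYvareH}. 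The main obstacle throughout is reconciling the singular prefactors $1/\vare$ and $1/\sqrt\vare$ with two different target scalings: in the coupled step dissipativity produces a matching $1/\vare$ on the negative side so the convolution kernel is $\vare$-uniformly integrable and the bound is $\vare$-independent, whereas in the sup estimate the in-time $L^p$ dissipation is lost after taking suprema, forcing an explicit Young-inequality balance between the $\vare^{-1}$ drift contribution and the $\vare^{-1/2}$ martingale contribution to recover precisely the $\vare^{-p/2}$ rate.
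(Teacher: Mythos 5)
Your proof is correct and, for \eref{AXvare} and \eref{AYvare}, follows essentially the same route as the paper: It\^o's formula plus the dissipativity of $A$ to get the differential inequality for $\EE|Y^\vare_t|^p$ with the $\vare$-uniform convolution bound, then the mild formula, a maximal/BDG inequality for the stochastic convolution, and Gronwall. For \eref{AYvareH} you argue directly on the $\vare$-dependent equation, balancing the $\vare^{-1/2}$ martingale against $\EE\sup_t|Y^\vare_t|^p$ via BDG and Young; the paper instead rescales time ($\tilde Y^\vare_t:=Y^\vare_{t\vare}$) to reduce to the unit-scale equation on $[0,T/\vare]$ and then invokes exactly this BDG--Young computation from Lemma \ref{L3.2}(iii), so the two are the same estimate packaged differently. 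One small point: bounding the residual drift by $C_T/\vare$ rather than Young-balancing it to $C_{p,T}\vare^{-p/2}$ is harmless only because $\vare^{-1}\leq\vare^{-p/2}$ for $p\geq 2$ and $\vare\in(0,1]$, which is the regime of interest; applying Young to that term as well (as the paper's rescaled argument implicitly does) gives the stated rate without this restriction.
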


\begin{proof}
Applying It\^{o}'s formula (see e.g., \cite[Theorem 6.1.1]{LR}) and taking expectation, we obtain
\begin{eqnarray*}\label{ItoFormu 001}
\mathbb{E}|Y_{t}^{\vare}|^{p}=\!\!\!\!\!\!\!\!&&|y|^{p}+\frac{p}{\varepsilon}\mathbb{E}\left(\int_{0} ^{t}| Y_{s}^{\varepsilon}|^{p-2}\langle AY_{s}^{\varepsilon},Y_{s}^{\varepsilon}\rangle ds\right)\nonumber \\
 \!\!\!\!\!\!\!\!&& +\frac{p}{\varepsilon}\mathbb{E}\left(\int_{0} ^{t}| Y_{s}^{\varepsilon}|^{p-2}\langle F_2(X_{s}^{\varepsilon},Y_{s}^{\varepsilon}),Y_{s}^{\varepsilon}\rangle ds \right) + \frac{p}{2\varepsilon}\mathbb{E}\left(\int_{0} ^{t}|Y_{s}^{\varepsilon}|^{p-2}|G_2(X_{s}^{\varepsilon},Y_{s}^{\varepsilon})|_{HS}^2ds\right)
 \nonumber \\
 \!\!\!\!\!\!\!\!&&+\frac{p(p-2)}{2\varepsilon}\mathbb{E}\left(\int_{0} ^{t}|Y_{s}^{\varepsilon}|^{p-4}|G_2(X_{s}^{\varepsilon},
 Y_{s}^{\varepsilon})^*Y^\varepsilon_s|^2ds\right).\nonumber
\end{eqnarray*}

Note that $\langle Ax, x\rangle\leq -\lambda_{1}|x|^2$ and $\lambda_1-L_{F_2}>0$, by assumptions \ref{A1}, \ref{A2} and Young's inequality, we have
\begin{eqnarray*}
\frac{d}{dt}\mathbb{E}|Y_{t}^{\vare}|^{p}=\!\!\!\!\!\!\!\!&&-\frac{p}{\varepsilon}\mathbb{E}\left(| Y_{t}^{\varepsilon}|^{p-2}\|Y_{t}^{\varepsilon}\|_{1}^2 \right) +\frac{p}{\varepsilon}\mathbb{E}\left[| Y_{t}^{\varepsilon}|^{p-2}\langle F_2(X_{t}^{\varepsilon},Y_{t}^{\varepsilon}),Y_{t}^{\varepsilon}\rangle\right] \nonumber \\
 \!\!\!\!\!\!\!\!&& + \frac{p}{2\varepsilon}\mathbb{E}\left[|Y_{t}^{\varepsilon}|^{p-2}
|G_2(X_{t}^{\varepsilon},Y_{t}^{\varepsilon})|_{HS}^2\right]
 +\frac{p(p-2)}{2\varepsilon}\mathbb{E}\left[|Y_{t}^{\varepsilon}|^{p-4}
 |G_2(X_{t}^{\varepsilon},Y_{t}^{\varepsilon})^*Y^\varepsilon_t|^2\right]\nonumber \\
 \leq\!\!\!\!\!\!\!\!&&-\frac{p\lambda_{1}}{\varepsilon}\mathbb{E}|Y_{t}^
 {\varepsilon}|^{p}+
\frac{p}{\varepsilon}\mathbb{E}\Big[|Y_{t}^{\varepsilon}|^{p-2}\left(C|Y_{t}^{\varepsilon}|+C|X_{t}^{\varepsilon}|\cdot|Y_{t}^{\varepsilon}|+L_{F_2} |Y_{t}^{\varepsilon}|^2)\right) \Big]\nonumber \\
 \!\!\!\!\!\!\!\!&& +
 \frac{C_p}{\varepsilon}\mathbb{E}\left[|Y_{t}^{\varepsilon}|^{p-2}
 (1+|X_{t}^{\varepsilon}|^2+|Y_{t}^{\varepsilon}|^{2\zeta})\right]
\nonumber \\
\leq\!\!\!\!\!\!\!\!&&-\frac{p(\lambda_1-L_{F_2})}{2\varepsilon}\mathbb{E}|Y_{t}^{\varepsilon}
|^{p}+\frac{C_{p}}{\varepsilon}\EE|X_{t}^{\varepsilon}|^{p}
+\frac{C_{p}}{\varepsilon}.\label{4.4.2}
\end{eqnarray*}
By the comparison theorem, it is easy to see that
\begin{eqnarray}
\mathbb{E}|Y_{t}^{\varepsilon}|^{p}\leq\!\!\!\!\!\!\!\!&&|y|^{p}e^{-\frac{p(\lambda_1-L_{F_2})}{2\varepsilon}t}+\frac{C_{p}}{\varepsilon}\int^t_0
e^{-\frac{p(\lambda_1-L_{F_2})(t-s)}{\varepsilon}}\left(1+\EE|X_{s}^{\varepsilon}|^{p}\right)ds.\label{EY}
\end{eqnarray}

For any $p\geq1$, by \eref{EY} and Maximal inequality (see \cite[(1.13)]{HS}, we obtain that
\begin{eqnarray*}
\mathbb{E}\left(\sup_{t\in [0,T]}|X_{t}^{\varepsilon} |^p \right)\leq\!\!\!\!\!\!\!\!&&C_p|x|^p\!+\!C_p\int^T_0 \!\EE|X^{\varepsilon}_s|^p ds\!+\!C_p\int^T_0\!\EE|Y^{\varepsilon}_s|^pds+C_{p,T}\\
&&+C_p\mathbb{E}\left(\sup_{t\in [0,T]}\left|\int^t_0 e^{(t-s)A}G_1(X_{s}^{\varepsilon})dW_s^1\right|^p \right)\\
\leq\!\!\!\!\!\!\!\!&&C_{p,T}(1+|x|^p+|y|^p)+C_p\int^T_0 \EE|X^{\varepsilon}_s|^p ds.
\end{eqnarray*}
By Gronwall's inequality, we get
\begin{eqnarray*}
\mathbb{E}\left(\sup_{t\in [0,T]}|X_{t}^{\varepsilon}|^p \right)\leq\!\!\!\!\!\!\!\!&&C_{p,T}(1+|x|^p+|y|^p),
\end{eqnarray*}
which implies \eref{AYvare} holds by \eref{EY}.

\vspace{0.2cm}
For fixed $\vare>0$. Define $\tilde{Y}^{\vare}_{t}:=Y^{\vare}_{t\vare}$, then it is easy to check that the process $\{\tilde{Y}^{\vare}_{t}\}_{t\geq 0}$ satisfies
\begin{eqnarray*}
\left\{ \begin{aligned}
&d\tilde{Y}^{\vare}_{t}=\left[A\tilde{Y}^{\vare}_{t}+F_2(X^{\vare}_{t\vare},\tilde{Y}^{\vare}_{t})\right]dt+G_2(X^{\vare}_{t\vare},\tilde{Y}^{\vare}_{t})d \tilde W_{t}^2,\\
&\tilde{Y}^{\vare}_{0}=y,
\end{aligned} \right.
\end{eqnarray*}
where $\{\tilde{W}_{t}^2:=\frac{1}{\vare^{1/2}}W_{t\vare}^2\}_{t\geq 0}$ is also a cylindrical-wiener process, which has the same distribution with $\{W_t^2\}_{t\geq 0}$.
Note that under the condition \eref{CFSUP}, by following a similar argument as in the proof of \eref{FEq1}, we get
\begin{eqnarray*}
\EE\left[\sup_{t\in [0,T]}|\tilde{Y}^{\vare}_{t}|^p\right]\leq C_p T^{p/2}+C|y|^p.
\end{eqnarray*}
As a consequence,
\begin{eqnarray*}
\EE\left[\sup_{t\in [0,T]}|Y^{\vare}_{t}|^p\right]=\EE\left[\sup_{t\in [0, T/\vare]}|\tilde{Y}^{\vare}_{t}|^p\right]\leq C_{p,T}\vare^{-p/2}+C|y|^p,\quad \forall \vare,T>0.
\end{eqnarray*}
The proof is complete.
\end{proof}

\vspace{0.3cm}
Recall the Galerkin approximation \eref{Ga mainE} of system \eref{main equation}. We have the following a priori estimates and  approximation.
\begin{lemma} \label{GA1}
For any $\vare>0, (x,y)\in H\times H$ and $m\in \mathbb{N}_{+}$, system \eref{Ga mainE} has a unique mild solution $(X^{m,\vare}_t, Y^{m,\vare}_t)\in H\times H$, i.e., $\PP$-a.s.,
\begin{equation}\left\{\begin{array}{l}\label{A mild solution of F}
\displaystyle
X^{m,\varepsilon}_t=e^{tA}x^m+\int^t_0e^{(t-s)A}F^m_1(X^{m,\varepsilon}_s, Y^{m,\varepsilon}_s)ds+\int^t_0 e^{(t-s)A}G^m_1(X^{m,\varepsilon}_s)d\bar {W}^{1,m}_s,\nonumber\\
Y^{m,\varepsilon}_t=e^{tA/\varepsilon}y^m+\frac{1}{\varepsilon}\int^t_0e^{(t-s)A/\varepsilon}F^m_2(X^{m,\varepsilon}_s,Y^{m,\varepsilon}_s)ds
+\frac{1}{\sqrt{\varepsilon}}\int^t_0 e^{(t-s)A/\varepsilon}G^m_2(X^{m,\varepsilon}_s,Y^{m,\varepsilon}_s)d\bar {W}^{2,m}_s.\nonumber
\end{array}\right.
\end{equation}
Moreover, for any $p\geq 1$ and $T>0$, there exist constants $C_{p,T}, C_{T}>0$ such that any $\vare>0$,
\begin{eqnarray}
&&\sup_{m\geq 1}\mathbb{E}\left(\sup_{t\in [0,T]}|X_{t}^{m,\varepsilon}|^p \right)\leq  C_{p,T}(1+ |x|^p+|y|^p);\label{Xvare}\\
&&\sup_{t\in [0,T],m\geq 1}\mathbb{E} |Y_{t}^{m,\varepsilon} |^p\leq C_{p,T}(1+|x|^p+|y|^p );\label{Yvare}\\
&&\lim_{m\rightarrow \infty}\EE\left(\sup_{t\in[0,T]}|X^{m,\vare}_t-X^{\vare}_t|^p\right)=0.\label{FA1}
\end{eqnarray}
Moreover, if condition \eref{CFSUP} holds, we have
\begin{eqnarray}
\sup_{m\geq 1}\mathbb{E}\left(\sup_{t\in[0,T]}|Y_{t}^{m,\varepsilon} |^p\right)\leq C_{p,T}\vare^{-p/2}+C|y|^p.\label{SYvare}
\end{eqnarray}
\end{lemma}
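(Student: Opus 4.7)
The plan is to handle the four conclusions in sequence. First, for the existence and uniqueness of $(X^{m,\vare}, Y^{m,\vare})$, I note that on the finite dimensional space $H_m$ the operator $A$ is bounded and the coefficients $F^m_i, G^m_i$ are Lipschitz (with Lipschitz constants dominated by those in \ref{A1}, since $\pi_m$ is a contraction). Classical SDE theory then yields a unique global mild solution. For the moment estimates \eref{Xvare} and \eref{Yvare}, I would mimic Lemma \ref{L6.1}: applying It\^o's formula to $|Y^{m,\vare}_t|^p$ and using $\langle Ay, y\rangle \le -\lambda_1|y|^2$ on $H_m$, together with the Lipschitz bound on $F^m_2$ and the growth bound \eref{BOG2} on $G^m_2$ (both uniform in $m$), Young's inequality produces
\begin{eqnarray*}
\frac{d}{dt}\EE|Y^{m,\vare}_t|^p \le -\frac{p(\lambda_1-L_{F_2})}{2\vare}\EE|Y^{m,\vare}_t|^p + \frac{C_p}{\vare}(1+\EE|X^{m,\vare}_t|^p),
\end{eqnarray*}
and a comparison argument gives $\EE|Y^{m,\vare}_t|^p \le e^{-ct/\vare}|y|^p + C_p \sup_{s\le t}\EE|X^{m,\vare}_s|^p + C_p$ with constants independent of $m$. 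Inserting this in the mild formulation for $X^{m,\vare}_t$, Burkholder--Davis--Gundy and Gronwall's inequality yield \eref{Xvare}, which in turn closes the loop for \eref{Yvare}.

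Second, for the Galerkin convergence \eref{FA1}, I would write
\begin{eqnarray*}
X^{\vare}_t - X^{m,\vare}_t = e^{tA}(I-\pi_m)x + \int_0^t e^{(t-s)A}\left[F_1(X^{\vare}_s, Y^{\vare}_s) - \pi_m F_1(X^{m,\vare}_s, Y^{m,\vare}_s)\right]ds + \Delta^{m,\vare}_t,
\end{eqnarray*}
where $\Delta^{m,\vare}_t$ records the stochastic-integral discrepancy. Adding and subtracting $\pi_m F_1(X^{\vare}_s, Y^{\vare}_s)$ splits the drift into a Galerkin-error piece $(I-\pi_m)F_1(X^{\vare}_s, Y^{\vare}_s)$ and a Lipschitz piece controlled by $|X^{\vare}_s-X^{m,\vare}_s|+|Y^{\vare}_s-Y^{m,\vare}_s|$; the noise term admits an analogous decomposition into $(I-\pi_m)G_1(X^{\vare}_s)\,dW^1_s$, a high-frequency tail $\sum_{k>m}G_1(X^{\vare}_s)e_k\,dW^{1,k}_s$, and a Lipschitz remainder. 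By dominated convergence, using the uniform moment bounds already established, the $(I-\pi_m)$-pieces tend to zero in $L^p$ as $m\to\infty$. The remaining terms, combined via BDG, produce an inequality of the type $\EE\sup_{s\le t}|X^{\vare}_s - X^{m,\vare}_s|^p \le \epsilon_m(T) + C_{p,\vare,T}\int_0^t \EE\sup_{r\le s}\left(|X^{\vare}_r - X^{m,\vare}_r|^p + |Y^{\vare}_r - Y^{m,\vare}_r|^p\right)ds$, with $\epsilon_m(T)\to 0$. A parallel analysis of the fast equation, using the dissipativity \eref{SDC} to absorb the $1/\vare$ prefactors at fixed $\vare$, yields a companion estimate for the $Y$-difference; a joint Gronwall argument then delivers \eref{FA1}.

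Third, the supremum bound \eref{SYvare} under condition \eref{CFSUP} follows from the time-change trick of Lemma \ref{L6.1}: setting $\tilde Y^{m,\vare}_t := Y^{m,\vare}_{\vare t}$, the process $\tilde Y^{m,\vare}$ solves an un-rescaled SDE driven by the Brownian motion $\tilde W^{2,m}_t := \vare^{-1/2}\bar W^{2,m}_{\vare t}$, with sup-bounded diffusion coefficient and $|F^m_2(x,\tilde y)|\le L_{F_2}|\tilde y| + C$ uniformly in $x$. Reproducing the It\^o computation leading to \eref{I1} and applying BDG plus Young's inequality gives $\EE\sup_{t\le T}|\tilde Y^{m,\vare}_t|^p \le C_p T^{p/2} + C|y|^p$; undoing the time change then yields \eref{SYvare}.

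The main obstacle is \eref{FA1}: the two mild representations are driven by \emph{different} stochastic integrators ($dW^1_s$ versus $d\bar W^{1,m}_s$), so one must carefully split the noise into a high-frequency tail that contributes a vanishing Galerkin error and a low-frequency part amenable to a Lipschitz/Gronwall closure. Compounded with the coupling through $Y^{m,\vare}-Y^{\vare}$, which carries the $1/\vare$ scaling, this forces a simultaneous Gronwall estimate for the slow and fast differences rather than a one-shot bound on either alone.
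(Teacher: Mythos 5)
Your proposal is correct and follows essentially the same route as the paper: finite-dimensional SDE theory for well-posedness, the It\^o/comparison/Gronwall scheme of Lemma \ref{L6.1} (uniform in $m$) for \eref{Xvare}--\eref{Yvare} and the time-change trick for \eref{SYvare}, and for \eref{FA1} the same decomposition into $(\pi_m-I)$ Galerkin errors, Lipschitz remainders, and stochastic-integral discrepancies killed by dominated convergence. The only cosmetic difference is that the paper closes \eref{FA1} by two sequential Gronwall estimates (first on $\sup_t\EE|Y^{m,\vare}_t-Y^{\vare}_t|^p$ with $\vare$-dependent constants, then on the $X$-difference) rather than your joint Gronwall, and it simply allows constants $C_{p,T,\vare}$ instead of invoking dissipativity to absorb the $1/\vare$ factors; both variants work.
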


\begin{proof}
Under the assumptions \ref{A1} and \ref{A2}, it is easy to show that the existence and uniqueness of the mild solution of system \eref{Ga mainE} . The estimates \eref{Xvare}, \eref{Yvare} and \eref{SYvare} can be proved by following the same argument as in the proof of Lemma \ref{L6.1}. Next, we prove the approximation \eref{FA1}.

Note that
\begin{eqnarray*}
Y^{m,\vare}_t-Y^{\vare}_t=\!\!\!\!\!\!\!\!&&e^{tA/\vare}(y^m-y)+\frac{1}{\vare}\int^{t}_{0}e^{(t-s)A/\vare}(\pi_m-I)F_2(X^{\vare}_s, Y^{\vare}_s)ds \nonumber\\ &&+\frac{1}{\vare}\int^{t}_{0}e^{(t-s)A/\vare}[F^{m}_2(X^{m,\vare}_s,Y^{m,\vare}_s)-F^{m}_2(X^{\vare}_s,Y^{\vare}_s)]ds\\
&&+\frac{1}{\sqrt{\varepsilon}}\int^t_0 e^{(t-s)A/\vare}\left[G^m_2(X^{m,\varepsilon}_s,Y^{m,\vare}_s)-G^m_2(X^{\varepsilon}_s,Y^{\vare}_s)\right]d \bar{W}^{2,m}_s\nonumber\\
&&+\frac{1}{\sqrt{\varepsilon}}\int^t_0 e^{(t-s)A/\vare}G^m_2(X^{\varepsilon}_s,Y^{\vare}_s)d \bar{W}^{2,m}_s-\frac{1}{\sqrt{\varepsilon}}\int^t_0 e^{(t-s)A/\vare}G_2(X^{\varepsilon}_s,Y^{\vare}_s)dW^2_s.
\end{eqnarray*}
Then it is clear that for any $t>0$,
\begin{eqnarray*}
\EE|Y^{m,\vare}_t-Y^{\vare}_t|^p\leq\!\!\!\!\!\!\!\!&&C_p|y^m-y|^p+C_{p,\vare}\EE\int^{t}_{0}|(\pi_m-I)F_2(X^{\vare}_s, Y^{\vare}_s)|^pds\nonumber\\
&&+C_{p,\vare}\EE\int^t_0|F^{m}_2(X^{m,\vare}_s,Y^{m,\vare}_s)-F^{m}_2(X^{\vare}_s,Y^{\vare}_s)|^pds\nonumber\\
&&+C_{p,\vare}\int^t_0|G^m_2(X^{m,\varepsilon}_s,Y^{m,\vare}_s)-G^m_2(X^{\varepsilon}_s,Y^{\vare}_s)|^p_{HS}ds\nonumber\\
&&+C_{p,\vare}\EE\left|\int^t_0 e^{(t-s)A/\vare}G^m_2(X^{\varepsilon}_s,Y^{\vare}_s)d \bar{W}^{2,m}_s-\int^t_0 e^{(t-s)A/\vare}G_2(X^{\varepsilon}_s,Y^{\vare}_s)dW^2_s\right|^p\nonumber\\
\leq\!\!\!\!\!\!\!\!&&C_p|y^m-y|^p+C_{p,\vare}\EE\int^{t}_{0}|(\pi_m-I)F_2(X^{\vare}_s, Y^{\vare}_s)|^pds\nonumber\\&&+C_{p,\vare}\int^t_0\EE|X^{m,\vare}_s-X^{\vare}_s|^pds+\int^t_0\EE|Y^{m,\vare}_s-Y^{\vare}_s|^pds\nonumber\\
&&+C_{p,\vare}\EE\left|\int^t_0 e^{(t-s)A/\vare}G^m_2(X^{\varepsilon}_s,Y^{\vare}_s)d \bar{W}^{2,m}_s-\int^t_0 e^{(t-s)A/\vare}G_2(X^{\varepsilon}_s,Y^{\vare}_s)dW^2_s\right|^p.
\end{eqnarray*}
By Gronwall's inequality, we obtain
\begin{eqnarray}
\sup_{t\in[0,T]}\EE|Y^{m,\vare}_t-Y^{\vare}_t|^p\leq\!\!\!\!\!\!\!\!&&C_{p,T,\vare}|y^m-y|^p+C_{p,T,\vare}\EE\int^{T}_{0}|(\pi_m-I)F_2(X^{\vare}_s, Y^{\vare}_s)|^pds\nonumber\\
&&\!\!\!\!\!\!\!\!\!\!\!\!\!\!\!\!\!\!\!\!+C_{p,T,\vare}\int^T_0\EE|X^{m,\vare}_s-X^{\vare}_s|^pds+C_{p,T,\vare}\!\!\sup_{t\in[0,T]}\EE\left|\int^t_0 \!\!e^{(t-s)A/\vare}G^m_2(X^{\varepsilon}_s,Y^{\vare}_s)d \bar{W}^{2,m}_s\right.\nonumber\\
&&\quad\quad\left.-\int^t_0 \!\!e^{(t-s)A/\vare}G_2(X^{\varepsilon}_s,Y^{\vare}_s)dW^2_s\right|^p.\label{Y^m-Y}
\end{eqnarray}

It is easy to see that for any $t\geq 0$,
\begin{eqnarray*}
X^{m,\vare}_t-X^{\vare}_t=\!\!\!\!\!\!\!\!&&e^{tA}(x^m-x)+\int^{t}_{0}e^{(t-s)A}(\pi_m-I)F_1(X^{\vare}_s, Y^{\vare}_s)ds\nonumber\\
&&+\int^{t}_{0}e^{(t-s)A}\left[F^{m}_1(X^{m,\vare}_s,Y^{m,\vare}_s)-F^{m}_1(X^{\vare}_s,Y^{\vare}_s)\right]ds\nonumber\\
&&+\int^t_0 e^{(t-s)A}\left[G^m_1(X^{m,\varepsilon}_s)-G^m_1(X^{\varepsilon}_s)\right]d \bar W^{1,m}_s\\
&&+\int^t_0 e^{(t-s)A}G^m_1(X^{\varepsilon}_s)d \bar W^{1,m}_s-\int^t_0 e^{(t-s)A}G_1(X^{\varepsilon}_s)d  W^{1}_s.
\end{eqnarray*}
Then for any $T>0$, $p\geq 1$, by the Burkholder-Davis-Gundy inequality and \eref{Y^m-Y}, we get
\begin{eqnarray}
&&\EE\left(\sup_{t\in [0,T]}|X^{m,\vare}_t-X^{\vare}_t|^p\right)\nonumber\\
\leq\!\!\!\!\!\!\!\!&&C_p|x^m-x|^p+C_{p,T}\int^{T}_{0}\EE|(\pi_m-I)F_1(X^{\vare}_s, Y^{\vare}_s)|^pds\nonumber\\
&&+C_{p,T}\EE\int^{T}_{0}|X^{m,\vare}_s-X^{\vare}_s|^pds+C_{p,T}\EE\int^{T}_{0}|Y^{m,\vare}_s-Y^{\vare}_s|^pds\nonumber\\
&&+C_p\EE\left[\sup_{t\in [0,T]}\left|\int^t_0 e^{(t-s)A}G^m_1(X^{\varepsilon}_s)d \bar W^{1,m}_s-\int^t_0 e^{(t-s)A}G_1(X^{\varepsilon}_s)dW^1_s\right|^p\right]\nonumber\\
\leq\!\!\!\!\!\!\!\!&&C_p|x^m-x|^p+C_{p,T}|y^m-y|^p+C_{p,T}\EE\int^{T}_{0}|X^{m,\vare}_s-X^{\vare}_s|^p ds\nonumber\\
&&+C_{p,T}\!\int^{T}_{0}\!\!\EE|(\pi_m-I)F_1(X^{\vare}_s, Y^{\vare}_s)|^pds\!+\!C_{p,T}\!\int^{T}_{0}\!\!\EE|(\pi_m-I)F_2(X^{\vare}_s, Y^{\vare}_s)|^p ds\nonumber\\
&&+C_p\EE\left[\sup_{t\in [0,T]}\left|\int^t_0 e^{(t-s)A}G^m_1(X^{\varepsilon}_s)d \bar W^{1,m}_s-\int^t_0 e^{(t-s)A}G_1(X^{\varepsilon}_s)dW^1_s\right|^p\right]\nonumber\\
&&+C_{p,T,\vare}\sup_{t\in[0,T]}\EE\left|\int^t_0 e^{(t-s)A/\vare}G^m_2(X^{\varepsilon}_s,Y^{\vare}_s)d \bar{W}^{2,m}_s-\int^t_0 e^{(t-s)A/\vare}G_2(X^{\varepsilon}_s,Y^{\vare}_s)dW^2_s\right|^p.\label{X^m-X}
\end{eqnarray}
The Gronwall's inequality implies
\begin{eqnarray*}
\EE\left(\sup_{t\in [0,T]}|X^{m,\vare}_t-X^{\vare}_t|^p\right)\leq\!\!\!\!\!\!\!\!&&C_{p,T}(|x^m-x|^p+|y^m-y|^p)\nonumber\\
&&\!\!\!\!\!\!\!\!\!\!\!\!\!\!\!\!\!\!\!\!\!\!\!\!\!\!\!\!\!\!\!\!+C_{p,T}\int^{T}_{0}\EE|(\pi_m-I)F_1(X^{\vare}_s, Y^{\vare}_s)|^pds\!+\!C_{p,T}\int^{T}_{0}\!\!\EE|(\pi_m-I)F_2(X^{\vare}_s, Y^{\vare}_s)|^p ds\!\nonumber\\
&&\!\!\!\!\!\!\!\!\!\!\!\!\!\!\!\!\!\!\!\!\!\!\!\!\!\!\!\!\!\!\!\!+C_p\EE\left[\sup_{t\in [0,T]}\left|\int^t_0 e^{(t-s)A}G^m_1(X^{\varepsilon}_s)d \bar W^{1,m}_s-\int^t_0 e^{(t-s)A}G_1(X^{\varepsilon}_s)dW^1_s\right|^p\right]\nonumber\\
&&\!\!\!\!\!\!\!\!\!\!\!\!\!\!\!\!\!\!\!\!\!\!\!\!\!\!\!\!\!\!\!\!+C_{p,T,\vare}\sup_{t\in[0,T]}\EE\left|\int^t_0 e^{(t-s)A/\vare}G^m_2(X^{\varepsilon}_s,Y^{\vare}_s)d \bar{W}^{2,m}_s-\int^t_0 e^{(t-s)A/\vare}G_2(X^{\varepsilon}_s,Y^{\vare}_s)dW^2_s\right|^p.
\end{eqnarray*}

It is clear that as $m\rightarrow\infty$,
$$
|x^m-x|^p\rightarrow 0,\quad |y^m-y|^p\rightarrow 0.
$$
By the a priori estimate of $(X^{\vare}_s, Y^{\vare}_s)$ in \eref{L6.1} and the dominated convergence theorem, we have
\begin{eqnarray*}
&&\lim_{m\rightarrow \infty}\int^{T}_{0}\EE|(\pi_m-I)F_1(X^{\vare}_s, Y^{\vare}_s)|^pds=0;\\
&&\lim_{m\rightarrow \infty}\int^{T}_{0}\EE|(\pi_m-I)F_2(X^{\vare}_s, Y^{\vare}_s)|^p ds=0;\\
&&\lim_{m\rightarrow \infty}\EE\left[\sup_{t\in [0,T]}\left|\int^t_0 e^{(t-s)A}G^m_1(X^{\varepsilon}_s)d \bar W^{1,m}_s-\int^t_0 e^{(t-s)A}G_1(X^{\varepsilon}_s)dW^1_s\right|^p\right]=0;\\
&&\lim_{m\rightarrow \infty}\sup_{t\in [0,T]}\EE\left|\int^t_0 e^{(t-s)A/\vare}G^m_2(X^{\varepsilon}_s,Y^{\vare}_s)d \bar{W}^{2,m}_s-\int^t_0 e^{(t-s)A/\vare}G_2(X^{\varepsilon}_s,Y^{\vare}_s)dW^2_s\right|^p=0.
\end{eqnarray*}
Summarizing the above, we get for any $\vare>0$,
\begin{eqnarray*}
\lim_{m\rightarrow \infty}\EE\left(\sup_{t\in [0,T]}|X^{m,\vare}_t-X^{\vare}_t|^p\right)=0.
\end{eqnarray*}
The proof is complete.
\end{proof}

\begin{lemma} \label{L6.3}
For any $ (x,y)\in H\times H$, $m\in \mathbb{N}_{+}$ and $T>0$, there exist constants $C_{p,T}, C_{T}>0$ such that any  $\vare\in (0,1], \eta\in (0,1)$ and $0<s\leq t<T$,
\begin{eqnarray}
&&\sup_{m\geq 1}\left(\EE|X^{m,\vare}_t-X^{m,\vare}_s|^p\right)^{1/p}\leq C_T(t-s)^{\frac{\eta}{2}}s^{-\frac{\eta}{2}}(1+|x|+|y|),\quad\label{THXvare}\\
&&\sup_{m\geq 1}\left(\EE|Y^{m,\vare}_t-Y^{m,\vare}_s|^p\right)^{1/p}\leq C_T\left(\frac{t-s}{\vare}\right)^{\frac{\eta}{2}}s^{-\frac{\eta}{2}}(1+|x|+|y|).\label{THYvare}
\end{eqnarray}
\end{lemma}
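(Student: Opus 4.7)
The plan is to exploit the mild formulation and decompose
\begin{equation*}
X^{m,\vare}_t-X^{m,\vare}_s = (e^{(t-s)A}-I)X^{m,\vare}_s + \int_s^t e^{(t-r)A}F^m_1(X^{m,\vare}_r,Y^{m,\vare}_r)\,dr + \int_s^t e^{(t-r)A}G^m_1(X^{m,\vare}_r)\,d\bar W^{1,m}_r,
\end{equation*}
and analogously for $Y^{m,\vare}_t-Y^{m,\vare}_s$, with $A$ replaced by $A/\vare$, the drift weighted by $\vare^{-1}$, and the noise by $\vare^{-1/2}$. I would then estimate each of the three pieces separately in $L^p(\Omega)$, repeatedly using $s\leq T$ to absorb non-singular factors via $(t-s)^{\eta/2}\leq T^{\eta/2}(t-s)^{\eta/2}s^{-\eta/2}$.

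The two integral pieces are routine. For the slow component, Minkowski's inequality together with the linear growth of $F_1$ (Assumption \ref{A1}) and Lemma \ref{GA1} produces an order-$(t-s)$ bound on the drift integral, while Burkholder--Davis--Gundy combined with $|G_1(x)|_{HS}\leq C(1+|x|)$ yields an order-$(t-s)^{1/2}$ bound on the stochastic integral. For the fast component, after the change of variables $u=(s-r)/\vare$ and the elementary inequality $1-e^{-\lambda_1 x}\leq C x^{\eta/2}$ for $x>0$, both the drift and stochastic integrals are bounded by $C((t-s)/\vare)^{\eta/2}(1+|x|+|y|)$, using Lemma \ref{GA1} and $|G^m_2|_{HS}\leq C(1+|x|+|y|^\zeta)$ from \eref{BOG2}.

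The main obstacle is the semigroup-difference term, which by \eref{P4} is bounded by $C(t-s)^{\eta/2}\|X^{m,\vare}_s\|_\eta$ (respectively $C((t-s)/\vare)^{\eta/2}\|Y^{m,\vare}_s\|_\eta$ for the fast component). The key auxiliary estimate I would establish is
\begin{equation*}
\sup_{m\geq 1}\bigl(\EE\|X^{m,\vare}_s\|_\eta^p\bigr)^{1/p}\leq C\bigl[s^{-\eta/2}|x|+(1+|x|+|y|)\bigr],
\end{equation*}
with the analogous bound for $Y^{m,\vare}_s$ involving $(s/\vare)^{-\eta/2}|y|$. To obtain these I would feed the mild formulae into the $\|\cdot\|_\eta$-norm and apply \eref{P3}: the initial term gives $Cs^{-\eta/2}|x|$ (resp.\ $C(s/\vare)^{-\eta/2}|y|$); the drift integral is dominated via $\int_0^\infty u^{-\eta/2}e^{-\lambda_1 u/2}\,du<\infty$; and the stochastic convolution is handled by Burkholder--Davis--Gundy combined with the Hilbert--Schmidt smoothing $|(-A)^{\eta/2}e^{tA}S|_{HS}\leq Ct^{-\eta/2}e^{-\lambda_1 t/2}|S|_{HS}$ (a consequence of the operator-norm bound in \eref{P3}) and $\int_0^\infty u^{-\eta}e^{-\lambda_1 u}\,du<\infty$ for $\eta\in(0,1)$.

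Substituting these $H^\eta$-moment bounds back into the semigroup increment yields $C(t-s)^{\eta/2}s^{-\eta/2}|x|+C(t-s)^{\eta/2}(1+|x|+|y|)$ for the slow component, and correspondingly $C(t-s)^{\eta/2}s^{-\eta/2}|y|+C((t-s)/\vare)^{\eta/2}(1+|x|+|y|)$ for the fast one, where I use the identity $((t-s)/\vare)^{\eta/2}(s/\vare)^{-\eta/2}=(t-s)^{\eta/2}s^{-\eta/2}\leq ((t-s)/\vare)^{\eta/2}s^{-\eta/2}$, valid whenever $\vare\leq 1$. Combining with the integral estimates and absorbing the non-singular factors via $s\leq T$ produces \eref{THXvare} and \eref{THYvare}.
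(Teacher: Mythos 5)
Your proposal is correct and follows essentially the same route as the paper: the mild-solution decomposition restarted at time $s$, the key auxiliary $H^\eta$-moment bounds $\left(\EE\|X^{m,\vare}_s\|_\eta^p\right)^{1/p}\leq C_T s^{-\eta/2}(1+|x|+|y|)$ and its fast-component analogue (the paper's \eref{X_theta} and \eref{Y_theta}), the application of \eref{P4} to the semigroup-difference term, and the crude order-$(t-s)$ and order-$(t-s)^{1/2}$ (resp. $1-e^{-\lambda_1(t-s)/\vare}\leq C((t-s)/\vare)^{\eta/2}$) bounds on the drift and stochastic integrals. No substantive differences.
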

\begin{proof}
By  \eref{P3} and Minkowski's inequality, we get for any $p>1$, $\eta\in (0,1)$ and $0<t\leq T$,
\begin{eqnarray}
\EE\|X^{m,\varepsilon}_t\|^p_{\eta}
\leq\!\!\!\!\!\!\!\!&&C_p\|e^{tA}x\|^p_{\eta}+C_p\EE\left(\int^t_0\|e^{(t-s)A}F^m_1(X^{m,\vare}_s,Y^{m,\vare}_s)\|_{\eta}ds\right)^p\nonumber\\
&&+C_p\EE\left\|\int^t_0 e^{(t-s)A}G^m_1(X^{m,\vare}_s)d\bar W^{1,m}_s\right\|^p_{\eta}\nonumber\\
\leq\!\!\!\!\!\!\!\!&&Ct^{-\frac{p\eta}{2}}|x|^p+C\left[\int^t_0(t-s)^{-\eta/2}\left[\EE(1+|X^{m,\varepsilon}_s|^p+|Y^{m,\varepsilon}_s|^p)\right]^{1/p}ds\right]^p\nonumber\\
&&+C_p\left[\int^t_0 (t-s)^{-\eta}\EE(1+|X^{m,\varepsilon}_s|^2)ds\right]^{p/2}\nonumber\\
\leq\!\!\!\!\!\!\!\!&&C_T t^{-\frac{p\eta}{2}}(1+|x|^p+|y|^p).\label{X_theta}
\end{eqnarray}

Note that
$$
X^{m,\varepsilon}_t=e^{(t-s)A}X^{m,\varepsilon}_s+\int^t_s e^{(t-r)A}F^m_1(X^{m,\varepsilon}_r, Y^{m,\varepsilon}_r)dr+\int^t_s e^{(t-r)A}G^m_1(X^{m,\varepsilon}_r)d\bar {W}^{1,m}_r
$$
It follows from \eref{P4} that for any $0<s\leq t\leq T$,
\begin{eqnarray*}
\EE|X^{m,\varepsilon}_t-X^{m,\varepsilon}_s|^p\leq\!\!\!\!\!\!\!\!&& C_p(t-s)^{\frac{p\eta}{2}}\EE\|X^{m,\varepsilon}_s\|^p_{\eta}+C_p\left(\int^t_s \left[\EE|F^m_1(X^{m,\varepsilon}_r, Y^{m,\varepsilon}_r)|^p\right]^{1/p}dr\right)^p\\
&&+C_p\left[\int^t_s \EE(1+|X^{m,\varepsilon}_r|^2)dr\right]^{p/2}\\
\leq\!\!\!\!\!\!\!\!&&C_{p,T}(t-s)^{\frac{p\eta}{2}}s^{-\frac{p\eta}{2}}(1+|x|^p+|y|^p)+C(t-s)^p(1+|x|^p+|y|^p)\\
&&+C(t-s)^{p/2}(1+|x|^p+|y|^p)\\
\leq\!\!\!\!\!\!\!\!&&C_{p,T}(t-s)^{\frac{p\eta}{2}}s^{-\frac{p\eta}{2}}(1+|x|^p+|y|^p).
\end{eqnarray*}

Similarly, by  \eref{P3} and Minkowski's inequality, we get for any $\eta\in (0,1)$ and $0<t\leq T$,
\begin{eqnarray}
\EE\|Y^{m,\varepsilon}_t\|^p_{\eta}
\leq\!\!\!\!\!\!\!\!&&C_p\|e^{tA/\vare}y\|^p_{\eta}+C_p\EE\left(\frac{1}{\vare}\int^t_0\|e^{\frac{(t-s)A}{\vare}}F^m_2(X^{m,\vare}_s,Y^{m,\vare}_s)\|_{\eta}ds\right)^p\nonumber\\
&&+C_p\EE\left\|\frac{1}{\sqrt{\vare}}\int^t_0 e^{\frac{(t-s)A}{\vare}}G^m_2(X^{m,\vare}_s,Y^{m,\varepsilon}_s)d\bar W^{2,m}_s\right\|^p_{\eta}\nonumber\\
\leq\!\!\!\!\!\!\!\!&&C_p\left(\frac{t}{\vare}\right)^{-\frac{p\eta}{2}}|y|^p+C\left[\frac{1}{\vare}\int^t_0\left(\frac{t-s}{\vare}\right)^{-\frac{\eta}{2}}e^{-\frac{(t-s)\lambda_1}{2\vare}}\left[\EE(1+|X^{m,\varepsilon}_s|^p+|Y^{m,\varepsilon}_s|^p)\right]^{1/p}ds\right]^p\nonumber\\
&&+C_p\left[\int^t_0 \frac{1}{\vare}\left(\frac{t-s}{\vare}\right)^{-\eta}e^{-\frac{(t-s)\lambda_1}{\vare}}\EE(1+|X^{m,\varepsilon}_s|^2)ds\right]^{p/2}\nonumber\\
\leq\!\!\!\!\!\!\!\!&&C_T t^{-\frac{p\eta}{2}}(1+|x|^p+|y|^p).\label{Y_theta}
\end{eqnarray}

Note that
\begin{eqnarray*}
Y^{m,\varepsilon}_t=\!\!\!\!\!\!\!\!&&e^{\frac{(t-s)A}{\vare}}Y^{m,\varepsilon}_s+\frac{1}{\vare}\int^t_s e^{\frac{(t-r)A}{\vare}}F^m_2(X^{m,\varepsilon}_r, Y^{m,\varepsilon}_r)dr\\
&&+\frac{1}{\sqrt{\vare}}\int^t_s e^{\frac{(t-r)A}{\vare}}G^m_2(X^{m,\varepsilon}_r, Y^{m,\varepsilon}_r)d\bar {W}^{2,m}_r.
\end{eqnarray*}
Thus by \eref{P4}, it follows that for any $0<s<t\leq T$,
\begin{eqnarray*}
\EE|Y^{m,\varepsilon}_t-Y^{m,\varepsilon}_s|^p\leq\!\!\!\!\!\!\!\!&&C_p\EE|e^{\frac{(t-s)A}{\vare}}Y^{m,\varepsilon}_s-Y^{m,\varepsilon}_s|^p +C_p\EE\left[\frac{1}{\vare}\int^t_s \left|e^{\frac{(t-r)A}{\vare}}F^m_2(X^{m,\varepsilon}_r, Y^{m,\varepsilon}_r)\right|dr\right]^p\\
&&+C_p\EE\left|\frac{1}{\sqrt{\vare}}\int^t_s e^{\frac{(t-r)A}{\vare}}G^m_2(X^{m,\varepsilon}_r, Y^{m,\varepsilon}_r)d\bar {W}^{2,m}_r\right|^p\\
\leq\!\!\!\!\!\!\!\!&&C_p\left(\frac{t-s}{\vare}\right)^{\frac{p\eta}{2}}\EE\|Y^{m,\varepsilon}_s\|^p_{\eta}+C_p\left[\frac{1}{\vare}\int^t_s e^{-\frac{(t-r)\lambda_1}{\vare}}
\left[\EE(1+|X^{m,\varepsilon}_r|^p+|Y^{m,\varepsilon}_r|^p)\right]^{1/p}dr\right]^p\nonumber\\
&&+C_p\left[\int^t_s \frac{1}{\vare}e^{-\frac{2(t-r)\lambda_1}{\vare}}\EE(1+|X^{m,\varepsilon}_r|^2)dr\right]^{p/2}\nonumber\\
\leq\!\!\!\!\!\!\!\!&&C_{p,T}\left(\frac{t-s}{\vare}\right)^{\frac{p\eta}{2}} s^{-\frac{p\eta}{2}}(1+|x|^p+|y|^p)+C_{p,T}\left[1-e^{-\frac{(t-s)\lambda_1}{\vare}}\right]^p(1+|x|^p+|y|^p)\\
&&+C_{p,T}\left[1-e^{-\frac{2(t-s)\lambda_1}{\vare}}\right]^{p/2}(1+|x|^p+|y|^p)\\
\leq\!\!\!\!\!\!\!\!&&C_{p,T}\left(\frac{t-s}{\vare}\right)^{\frac{p\eta}{2}}s^{-\frac{p\eta}{2}}(1+|x|^p+|y|^p).
\end{eqnarray*}
The proof is complete.
\end{proof}

\begin{lemma}\label{L6.4}
For any $x\in H^{\eta}, y\in H$ with $\eta\in(0,1)$,  $0<t\leq T$ and $p\geq 2$, there exists a constant $C_{T}$ such that for any $\vare\in (0,1]$,
\begin{align}
\left[\mathbb{E}\|X^{m,\vare}_t\|^p_2\right]^{\frac{1}{p}}
\leq C_T t^{-1+\frac{\eta}{2}}\vare^{-\frac{\eta}{2}}(1+\|x\|_{\eta}+|y|). \label{Xvare2}
\end{align}
\end{lemma}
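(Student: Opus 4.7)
The plan is to apply $(-A)$ to the mild formulation of $X^{m,\vare}_t$ and estimate each of the three resulting terms separately; the powers $t^{-1+\eta/2}$ and $\vare^{-\eta/2}$ come from the initial data piece and from the time H\"older increments of $Y^{m,\vare}$ respectively, while everything else is bounded by $C_T(1+\|x\|_\eta+|y|)$, which is dominated by the announced quantity since $t\leq T$.

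\textbf{Step 1 (Initial data and stochastic convolution).} The smoothing property \eqref{P3} gives $\|e^{tA}x^m\|_2\leq C\,t^{-1+\eta/2}\|x\|_\eta$ directly. For the stochastic convolution, I would use the Burkholder--Davis--Gundy inequality combined with the splitting $(-A)=(-A)^{\gamma/2}(-A)^{1-\gamma/2}$, so that \eqref{P3} yields $|(-A)e^{(t-s)A}G^m_1(X^{m,\vare}_s)|_{HS}\leq C(t-s)^{-\gamma/2}|(-A)^{1-\gamma/2}G^m_1(X^{m,\vare}_s)|_{HS}$. The condition \eqref{BOG1} then controls the last factor by $C(1+|X^{m,\vare}_s|)$, and $\gamma\in(0,1)$ makes $(t-s)^{-\gamma}$ integrable, so combining with \eqref{Xvare} and Minkowski's integral inequality I obtain a bound of the form $C_T(1+\|x\|_\eta+|y|)$.

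\textbf{Step 2 (Drift term, the main obstacle).} The remaining term $\int_0^t e^{(t-s)A}F^m_1(X^{m,\vare}_s,Y^{m,\vare}_s)\,ds$ is the delicate one, because na\"ively applying $(-A)$ produces a non-integrable factor $(t-s)^{-1}$. To overcome this I would use the classical add-and-subtract trick, writing
\begin{align*}
(-A)\!\int_0^t \!e^{(t-s)A}F^m_1(X^{m,\vare}_s,Y^{m,\vare}_s)\,ds
=\!\int_0^t\!(-A)e^{(t-s)A}\bigl[F^m_1(X^{m,\vare}_s,Y^{m,\vare}_s)-F^m_1(X^{m,\vare}_t,Y^{m,\vare}_t)\bigr]ds
+(I-e^{tA})F^m_1(X^{m,\vare}_t,Y^{m,\vare}_t).
\end{align*}
The boundary term is $L^p$-bounded by $C(1+\|x\|_\eta+|y|)$ via the Lipschitz property of $F^m_1$ together with \eqref{Xvare}--\eqref{Yvare}. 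For the main integral, the Lipschitz continuity of $F^m_1$ reduces the integrand to $|X^{m,\vare}_s-X^{m,\vare}_t|+|Y^{m,\vare}_s-Y^{m,\vare}_t|$, and here the time H\"older estimates \eqref{THXvare}--\eqref{THYvare} produce the bound $C_T(t-s)^{\eta/2}s^{-\eta/2}\vare^{-\eta/2}(1+|x|+|y|)$ for $\vare\in(0,1]$. Minkowski in time then leaves me with the Beta integral $\int_0^t(t-s)^{-1+\eta/2}s^{-\eta/2}\,ds=B(\eta/2,1-\eta/2)$, which is a finite constant independent of $t$.

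\textbf{Step 3 (Assembling).} Adding the three pieces and using $t\leq T$ to absorb the bounded contributions into $C_T\,t^{-1+\eta/2}\vare^{-\eta/2}$ (so that the initial data power dominates), I obtain the desired inequality. The main obstacle is clearly Step 2: the non-integrable singularity of $(-A)e^{(t-s)A}$ forces the subtraction trick, and it is precisely the H\"older estimate \eqref{THYvare} for the fast component, with its factor $\vare^{-\eta/2}$, that both makes the argument go through and accounts for the $\vare^{-\eta/2}$ blow-up appearing in the conclusion.
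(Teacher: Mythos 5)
Your proposal is correct and follows essentially the same route as the paper: the same decomposition of the mild solution (smoothing of $e^{tA}$ on the initial datum, the add-and-subtract trick $F^m_1(X^{m,\vare}_s,Y^{m,\vare}_s)=F^m_1(X^{m,\vare}_t,Y^{m,\vare}_t)+[F^m_1(X^{m,\vare}_s,Y^{m,\vare}_s)-F^m_1(X^{m,\vare}_t,Y^{m,\vare}_t)]$ for the drift, the time H\"older estimates \eqref{THXvare}--\eqref{THYvare} to tame the $(t-s)^{-1}$ singularity and to produce the $\vare^{-\eta/2}$ factor, and \eqref{BOG1} with the splitting $(-A)=(-A)^{\gamma/2}(-A)^{1-\gamma/2}$ for the stochastic convolution). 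No gaps.
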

\begin{proof}
Note that for  any $t>0$, we have
\begin{eqnarray*}
X^{m,\vare}_t=\!\!\!\!\!\!\!\!&&e^{tA}x+\int_{0}^{t}e^{(t-s)A}F^m_1(X^{m,\vare}_{t},Y^{m,\vare}_{t})ds+\int_{0}^{t}e^{(t-s)A}\left[F^m_1(X^{m,\vare}_s,Y^{m,\vare}_{s})-F^m_1(X^{m,\vare}_{t},Y^{m,\vare}_{t})\right]ds  \nonumber\\
\!\!\!\!\!\!\!\!&&+\int_{0}^{t}e^{(t-s)A}G^m_1(X^{m,\vare}_s,Y^{m,\vare}_{s})d\bar{W}^{1,m}_s:=\sum^4_{i=1}I_i.
\end{eqnarray*}

For the term $I_{1}$, using \eqref{P3}, for any $\eta\in (0,1)$ we have
\begin{align} \label{ABarX1}
\|e^{tA}x\|_2
\leq C t^{-1+\frac{\eta}{2}}\|x\|_{\eta}.
\end{align}

For the term $I_{2}$, we have
\begin{eqnarray*} \label{ABarX4}
\left[\mathbb{E}\|I_{2}\|^p_2\right]^{1/p}=\!\!\!\!\!\!\!\!&&\left(\mathbb{E}\left|(e^{tA}-I)F^m_1(X^{m,\vare}_{t},Y^{m,\vare}_{t})\right|^p\right)^{1/p}\\
\leq\!\!\!\!\!\!\!\!&&C\left[1+\left(\mathbb{E}|X^{m,\vare}_{t}|^p\right)^{1/p}+\left(\mathbb{E}|Y^{m,\vare}_{t}|^p\right)^{1/p}\right]\\
\leq\!\!\!\!\!\!\!\!&&C_T(1+|x|+|y| ).
\end{eqnarray*}

For the term $I_{3}$, using Minkowski's inequality and Lemma \ref{L6.3}, we obtain
\begin{eqnarray} \label{ABarX5}
\left[\mathbb{E} \| I_{3}\|^p_2\right]^{1/p}
\leq\!\!\!\!\!\!\!\!&&C\int_{0}^{t}\frac{1}{t-s}\left[\mathbb{E}\left|F^m_1(X^{m,\vare}_s,Y^{m,\vare}_{s})-F^m_1(X^{m,\vare}_{t},Y^{m,\vare}_{t})\right|^p\right]^{\frac{1}{p}}ds \nonumber\\
\leq\!\!\!\!\!\!\!\!&&
C\int_{0}^{t}\frac{1}{t-s}\left[\mathbb{E}\left|X^{m,\vare}_s-X^{m,\vare}_{t}\right|^p+\EE\left|Y^{m,\vare}_{s}-Y^{m,\vare}_{t}\right|^p\right]^{\frac{1}{p}}ds \nonumber\\
\leq\!\!\!\!\!\!\!\!&&C(1+|x|+|y|)\int_{0}^{t}\frac{1}{t-s}(t-s)^{\frac{\eta}{2}}s^{-\frac{\eta}{2}}ds\nonumber\\
&&+C(1+|x|+|y|)\int_{0}^{t}\frac{1}{t-s}\left(\frac{t-s}{\vare}\right)^{\frac{\eta}{2}}s^{-\frac{\eta}{2}}ds\nonumber\\
\leq\!\!\!\!\!\!\!\!&&C\vare^{-\eta/2}(1+|x|+|y|) .
\end{eqnarray}

For the term $I_{4}$, by condition \ref{BOG1}, we easily have
\begin{eqnarray}
\left[\mathbb{E} \| I_{4}\|^p_2\right]^{1/p}\leq\!\!\!\!\!\!\!\!&&C\left[\EE\left(\int^t_0|(-A)e^{(t-s)A}G^m_1(X^{m,\vare}_s)|^2_{HS}ds\right)^{p/2}\right]^{1/p}\nonumber\\
\leq\!\!\!\!\!\!\!\!&&C\left[\int^t_0 (t-s)^{-\gamma}\left(\EE|(-A)^{1-\gamma/2}G^m_1(X^{m,\vare}_s)|^p_{HS}\right)^{2/p}ds\right]^{1/2}\nonumber\\
\leq\!\!\!\!\!\!\!\!&&C\left[\int^t_0 (t-s)^{-\gamma}(1+\EE|X^{m,\vare}_s|^p)^{2/p}\right]^{1/2}\nonumber\\
\leq\!\!\!\!\!\!\!\!&&C(1+|x|+|y|).\label{ABarX6}
\end{eqnarray}
Combining \eref{ABarX1}-\eref{ABarX6} yields the desired result.
\end{proof}
\begin{remark}\label{Re6.5}
If condition \eref{BF} holds, the result \eref{Xvare2} can be replaced by the following:
\begin{align}
\left[\mathbb{E}\|X^{m,\vare}_t\|^p_2\right]^{\frac{1}{p}}
\leq C_T\left( t^{-1+\frac{\eta}{2}}+t^{\frac{\alpha-\beta}{2}}\right)(1+\|x\|_{\eta}+|y|). \label{Xvare2A}
\end{align}
In fact, note that by \eref{X_theta} and \eref{Y_theta}, we have
\begin{eqnarray*}
\left[\EE\left\|\int_{0}^{t}e^{(t-s)A}F(X^{m,\vare}_s,Y^{m,\vare}_{s})ds\right\|^p_2\right]^{1/p}
\leq \!\!\!\!\!\!\!\!&&C\int_{0}^{t}\left[\EE\left\|e^{(t-s)A}F(X^{m,\vare}_s,Y^{m,\vare}_{s})\right\|^p_2\right]^{1/p}ds\nonumber\\
\leq \!\!\!\!\!\!\!\!&&C\int_{0}^{t}(t-s)^{-1+\alpha/2}\left[\EE \left\|F(X^{m,\vare}_s,Y^{m,\vare}_{s})\right\|^p_{\alpha}\right]^{1/p}ds\nonumber\\
\leq \!\!\!\!\!\!\!\!&&C\int_{0}^{t}(t-s)^{-1+\alpha/2}\left[\EE (1+\|X^{m,\vare}_s\|^p_{\beta}+\|Y^{m,\vare}_{s}\|^p_{\beta})\right]^{1/p}ds\nonumber\\
\leq \!\!\!\!\!\!\!\!&&C(1+|x|+|y|)\int_{0}^{t}(t-s)^{-1+\alpha/2} s^{-\beta/2}ds\nonumber\\
\leq \!\!\!\!\!\!\!\!&&C(1+|x|+|y|)t^{\frac{\alpha-\beta}{2}}.
\end{eqnarray*}
Then by a similar argument in Lemma \ref{L6.4}, it is easy to see \eref{Xvare2A} holds.
\end{remark}

\vspace{0.3cm}
Recall the approximate equation \eref{Ga 1.3} to the averaged equation  \eref{1.3}. Note that $\bar{X}^m$ is not the Galerkin approximation of $\bar{X}$, hence we have to check its approximation carefully.
\begin{lemma} \label{GA2}
For any $x\in H$, $T>0$ and $p\geq1$, it holds that
\begin{align}
\lim_{m\rightarrow \infty}\EE\left(\sup_{t\in[0,T]}|\bar{X}^{m}_t-\bar{X}_t|^p\right)=0. \label{FA2}
\end{align}
\end{lemma}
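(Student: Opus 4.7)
The plan is to adapt the Galerkin-approximation argument used to prove \eref{FA1} in Lemma \ref{GA1}, replacing the original coefficients $F^m_1, F_1$ by the averaged coefficients $\bar{F}^m_1, \bar{F}_1$. Writing both $\bar{X}^m_t$ and $\bar{X}_t$ in mild form and subtracting yields
\begin{align*}
\bar{X}^m_t - \bar{X}_t ={}& e^{tA}(x^m-x) + \int_0^t e^{(t-s)A}\bigl[\bar{F}^m_1(\bar{X}^m_s)-\bar{F}^m_1(\bar{X}_s)\bigr]ds \\
&+ \int_0^t e^{(t-s)A}\bigl[\bar{F}^m_1(\bar{X}_s)-\bar{F}_1(\bar{X}_s)\bigr]ds + \int_0^t e^{(t-s)A}\bigl[G^m_1(\bar{X}^m_s)-G^m_1(\bar{X}_s)\bigr]d\bar{W}^{1,m}_s \\
&+ \int_0^t e^{(t-s)A}G^m_1(\bar{X}_s)d\bar{W}^{1,m}_s - \int_0^t e^{(t-s)A}G_1(\bar{X}_s)dW^1_s.
\end{align*}
I would then apply the Burkholder-Davis-Gundy inequality, the uniform-in-$m$ Lipschitz continuity $|\bar{F}^m_1(x_1)-\bar{F}^m_1(x_2)|\leq C|x_1-x_2|$ from \eref{BL}, and the Lipschitz continuity of $G_1$ from \ref{A1}, followed by Gronwall's inequality, to reduce the claim to showing that three residual terms vanish as $m\to\infty$.

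The first residual $|x^m-x|^p$ tends to $0$ since $x^m=\pi_m x\to x$ in $H$. The second is the standard stochastic-integral approximation
\begin{align*}
\EE\sup_{t\in[0,T]}\Bigl|\int_0^t e^{(t-s)A}G^m_1(\bar{X}_s)d\bar{W}^{1,m}_s - \int_0^t e^{(t-s)A}G_1(\bar{X}_s)dW^1_s\Bigr|^p,
\end{align*}
which vanishes by the same BDG plus dominated-convergence argument used at the end of the proof of \eref{FA1}, combined with condition \eref{BOG1} and the moment bound \eref{Control X}.

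The main obstacle is the drift-averaging residual $\int_0^T\EE|\bar{F}^m_1(\bar{X}_s)-\bar{F}_1(\bar{X}_s)|^p\,ds$, since $\bar{X}^m$ is not literally the Galerkin projection of $\bar{X}$ and the two averaged coefficients are defined through different invariant measures living on different spaces. I plan to dispatch it in two steps. First, I would establish the pointwise convergence $\bar{F}^m_1(x)\to \bar{F}_1(x)$ for each $x\in H$: using the representations $\bar{F}_1(x)=\lim_{t\to\infty}\EE F_1(x,Y^{x,0}_t)$ and $\bar{F}^m_1(x)=\lim_{t\to\infty}\EE F^m_1(x,Y^{x,0,m}_t)$ together with the uniform-in-$m$ exponential ergodicity from Proposition \ref{ergodicity in finite}, it suffices to prove $\EE|F^m_1(x,Y^{x,0,m}_t)-F_1(x,Y^{x,0}_t)|\to 0$ for each fixed $t>0$; this would follow from the Lipschitz continuity of $F_1$, the convergence $\pi_m F_1\to F_1$, and the $L^2$-approximation $Y^{x,0,m}_t\to Y^{x,0}_t$ (obtained by a BDG/Gronwall scheme for the frozen equation entirely parallel to the one used for \eref{FA1}). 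Second, I would combine this pointwise limit with the uniform linear growth $\sup_m|\bar{F}^m_1(x)|\leq C(1+|x|)$ (inherited from the Lipschitz property in \ref{A1} and the moment estimate \eref{E3.20} on $\mu^{x,m}$) and the moment bound on $\bar{X}_s$ (available since $\bar{F}_1$ is Lipschitz by Remark \ref{R4.2}), so that the dominated convergence theorem delivers the required limit and completes the proof.
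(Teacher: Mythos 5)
Your proposal is correct and follows essentially the same route as the paper's proof: a mild-form decomposition plus Gronwall using the uniform-in-$m$ Lipschitz bound \eref{BL}, with the key step being the pointwise convergence $\bar{F}^m_1(x)\to\bar{F}_1(x)$ obtained exactly as in the paper — an $L^2$ Gronwall estimate for $Y^{x,0,m}_t-Y^{x,0}_t$ combined with the uniform exponential ergodicity of Proposition \ref{ergodicity in finite} (letting $m\to\infty$ first, then $t\to\infty$) — followed by dominated convergence. The only cosmetic difference is that you merge the paper's two drift residuals $(\pi_m-I)\bar{F}_1(\bar{X}_s)$ and $\bar{F}^m_1(\bar{X}_s)-\pi_m\bar{F}_1(\bar{X}_s)$ into the single term $\bar{F}^m_1(\bar{X}_s)-\bar{F}_1(\bar{X}_s)$, which is equivalent.
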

\begin{proof}
It is easy to see that for any $t\geq 0$,
\begin{eqnarray*}
\bar{X}^{m}_t-\bar{X}_t=\!\!\!\!\!\!\!\!&&e^{tA}(x^m-x)+\!\!\int^{t}_{0}\!\!e^{(t-s)A}(\pi_m-I)\bar{F}_1(\bar{X}_s)ds+\!\!\int^{t}_{0}\!\!e^{(t-s)A}\!\!\left[\bar{F}^{m}_1(\bar{X}^{m}_s)-\pi_m \bar{F}_1(\bar{X}_s)\right]ds\nonumber\\
&&+\left[\int^t_0 e^{(t-s)A}G^m_1(\bar{X}^{m}_s)d \bar W^{1,m}_s-\int^t_0 e^{(t-s)A}G^m_1(\bar{X}_s)dW^{1,m}_s\right]\\
&&+\left[\int^t_0 e^{(t-s)A}G^m_1(\bar{X}_s)d \bar W^{1,m}_s-\int^t_0 e^{(t-s)A}G_1(\bar{X}_s)dW^1_s\right].
\end{eqnarray*}
Then for any $T>0$ and $p\geq1$, we have
\begin{eqnarray*}
\EE\left(\sup_{t\in [0,T]}|\bar{X}^{m}_t-\bar{X}_t|^p\right)
\leq\!\!\!\!\!\!\!\!&&C_p|x^m-x|^p+C_{p,T}\int^{T}_{0}\EE|(\pi_m-I)\bar{F}_1(\bar{X}_s)|^pds\\
&&\!\!\!\!\!\!\!\!\!\!\!\!\!\!\!\!\!\!\!\!\!\!\!\!+C_{p,T}\int^T_0 \EE|\bar{X}^{m}_t-\bar{X}_t|^pdt+C_{p,T}\EE\int^{T}_{0}|\bar{F}^{m}_1(\bar{X}_s)-\pi_m \bar{F}_1(\bar{X}_s)|^pds\\
&&\!\!\!\!\!\!\!\!\!\!\!\!\!\!\!\!\!\!\!\!\!\!\!\!+C_p\EE\left(\sup_{t\in [0,T]}\left|\int^t_0 e^{(t-s)A}G^m_1(\bar{X}_s)d \bar W^{1,m}_s-\int^t_0 e^{(t-s)A}G_1(\bar{X}_s)dW^1_s\right|^p\right).
\end{eqnarray*}
By Gronwall's inequality, we get
\begin{eqnarray*}
\EE\left(\sup_{t\in [0,T]}|\bar{X}^{m}_t-\bar{X}_t|^p\right)\leq\!\!\!\!\!\!\!\!&&C_{p,T}|x^m-x|^p+C_{p,T}\int^{T}_{0}\EE|(\pi_m-I)\bar{F}_1(\bar{X}_s)|^pds\\
&&\!\!\!\!\!\!\!\!\!\!\!\!\!\!\!\!\!\!\!\!\!\!\!\!+C_{p,T}\EE\int^{T}_{0}|\bar{F}^{m}_1(\bar{X}_s)-\pi_m \bar{F}_1(\bar{X}_s)|^p ds\nonumber\\
&&\!\!\!\!\!\!\!\!\!\!\!\!\!\!\!\!\!\!\!\!\!\!\!\!+C_{p,T}\EE\left(\sup_{t\in [0,T]}\left|\int^t_0 e^{(t-s)A}G^m_1(\bar{X}_s)d \bar W^{1,m}_s-\int^t_0 e^{(t-s)A}G_1(\bar{X}_s)dW^1_s\right|^p\right).
\end{eqnarray*}
By the a priori estimate of $\bar{X}_t$ and the dominated convergence theorem,
\begin{eqnarray}
&&\lim_{m\rightarrow \infty}\int^{T}_{0}\EE|(\pi_m-I)\bar{F}_1(\bar{X}_s)|^pds=0,\label{AT2.1}\\
&&\lim_{m\rightarrow \infty}\EE\left(\sup_{t\in [0,T]}\left|\int^t_0 e^{(t-s)A}G^m_1(\bar{X}_s)d \bar W^{1,m}_s-\int^t_0 e^{(t-s)A}G_1(\bar{X}_s)dW^1_s\right|^p\right)=0.\label{AT2.2}
\end{eqnarray}
Thus if we can prove for any $x\in H$,
\begin{eqnarray}
\lim_{m\rightarrow \infty}|\bar{F}^{m}_1(x)-\pi_m \bar{F}_1(x)|=0.\label{AT2.3}
\end{eqnarray}
Then by dominated convergence theorem, we get
\begin{eqnarray}
\lim_{m\rightarrow \infty}\EE\int^{T}_{0}|\bar{F}^{m}_1(\bar{X}_s)-\pi_m \bar{F}_1(\bar{X}_s)|^p ds=0.\label{AT2.4}
\end{eqnarray}
Hence, \eref{FA2} holds by combining \eref{AT2.1}, \eref{AT2.2} and \eref{AT2.4}.

Now we prove \eqref{AT2.3}. For any $t>0$, we have
\begin{eqnarray*}
\EE|Y^{x,0,m}_t-Y^{x,0}_t|^2\leq\!\!\!\!\!\!\!\!&&C\int^{t}_{0}\!\!\!\EE|(\pi_m-I)F_2(x, Y^{x,0}_s)|^2ds\!\!+\!C\int^{t}_{0}\!\!\!\EE|F^{m}_2(x,Y^{x,0,m}_s)-F^m_2(x,Y^{x,0}_s)|^2ds \nonumber\\
&&+C\EE\left|\int^t_0 e^{(t-s)A}G^m_2(x,Y^{x,0,m}_s)d \bar W^{2,m}_s-\int^t_0 e^{(t-s)A}G^m_2(x,Y^{x,0}_s)d\bar{W}^{2,m}_s\right|^2\nonumber\\
&&+C\EE\left|\int^t_0 e^{(t-s)A}G^m_2(x,Y^{x,0}_s)d \bar W^{2,m}_s-\int^t_0 e^{(t-s)A}G_2(x,Y^{x,0}_s)dW^2_s\right|^2\nonumber\\
\leq\!\!\!\!\!\!\!\!&&C\int^{t}_{0}\EE|(\pi_m-I)F_2(x, Y^{x,0}_s)|^2ds+C\EE\int^{t}_{0}|Y^{x,0,m}_s-Y^{x,0}_s|^2ds \nonumber\\
&&+C\EE\left|\int^t_0 e^{(t-s)A}G^m_2(x,Y^{x,0}_s)d \bar W^{2,m}_s-\int^t_0 e^{(t-s)A}G_2(x,Y^{x,0}_s)dW^2_s\right|^2.\nonumber\\
\end{eqnarray*}
By Gronwall's inequality, it follows
\begin{eqnarray*}
\EE|Y^{x,0,m}_t-Y^{x,0}_t|^2
\leq\!\!\!\!\!\!\!\!&&Ce^{Ct}\int^{t}_{0}\EE|(\pi_m-I)F_2(x, Y^{x,0}_s)|^2ds\\
&&+Ce^{Ct}\EE\left|\int^t_0 e^{(t-s)A}G^m_2(x,Y^{x,0}_s)d \bar W^{2,m}_s-\int^t_0 e^{(t-s)A}G_2(x,Y^{x,0}_s)dW^2_s\right|^2.
\end{eqnarray*}
As a consequence, it is easy to see for any $t\geq 0$,
\begin{eqnarray*}
\lim_{m\rightarrow \infty}\EE|Y^{x,0,m}_t-Y^{x,0}_t|^2=0.
\end{eqnarray*}
By \eref{ergodicity1}, we have for any $t>0$,
\begin{eqnarray*}
|\bar{F}^{m}_1(x)-\pi_m \bar{F}_1(x)|\leq\!\!\!\!\!\!\!\!&&|\bar{F}^{m}_1(x)-\EE\pi_m F_1(x,Y^{x,0,m}_t)|+|\EE\pi_m F_1(x,Y^{x,0}_t)-\pi_m \bar{F}_1(x)| \nonumber\\
&&+|\EE\pi_m F_1(x,Y^{x,0,m}_t)-\EE\pi_m F_1(x,Y^{x,0}_t)|\nonumber\\
\leq\!\!\!\!\!\!\!\!&&Ce^{-\frac{\theta t}{4}}(1+|x|)+C\EE|Y^{x,0,m}_t-Y^{x,0}_t|.
\end{eqnarray*}
Then by taking $m\rightarrow \infty$ firstly, then $t\rightarrow\infty$, we arrive at \eref{AT2.3}.
The proof is complete.

\end{proof}

\textbf{Acknowledgment}. We would like to thank Professor Renming Song for useful discussion. This work is supported by the National Natural Science Foundation of China (11771187, 11931004, 12090011) and the Priority Academic Program Development of Jiangsu Higher Education Institutions.

\end{document}